\newcommand{\N}{\mathbb N}
\newcommand{\var}{\mathrm{var}}
\newcommand{\R}{\mathbb{R}}
\newcommand{\Ss}{\mathbb{S}}
\newcommand{\Q}{\mathbb{Q}}
\newcommand{\Z}{\mathbb Z}
\newcommand{\Diff}{\mathrm{Diff}}
\newcommand{\Ddz}{\mathrm{Diff}^\infty_{\Delta,0}}
\newcommand{\eps}{\varepsilon}
\theoremstyle{theorem}
\newtheorem{thm}{Theorem}[section]
\newtheorem{thmintro}{Theorem}
\newtheorem*{main}{Main Theorem}
\newtheorem{prop}[thm]{Proposition}
\newtheorem{lem}[thm]{Lemma}
\newtheorem{claim}[thm]{Claim}
\newtheorem{defn}[thm]{Definition}
\theoremstyle{remark}
\newtheorem{rem}[thm]{Remark}
\newcommand{\id}{\mathrm{id}}
\newcommand{\f}{\varphi}
\begin{document}

\date{}
\vspace{-1cm}

\date{}
\author{H\'el\`ene Eynard-Bontemps \, \& \, Emmanuel Militon}

\title{Almost reducibility, distortion and local perfection for diffeomorphisms of one-manifolds }

\maketitle

\begin{abstract}
In this article, we characterize the distortion elements of the group of smooth diffeomorphisms of the circle and of the group of compactly supported smooth diffeomorphisms of the real line. More precisely, we prove that, in this context, an element is distorted if and only if it is \emph{almost reducible}, that is if and only if 
it has conjugates arbitrarily close to an isometry. For diffeomorphisms with fixed points, we show that this is equivalent to being the time-$1$ map of a $C^1$ vector field without hyperbolic zero. The equivalence between distortion and almost reducibility relies on new more general results about distortion elements in groups of diffeomorphisms of manifolds and on a new local perfection result for the group of compactly supported smooth diffeomorphisms of the real line.
\end{abstract}

%
%
  The notion of distortion element in a group is a geometric group theoretic notion which was introduced by Gromov. 
{In a finitely generated group $G$, an element $g$ is said to be distorted if the wordlength $\ell_S(g^n)$ of the powers $g^{n}$, with respect to some finite generating set $S$ of $G$, is sublinear in $n$. Note that, for any $p,q \geq 0$, the inequality $\ell_S(g^{p+q}) \leq \ell_S(g^{p})+\ell_S(g^{q})$ holds so that the sequence $\left(\frac{\ell_S(g^{n})}{n}\right)_n$ always converges.} The notion of distorted element is independent of the chosen generating set and amounts to saying, in the geometric group theory language, that the corresponding morphism from the group $\mathbb{Z}$ to the group $G$ is not quasi-isometric. This notion is stable under group morphisms.

Distorted elements naturally appear in some Baumslag-Solitar groups and in the Heisenberg group. Recall that a solvable Baumslag-Solitar group is a group $BS(k)$, for an integer $k$, given by the presentation
$$BS(k)=<a,b \ | \ aba^{-1}=b^{k}>.$$
When $|k|\geq 2$, the element $b$ is distorted in this group: the element $b^{k^{n}}= a^{n}ba^{-n}$ has a wordlength smaller than or equal to $2n+1$ relative to the generating set $\left\{a,b\right\}$ and $\frac{2n+1}{k^n}$ tends to $0$ as $n$ tends to $+\infty$. Hence, in a more general group, an element which is conjugate to a power of itself different from $1$ or $-1$ is distorted.

The integral Heisenberg group is the group $H$ of upper triangular integral matrices whose diagonal coefficients are all equal to $1$. In this group, we let
$$a =\begin{pmatrix}
1 & 1 & 0 \\ 0 & 1 & 0 \\ 0 & 0 & 1
\end{pmatrix}, b =\begin{pmatrix}
1 & 0 & 0 \\ 0 & 1 & 1 \\ 0 & 0 & 1
\end{pmatrix}, c =\begin{pmatrix}
1 & 0 & 1 \\ 0 & 1 & 0 \\ 0 & 0 & 1
\end{pmatrix}.
$$
The elements $a$ and $b$ generate the group $H$ and commute with $c=[a,b]$. Moreover, for any integer $n$, $c^{n^2}=[a^n,b^n]=a^n b^n a^{-n}b^{-n}$ has a word-length smaller than or equal to $4n$. Hence the element $c$ is distorted in $H$.

Polterovich in \cite{Pol} and, later, Franks and Handel in \cite{FH}, had the idea to study this notion in non-finitely generated groups, namely groups of homeomorphisms or diffeomorphisms of manifolds. In a more general group, an element is said to be distorted if it lies in a finitely generated subgroup in which it is distorted. Using this notion, Franks and Handel, generalizing the earlier result by Polterovich, proved that actions of nonuniform lattices of some semisimple Lie groups had to act almost trivially by $C^{1}$ diffeomorphisms which preserve a measure with infinite support on surfaces with positive genus. In their article, Franks and Handel also propose a more systematical study of distortion elements in groups of transformations of manifolds.

Since then, various authors have studied distortion elements in groups of homeomorphisms or diffeomorphisms of manifolds. For instance, Calegari and Freedman proved in \cite{CF} that any element in the group of homeomorphisms of a sphere is distorted. Avila proved in \cite{Av08} that rotations are distorted in the group of smooth diffeomorphisms of the circle.\medskip

 In what follows, we restrict our attention to the group $G=\mathrm{Diff}^{\infty}_+(\mathbb{S}^1)$ of smooth  orientation-preserving diffeomorphisms of the circle $\mathbb{S}^1$ which we identify to $\mathbb{R}/\mathbb{Z}$  or to the group $G=\mathrm{Diff}^{\infty}_c(\mathbb{R})$ of smooth compactly supported diffeomorphisms of the real line $\mathbb{R}$, that is the group of smooth diffeomorphisms which fix all the points outside of a compact subset. Throughout the article, by ``smooth'', we will mean $C^\infty$. In this context, we manage to give a complete description of the distortion elements in the group $G$.  
 

 As a consequence of Calegari and Freedman's results and techniques, any element in the group of homeomorphisms of the circle and of the group of compactly supported homeomorphism of the real line is distorted. More elementarily, observe that any compactly supported homeomorphism of the real line is conjugated to its square, which implies that it is distorted.   However, in higher regularity, there are obstructions to being distorted.

To prove that an element $g$ in a group $G$ is undistorted, it suffices to find a length, that is a map $\ell: G \rightarrow [0,+\infty)$ with $\ell(h_1 h_2)\leq \ell(h_1)+\ell(h_2)$ for any $h_1,h_2\in G$, such that $\frac{\ell(g^{n})}{n}$ has a nonzero limit when $n \rightarrow +\infty$. Indeed, in a finitely generated group with wordlength $\ell_S$, we have that, for any length $\ell$, there exists $C>0$ such that $\ell \leq C \ell_S$, which implies the property.

In the group $G=\mathrm{Diff}^{\infty}_+(\mathbb{S}^1)$ or $\mathrm{Diff}^{\infty}_c(\mathbb{R})$, such a length is provided by the supremum of the absolute value of the logarithm of the derivative, by the chain rule. This implies that a smooth diffeomorphism $g$ with a hyperbolic fixed point, that is a point $a$ where its derivative  $Dg(a)$ is different from $1$, is undistorted, as the derivative of $g^{n}$ at $a$ is $(Dg(a))^{n}$. Hence distorted elements have no hyperbolic fixed point. As a consequence, distorted elements have no hyperbolic periodic orbit, as such elements have a power with a hyperbolic fixed point.

There is yet a more subtle obstruction to being distorted for smooth diffeomorphisms, called the asymptotic variation. This obstruction was introduced by Navas in \cite{Na23} and used in \cite{Na21} to provide an example of a $C^{2}$-diffeomorphism of $[0,1]$ which is distorted in the group of $C^1$ diffeomorphisms but undistorted in the group of $C^2$ diffeomorphisms. For a map $\varphi:\mathbb{R} \rightarrow \mathbb{R}$, we define its total variation
$$\mathrm{Var}(\varphi)=\sup \left\{ \sum_{i=0}^{\ell-1} |\varphi(x_{i+1})-\varphi(x_i)| \ | \ x_0<x_1<\ldots<x_{\ell} \right\} \in [0,+\infty].$$
If the map $\varphi$ has a compact support and is $C^1$, it is known that this quantity is finite and equal to $\int_{\mathbb{R}} |D\varphi|(x)dx$. We can define a similar quantity $\mathrm{Var}(\varphi)$ for maps $\varphi: \mathbb{S}^1 \rightarrow \mathbb{R}$. 
The map which, to any element $f$ in our group $G$, associates $\mathrm{Var}\left( \log(Df) \right)$ is a length function on $G$, by the chain rule.

\noindent Hence for any element $f\in G$, one can define its \textit{asymptotic variation} by
$$\mathrm{V}_{\infty}(f)=\lim_{n \rightarrow +\infty} \frac{\mathrm{Var}\left( \log(Df^n) \right)}{n}\quad\in\R_+.$$
Note that the vanishing of this quantity is well understood: for $C^2$ circle diffeomorphisms with irrational rotation number, it is automatic according to \cite{Na23}, while for $C^2$ diffeomorphisms with fixed points, the first author and Navas proved in \cite{EN21} that it is equivalent to being the time-$1$ map of the flow of a $C^1$ vector field without hyperbolic zero.   

 According to the general discussion above, the nonvanishing of the asymptotic variation is an obstruction to distortion. It also provides a natural obstruction to another property we investigate in this article, often called \emph{almost reducibility}.


\begin{defn} 
\label{d:reduc}
An element $f\in G$
is said to be \emph{almost reducible} if it can be brought arbitrarily close to an isometry by conjugacy in $G$ by elements with a common compact support.
\end{defn}

Observe that, in the case $G=\mathrm{Diff}^{\infty}_c(\mathbb{R})$, this amounts to saying that the element can be brought arbitrarily close to the identity by conjugation by elements with a common compact support.

Note that $\mathrm{Var}\left( \log(Dg_n) \right)$ goes to $0$ for any sequence $(g_n)$ of elements of $G$ with a common compact support which tends to an isometry. Furthermore, one easily sees that $\mathrm{V}_{\infty}$ is an invariant of smooth conjugacy and that $\mathrm{V}_{\infty}(g)\le \mathrm{Var}\left( \log(Dg) \right)$ for any $g\in G$, so in fact 
$$\forall f\in G,\quad\mathrm{V}_{\infty}(f)\le \inf_{h\in G} \mathrm{Var}\left( \log(D(hfh^{-1}) \right).$$
This shows that the asymptotic variation of a quasi reducible diffeomorphism must vanish. 

Our main theorem claims that the asymptotic variation is actually the only obstruction both to quasi reducibility and to distortion, for elements with a periodic orbit (for the general case, see Remark \ref{r:AK}).

\begin{main} \label{t:completedescriptiondistortion}
Let $g$ be an element of $G=\mathrm{Diff}^{\infty}_c(\mathbb{R})$ or $\mathrm{Diff}^{\infty}_+(\mathbb{S}^1)$ with at least one periodic orbit. The following statements are equivalent.
\begin{enumerate}
\item The element $g$ is distorted in $G$.
\item The asymptotic variation of $g$ vanishes.
\item The element $g$ is almost reducible in $G$.
\end{enumerate}
Furthermore, the implications $1. \Rightarrow 2.$ and $3. \Rightarrow 1.$ hold without the periodic orbit hypothesis.
\end{main}

We explained above why the implications $1. \Rightarrow 2.$ and $3. \Rightarrow 2.$ hold.



When $G=\mathrm{Diff}^{\infty}_c(\mathbb{R})$, as any element of this group has a fixed point, we obtain a complete characterization of the distorted elements in the group $G$.

As a diffeomorphism of the circle with rational rotation number has a periodic orbit, we deduce immediately from the theorem that any smooth diffeomorphism of the circle with rational rotation number is distorted if and only if its asymptotic variation vanishes if and only if it is almost reducible.

\begin{rem}
\label{r:AK}
By a still unwritten claim by Avila and Krikorian, any smooth diffeomorphism of the circle with irrational rotation number is almost reducible. Using this claim, we obtain that any smooth diffeomorphism of the circle with irrational rotation number, and thus vanishing asymptotic variation by \cite{Na21}, is distorted. Hence, taking this claim for granted,
The Main Theorem 
still holds without the periodic point hypothesis and we obtain a complete characterization of the distorted elements in the group $\mathrm{Diff}^{\infty}_+(\mathbb{S}^1)$.
\end{rem}

The $C^\infty$ and $C^0$ settings are the only ones where such a complete characterization is known. In Remark \ref{r:proj-dist} below, we discuss the case of intermediate regularity. 

%


The main result above is the consequence of two theorems that we introduce now. More precisely, the first one contains the implication $2. \Rightarrow 3.$, and the second one the implication $3. \Rightarrow 1.$.


\begin{thmintro}
\label{t:reduc2}
Any element of $\Diff^\infty_+(\Ss^1)$, $\Diff^\infty_+([0,1])$ or $\Diff^\infty_c(\R)$ with at least one periodic orbit
and with vanishing asymptotic variation is almost reducible. 
\end{thmintro}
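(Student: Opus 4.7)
The plan is to build, for every $\eps>0$, a diffeomorphism $h\in G$ (with support in a fixed compact set) such that $hgh^{-1}$ is $\eps$-close to an isometry in $C^\infty$. The key input is the theorem of the first author and Navas \cite{EN21} recalled above: under our hypotheses the map $g$ (in the interval case) or some power $g^q$ fixing a period-$q$ orbit (in the circle case) is the time-one map $\phi^1_X$ of a $C^1$ vector field $X$ on its ambient manifold whose zeros (the corresponding fixed-point set) are all non-hyperbolic, i.e., satisfy $DX=0$. I would treat the three cases in parallel, reducing everything to the same flow-based construction on maximal intervals of the complement of $\Fix(g^q)$; in the circle case with rotation number $p/q\ne 0$, one has to work simultaneously on the $q$ intervals permuted by $g$ so that the final conjugate is close to the rotation $R_{p/q}$ rather than to the identity.

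On any maximal open interval $I=(a,b)$ of $[0,1]\setminus\Fix(g)$, say with $X>0$ (the analogue on $\R$ and $\Ss^1$ being identical), introduce the time coordinate $\tau_I(x)=\int_{c_I}^{x}ds/X(s)$, a $C^1$-diffeomorphism $I\to\R$ conjugating $X|_I$ to $\partial_\tau$ and $g|_I$ to the unit shift $\tau\mapsto\tau+1$. For large $N\in\mathbb{N}$, conjugation by the dilation $\tau\mapsto\tau/N$ transforms the unit shift into the shift by $1/N$, that is, into the time-$1/N$ map of $X$, which tends to the identity as $N\to\infty$. A local computation using $X(x)\sim c(x-p)^{k}$ near each endpoint $p$ of $I$ (with $k\ge 2$) shows that this dilation, read in the original coordinate, has the form $x\mapsto p+N^{1/(k-1)}(x-p)+O((x-p)^2)$, so the dilations on the various intervals glue across the fixed points into a homeomorphism $h_N$ of $[0,1]$ which is at least $C^1$ and which, by design, satisfies $h_Ngh_N^{-1}=\phi^{1/N}_X$.

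The main obstacle is to upgrade the above into a $C^\infty$ almost-reducibility statement, since $X$ is in general only $C^1$: for a smooth diffeomorphism without hyperbolic fixed points, the Szekeres flow need not be $C^2$ at non-integer times, so $h_N$ and $\phi^{1/N}_X$ need not be smooth. To handle this I would replace $h_N$ by a smooth approximation $\tilde h_N$, interpolating via a smooth cut-off between the genuine dilation in the interior of each $I$ and a smooth model (for instance the purely linear dilation $x\mapsto p+N^{1/(k-1)}(x-p)$, or the identity if $X$ is flat at $p$) on a shrinking neighborhood of each fixed point $p$ of $g$. Because $X$ vanishes to order at least $2$ at $p$, the dynamics of $g$ near $p$ is $C^\infty$-close to the identity, and a careful Taylor-expansion argument should show that $\tilde h_Ng\tilde h_N^{-1}\to\id$ in $C^\infty$. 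Carrying this through uniformly across fixed points of different vanishing orders, and simultaneously on the $q$ intervals in the circle case, is the main technical hurdle and likely requires a quantitative sharpening of \cite{EN21} controlling higher derivatives of $X$ in terms of the asymptotic variation of $g$.
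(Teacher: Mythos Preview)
Your proposal locates the right starting point — the $C^1$ generating vector field $X$ from \cite{EN21} — but has a genuine gap at the step you yourself flag. The target $\phi^{1/N}_X$ and the conjugacy $h_N$ are only $C^1$ in general, and the smoothing sketch near ITI fixed points does not go through: in the transition zone between ``identity near $p$'' and ``time-dilation away from $p$'', the $C^r$ norm of $\tilde h_N g \tilde h_N^{-1}$ involves derivatives of $X$ up to order $r$, which can be arbitrarily large since $X$ is only $C^1$ at $p$. A Taylor argument is unavailable for the same reason, and your local model $X\sim c(x-p)^k$ simply fails at ITI points. Your final sentence is an honest admission that the argument is incomplete at its hardest point.

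The paper takes a different route: rather than aiming at $\phi^{1/N}_X$, it builds a \emph{new} vector field $\tilde X$, $C^r$-small and smoothly conjugate to $X$, with $\tilde X=X$ near each ITI fixed point (so the conjugacy is the identity there and smoothness is automatic). The transition away from an ITI point is placed in a region selected via the key missing ingredient, Lemma~\ref{lem:regularitypoints} (from \cite{BE16}): although $X$ is only $C^1$ at an ITI point, there are points arbitrarily close to it where $X$ and its first $r$ derivatives are \emph{simultaneously} small. A second piece you are missing is a synchronisation condition (the quantity $\sigma$ of Lemma~\ref{l:crit} must be an integer) ensuring the global conjugacy is smooth, not just $C^1$, at the far endpoint; this forces an extra free parameter into the construction of $\tilde X$. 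Beyond this, the fixed-point set may be infinite, which the paper handles by a finite subdivision (Claim~\ref{c:subdiv}) into pieces that are either already $C^r$-close to the identity or free of interior ITI points, and the circle case with rotation number $p/q\neq0$ requires a separate normal-form reduction (Proposition~\ref{p:cases}) rather than the direct parallel treatment you suggest.
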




\begin{thmintro} \label{Thm:conjugacyanddistortion}
Let $f \in G=\mathrm{Diff}^\infty_+(\Ss^1)$ or $\Diff^\infty_c(\R)$ and $g$ be a distorted element of $G$. Suppose that there exists a sequence $(h_n)_{n \geq 0}$ consisting of diffeomorphisms in $G$ with a common compact support such that the sequence of conjugates $(h_n f h_n^{-1})_n$ converges to $g$ in the $C^{\infty}$ sense. Then $f$ is distorted in $G$.
\end{thmintro}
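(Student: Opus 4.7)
The plan is to construct a finitely generated subgroup $F \leq G$ containing $f$ together with a finite generating set in which $\ell_F(f^n) = o(n)$. Since $g$ is distorted in $G$, I would first fix a finitely generated subgroup $\Gamma_0 = \langle S \rangle \leq G$ with $g \in \Gamma_0$ and $\ell_S(g^n) = o(n)$, and then extract a subsequence of the given conjugates so that $f_n := h_n f h_n^{-1}$ converges to $g$ in $C^\infty$ as rapidly as needed. Setting $\epsilon_n := f_n g^{-1}$, this gives $\epsilon_n \to \id$ super-fast, with all the $\epsilon_n$'s supported in a common compact set (using the common-support hypothesis on the $h_n$'s, which forces $\mathrm{supp}(f_n) \subseteq K \cup \mathrm{supp}(f)$ for a fixed compact $K$).

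The crucial step is to encode the rapidly convergent sequence $(\epsilon_n)$ in a finitely generated subgroup of $G$ with uniformly bounded word length. Here I would appeal to the new local perfection result for $\Diff^\infty_c(\R)$ proved elsewhere in the paper, which provides, for diffeomorphisms sufficiently close to the identity, a decomposition as a bounded product of commutators of diffeomorphisms also close to the identity. Combined with a Calegari--Freedman style construction based on disjoint supports (using a single auxiliary diffeomorphism $T$ that translates the support of $\epsilon_0$ so as to cover those of all the $\epsilon_n$'s, suitably rescaled and arranged disjointly), this produces finitely many $u_1, \dots, u_m \in G$ such that $\epsilon_n \in H := \langle u_1, \dots, u_m \rangle$ with $\ell_H(\epsilon_n) \leq C$ uniformly in $n$. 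Applying the same technique to the sequence $(h_n)$, which also has a common compact support, yields finitely many $v_1, \dots, v_p \in G$ whose group contains all the $h_n$'s with uniformly bounded word length.

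With these ingredients in place, set $F = \langle S \cup \{u_1, \dots, u_m\} \cup \{v_1, \dots, v_p\} \rangle$, so that $f = h_0^{-1} \epsilon_0 g h_0 \in F$. To estimate $\ell_F(f^n)$, I would choose $k = k(n)$ large enough that the iterated error
\[
\delta_n \;:=\; g^{-n}(\epsilon_{k(n)} g)^{n} \;=\; \prod_{j=-n}^{-1} g^{j}\,\epsilon_{k(n)}\, g^{-j}
\]
is still super-close to $\id$ in $C^\infty$; this is possible because $\epsilon_{k(n)} \to \id$ arbitrarily fast and $g$, being distorted, has no hyperbolic fixed point so that its iterates have controlled dilation. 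Along this diagonal, the same encoding step gives $\ell_F(\delta_n) \leq C'$. From $f^n = h_{k(n)}^{-1} f_{k(n)}^{n} h_{k(n)} = h_{k(n)}^{-1} g^n \delta_n h_{k(n)}$ we then obtain
\[
\ell_F(f^n) \;\leq\; 2\,\ell_F(h_{k(n)}) + \ell_F(g^n) + \ell_F(\delta_n) \;=\; O(1) + o(n) + O(1) \;=\; o(n),
\]
as required.

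The main obstacle will be the encoding step: ensuring that the fast-convergent sequences $(\epsilon_n)$, $(\delta_n)$ and the conjugators $(h_n)$ can be simultaneously embedded in a single finitely generated subgroup of $G$ with uniformly bounded word length, while keeping the behavior of $g^n$ under control (so that the distortion witness from $\Gamma_0$ survives in $F$). This is precisely where the new local perfection result and the common-support hypothesis on the $h_n$'s play their essential roles; the rest of the argument is the organizational combination of these encodings with the distortion witness for $g$ already in hand.
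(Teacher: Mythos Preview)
There is a genuine gap in your argument, and it lies precisely in the step you identify as the ``main obstacle''. You write that ``applying the same technique to the sequence $(h_n)$, which also has a common compact support, yields finitely many $v_1,\dots,v_p\in G$ whose group contains all the $h_n$'s with uniformly bounded word length''. This is false: the encoding technique (the Calegari--Freedman/Avila trick combined with local perfection) applies only to sequences that converge \emph{sufficiently fast to the identity}. The conjugators $h_n$ are given by hypothesis; they share a common compact support but are otherwise completely arbitrary, and there is no mechanism to place infinitely many of them in a single finitely generated subgroup with bounded word length. Since your estimate $\ell_F(f^n)\le 2\ell_F(h_{k(n)})+\ell_F(g^n)+\ell_F(\delta_n)$ requires $k(n)\to\infty$ (otherwise $\delta_n$ cannot be made small for all $n$), the term $2\ell_F(h_{k(n)})$ is simply uncontrolled and the argument collapses. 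A secondary issue is that even for the sequences $(\epsilon_n)$ and $(\delta_n)$, the encoding produces word length \emph{linear} in the index, not bounded; your claim $\ell_H(\epsilon_n)\le C$ is too optimistic, and a linear bound on $\ell_F(\delta_n)$ would already kill the $o(n)$ conclusion.

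The paper's proof sidesteps the conjugator problem entirely, and it is worth seeing how. Rather than building a single finitely generated witness group, the paper introduces auxiliary quantities $A_{C,\eta}(f)$ (the minimal word length of $f$ over all generating sets of cardinality $\le C$ consisting of elements $\eta$-close to the identity) and $a_{C,\eta}(f)=\limsup_n A_{C,\eta}(f^n)/n$, and shows that distortion is equivalent to $a_{C,\eta}(f)=0$ for some $C$ and all $\eta$. The crucial point is that $a_{C,\eta}$ is \emph{conjugation invariant}: since $A_{2C,\eta}(hf^kh^{-1})\le A_{C,\eta}(f^k)+2A_{C,\eta}(h)$ and the last term is independent of $k$, dividing by $k$ and passing to the limit makes the conjugator disappear. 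One can then bound $a_{8C,\eta}(f)$ by $a_{2C,\eta}(h_m f^n h_m^{-1})/n$ for a \emph{fixed} $m$ chosen so that $h_m f^n h_m^{-1}g^{-n}$ is $\eta$-close to the identity; from there a single element close to the identity together with $A_{C,\eta}(g^n)$ suffices, and one lets $n\to\infty$. No information on the word length of the $h_m$'s is ever needed.
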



Theorem \ref{t:reduc2} is proved in \S\ref{s:reduc} and partly relies on classical works of Szekeres, Kopell and Mather \cite{Sz58,Ko,Mather} regarding conjugacy classes of interval diffeomorphisms, which are introduced in \S\ref{s:classical}. 
 A central ingredient of the proof of Theorem \ref{Thm:conjugacyanddistortion} is the \emph{local fragmented perfection} of the topological groups under scrutiny. This is the only part in the proof of Theorem \ref{Thm:conjugacyanddistortion} which is specific to the $C^{\infty}$ category, as we are unable to prove similar results for $C^{r}$-diffeomorphisms, with $r$ finite.
For $\mathrm{Diff}^\infty_+(\Ss^1)$, this property was proved by Avila \cite{Av08}. In \S\ref{s:loc-perf}, we present Avila's proof and extend it to $\Diff^\infty_c(\R)$. We then prove Theorem \ref{Thm:conjugacyanddistortion} in \S\ref{s:dist-reduc}. This strategy can actually be generalized to any manifold and \S\ref{s:extension} presents the corresponding extension.

\begin{rem}
\label{r:proj-dist}
In the $C^1$ setting, we saw, using the length function $f\mapsto \|\log Df\|_\infty$, that a distorted element cannot have hyperbolic periodic points, and one easily sees that the same holds for almost reducible diffeomorphisms. Navas proved in \cite{Na11} that conversely, the absence of hyperbolic periodic points implies almost reducibility (in the $C^1$ topology). It is unknown, however, whether it also implies distortion. We only know from \cite{EBN} that among $C^1$ diffeomorphisms without hyperbolic periodic points, those that are distorted form a dense subset.
 
Growing slightly in regularity, the natural setting to define the asymptotic variation, based on the length function $f\mapsto \var(\log Df)$, is that of $C^{1+bv}$ or $C^{1+ac}$ diffeomorphisms, i.e. $C^1$ diffeomorphisms whose derivative has bounded variation or is absolutely continuous. In this context, Navas proved in \cite{Na23} the analogue of Theorem \ref{t:reduc2}, without the periodic orbit hypothesis, namely: any diffeomorphism with vanishing asymptotic variation is almost reducible (in the natural $C^{1+bv}/C^{1+ac}$ topology). However, his proof is specific to this regularity, and ours is specific to the $C^{\infty}$ setting. To our knowledge, it is unknown whether this statement holds in intermediate regularities, starting from $C^2$. As for distortion, we only know from \cite{EBN} that among $C^2$ diffeomorphisms with vanishing asymptotic variation and with a periodic orbit, those that are distorted form a dense subset.

The above tools allowed Navas to exhibit in \cite{Na21,Na23} $C^2$ diffeomorphisms which are $C^1$ but not $C^2$-distorted (resp. almost reducible). 

At that point, if it weren't for the Main Theorem above, one could legitimately look for new length functions on groups of diffeomorphisms of higher regularity, yielding further obstructions to almost reducibility and distortion in these groups. In class $C^3$, this is closely related to the discussion on the Liouville cocycle at the end of \cite{DN}. For a $C^3$ diffeomorphism $f$ of the interval $[0,1]$, the following formula defines an $L^1$ function $c(f)$ on $[0,1]^2$ (see \cite{Na06}):
$$c(f)(x,y) = \frac{Df(x) Df(y)}{(f(x)-f(y))^2} - \frac{1}{(x-y)^2}.$$
The map $c$ satisfies a nice chain rule which implies the triangle inequality $\|c(fg)\|_{L^1}\le\|c(f)\|_{L^1}+\|c(g)\|_{L^1}$, making $L:f\mapsto \|c(f)\|_{L^1}$ a length function on $\mathrm{Diff}^3_+([0,1])$, from which one can define, as before, $L_\infty:f\mapsto \lim_{n \to \infty}L(f^n)/n$. Dinamarca and Navas asked for a characterization of those $f$ for which $L_{\infty}(f)$ vanishes. As a consequence of Theorem \ref{t:reduc2} above, we obtain that, if a $C^{\infty}$ diffeomorphism $f$ of the interval $[0,1]$ has vanishing asymptotic variation, then $L_{\infty} (f)$ also vanishes.

Indeed, just like for the asymptotic variation, using the triangle inequality, one easily sees that $L_\infty$ is a $C^3$-conjugacy invariant, and that $0\le L_\infty(g)\le L(g)$ for every $g\in \Diff^3_+([0,1])$, so in fact:
$$\forall f\in \Diff^3_+([0,1]),\quad L_\infty(f)\le \inf_{h\in \Diff^3_+([0,1])}L(hfh^{-1}).$$
Furthermore, Lemma 1.2 from \cite{Na06} shows that $L$ is continuous for the $C^3$ topology, and thus for the $C^\infty$ one. Since $L(\id)=0$, it follows directly that if $f$ is almost reducible, $L_\infty(f)=0$, and Theorem \ref{t:reduc2} allows to conclude. 
Two questions remain: is the corollary above still true for $C^3$ diffeomorphisms with vanishing asymptotic variation? Does the converse hold for diffeomorphisms without hyperbolic fixed points? This last hypothesis is necessary, for Möbius transformations with hyperbolic fixed points, seen as diffeomorphisms of $[0,1]$, have vanishing $L_\infty$. \end{rem}

This article raises some natural questions.
\begin{enumerate}
\item Which elements are distorted in the group of germs at $0$ of smooth diffeomorphisms of $[0,1]$ ? Answering this question would allow to describe the distorted elements in the group $\Diff^{\infty}_+([0,1])$. Of course, the same question holds for other regularities.
\item For $1 \leq r < \infty$, are the groups $\Diff^r_+(\Ss^1)$ and $\Diff^r_c(\R)$ locally perfect ? Notice that, in the case $r=2$, it is still unknown whether those groups are perfect.
\item For $2 \leq r < \infty$, is it true that any element of $\Diff^r_+(\Ss^1)$, $\Diff^r_+([0,1])$ or $\Diff^r_c(\R)$ with vanishing asymptotic variation is almost reducible ?
\end{enumerate}

\section{Preliminaries on interval diffeomorphisms}
\label{s:classical}

In this section, we recall some classical results regarding diffeomorphisms of the interval, that we will use in Sections~\ref{s:reduc} and \ref{ss:perfect}. We refer the reader to \cite[chap. 4]{Na11} and \cite[chap. IV-V and Appendice 3]{Yoc} for accessible proofs of these. We only mention the infinitely smooth setting, though there are more general statements that encompass finite regularity. Of course, what we state for the interval $[0,1]$ extends to any compact interval of $\R$. 

\subsection{Generating vector fields, Mather invariant}
\label{ss:gen-vec}

\paragraph{Generating vector field.} Given a smooth diffeomorphism $f$ of the interval $[0,1)$ fixing only~$0$, works of Szekeres and Kopell \cite{Sz58,Ko} imply that there exists a unique $C^1$ vector field on $[0,1)$ having $f$ as its time-$1$ map.
We will denote it by $X_f$ and call it the \emph{generating vector field of $f$}. According to Sergeraert \cite{Se}, $X_f$ is actually $C^\infty$ on $(0,1)$ but not necessarily on $[0,1)$. However, as a consequence of a result by Takens (see \cite{Ta}), if $f$ is not infinitely tangent to the identity at $0$, the vector field $X_f$ is smooth on $[0,1)$. Hence, the problem of regularity of $X_f$ at $0$ can occur only when the diffeomorphism $f$ is infinitely tangent to the identity at $0$.
Throughout the article, we will abbreviate ``infinitely tangent to the identity'' by ``ITI''.  

%

It also follows from Kopell's celebrated ``Lemma'' that, if $f$ and $g$ are as above and $\f$ is a $C^1$-diffeomorphism of $[0,1)$ conjugating $f$ to $g$, then $\f_*X_f = X_g$. 

\paragraph{Mather invariant.} If $f$ now denotes a smooth diffeomorphism of $[0,1]$ without interior fixed point, $f$ has two generating vector fields, one on $[0,1)$ (still denoted $X_f$) and one on $(0,1]$ (that we will denote by $Y_f$). Following Yoccoz \cite{Yoc}, we will denote by $(f_t)_{t\in\R}$ and $(f^t)_{t\in\R}$ their respective flows. Generically, these two vector fields do not coincide, and the Mather invariant of $f$, that we introduce now, ``measures'' this lack of coincidence.

Given $p,q\in(0,1)$, the maps $\psi_{X,p}:t\mapsto f_t(p)$ and $\psi_{Y,q}:t\mapsto f^t(q)$ define smooth diffeomorphisms from $\R$ to $(0,1)$ which both conjugate the translation $t\mapsto t+1$ on $\R$ to $f_{|(0,1)}$. Hence $\psi_{Y,q}^{-1}\psi_{X,p}=:M_f^{p,q}$ is a smooth diffeomorphism of $\R$ commuting with the translation by $1$, and one easily sees that changing the base points $p$ and $q$ boils down to pre- and post-composing $M_f^{p,q}$ by translations. The class of this diffeomorphism up to these pre- and post-compositions is the so-called \emph{Mather invariant of $f$}, which we will denote by $M_f$.

We will say that $f$ has a trivial Mather invariant if $M_f$ is the class of translations, which corresponds to the case where $X_f$ and $Y_f$ coincide, \emph{i.e.} where $f$ embeds in a $C^1$ flow on the whole interval $[0,1]$. 



\subsection{Sufficient conditions of conjugacy on the interval}
\label{s:crit}




In this section, we provide a conjugacy criterion (Lemma \ref{l:crit} below) that will be used in Sections~\ref{s:reduc} and \ref{ss:perfect}, for elements of the set $\Ddz([0,1])$ of smooth diffeomorphisms of the segment without interior fixed point and with trivial Mather Invariant (equivalently: diffeomorphisms which embed in the flow of a $C^1$ vector field without interior fixed point).
Given such a diffeomorphism, we will denote by $X_f$ its unique generating vector field and by $(f^t)_{t\in\R}$ the flow of $X_f$. In order to state the conjugacy criterion, it will be handy to dispose of the following notation:

\begin{defn}
\label{d:tf}
Given $f\in\Ddz([0,1])$, for every $p,q\in(0,1)$, there exists a unique $\tau\in\R$ such that $f^\tau(p)=q$. We will denote this number by $\tau_f(p,q)$, which is none but the time necessary to go from $p$ to $q$ following the flow of $X_f$ (this ``time'' can be negative). 
\end{defn}

Note that $\tau_f(p,q)$ is given by the simple formula:

\begin{equation}
\label{e:tf}
\tau_f(p,q) = \int_p^q \frac1{X_f}.
\end{equation}

Lemma \ref{l:crit} below is a detailed version, in some restricted situation, of the following classical fact. 

\begin{prop}[Mather \cite{Mather}, see also \cite{Yoc}]
\label{p:Mather}
Two $C^2$ diffeomorphisms of the interval without interior fixed point are $C^1$-conjugated if and only if their germs at the endpoints are $C^1$-conjugated and they have the same Mather invariant.
\end{prop}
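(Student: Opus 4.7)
My plan is to prove the two implications separately. The forward direction is a brief calculation using Kopell's lemma; the reverse requires the Mather invariant equality to glue two germ conjugacies into a global one.

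For the forward direction, suppose $\varphi\in\Diff^1([0,1])$ satisfies $\varphi f\varphi^{-1}=g$. Restricting $\varphi$ to neighborhoods of the endpoints immediately yields the $C^1$-conjugation of germs. For the Mather invariant, Kopell's lemma applied on $[0,1)$ and on $(0,1]$ gives $\varphi_* X_f=X_g$ and $\varphi_* Y_f=Y_g$, so $\varphi$ intertwines the flow parametrizations: $\psi_{X_g,\varphi(p)}=\varphi\circ\psi_{X_f,p}$ and $\psi_{Y_g,\varphi(q)}=\varphi\circ\psi_{Y_f,q}$ for any $p,q\in(0,1)$. Composing yields $M_g^{\varphi(p),\varphi(q)}=M_f^{p,q}$, so the two Mather invariants coincide.

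For the reverse direction, assume we have $C^1$-germ conjugacies $\varphi_0$ at $0$ and $\varphi_1$ at $1$, and that $M_f=M_g$. By Kopell's lemma (extending the germs to small closed one-sided neighborhoods if needed), $\varphi_0$ conjugates $X_f$ to $X_g$ near $0$ and hence intertwines the flows $f_t$ and $g_t$ there; similarly $\varphi_1$ intertwines $f^t$ with $g^t$ near $1$. Choose base points $p$ near $0$ and $q$ near $1$ inside the respective germ domains. The equality $M_f=M_g$, which holds only modulo pre- and post-composition by translations, can be upgraded to the exact equality
\[
\psi_{Y_g,\varphi_1(q)}^{-1}\circ\psi_{X_g,\varphi_0(p)}\,=\,\psi_{Y_f,q}^{-1}\circ\psi_{X_f,p}
\]
by absorbing the two residual translations into the base points: replacing $p$ by $f_s(p)$ reparametrizes $\psi_{X_f,p}$ by a translation and is harmless as long as $f_s(p)$ remains in the germ domain near $0$, and similarly for $q$ via the $Y_f$-flow near $1$.

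Setting $\varphi:=\psi_{X_g,\varphi_0(p)}\circ\psi_{X_f,p}^{-1}=\psi_{Y_g,\varphi_1(q)}\circ\psi_{Y_f,q}^{-1}$ defines a $C^\infty$ diffeomorphism of $(0,1)$ which, by construction, intertwines $X_f$ with $X_g$ and $Y_f$ with $Y_g$, and therefore conjugates $f$ to $g$ on $(0,1)$. For $x=f_t(p)$ near $0$, the first formula together with $\varphi_0\circ f_t=g_t\circ\varphi_0$ gives $\varphi(x)=g_t(\varphi_0(p))=\varphi_0(x)$, so $\varphi$ coincides with $\varphi_0$ on a neighborhood of $0$ and extends $C^1$ to $0$ with $\varphi(0)=0$. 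Symmetrically, $\varphi$ coincides with $\varphi_1$ near $1$, yielding a global $C^1$-conjugacy of $f$ and $g$. The main obstacle, as I see it, is the translational bookkeeping in the Mather invariant equality: it is a two-parameter ambiguity that must be absorbed using precisely the two base points $p$ and $q$. Once they are chosen correctly, $C^1$-regularity at the endpoints is automatic, since the flow-based formula for $\varphi$ on $(0,1)$ agrees with the $C^1$-germ $\varphi_0$ (resp.\ $\varphi_1$) in neighborhoods of $0$ (resp.\ $1$).
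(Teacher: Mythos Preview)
There is a genuine gap in the ``absorbing translations'' step. When you replace $p$ by $f_s(p)$, the left-hand side of your displayed equality changes as well, since $\varphi_0(f_s(p))=g_s(\varphi_0(p))$ (because $\varphi_0$ intertwines the $X$-flows). Concretely, $\psi_{X_f,f_s(p)}=\psi_{X_f,p}\circ T_s$ \emph{and} $\psi_{X_g,\varphi_0(f_s(p))}=\psi_{X_g,\varphi_0(p)}\circ T_s$, so both $M_f^{p,q}$ and $M_g^{\varphi_0(p),\varphi_1(q)}$ acquire the same right translation $T_s$, and the discrepancy $M_g^{\varphi_0(p),\varphi_1(q)}=T_a\,M_f^{p,q}\,T_b$ persists with the \emph{same} $a,b$. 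The analogous computation holds for $q$. Hence moving the base points cannot kill the residual translations, and your two formulas for $\varphi$ do not agree in general; as written, $\varphi$ equals $\varphi_0$ near $0$ but need not equal $\varphi_1$ near $1$.

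The fix is to exploit the freedom in the \emph{germ conjugacies} rather than in the base points: $\varphi_0\circ f_{s}$ (resp.\ $\varphi_1\circ f^{t}$) is again a $C^1$ germ conjugacy for every $s,t\in\R$, and replacing $\varphi_0$ by $\varphi_0\circ f_{-b}$ turns the relation into $M_g^{\tilde\varphi_0(p),\varphi_1(q)}=T_a\,M_f^{p,q}$. With this choice, your $\varphi:=\psi_{X_g,\tilde\varphi_0(p)}\circ\psi_{X_f,p}^{-1}$ equals $\tilde\varphi_0$ near $0$ and $\varphi_1\circ f^{a}$ near $1$, both of which are $C^1$, and the proof goes through. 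Equivalently (and this is how the paper argues in the restricted setting of Lemma~\ref{l:crit}), one keeps $\varphi_0$, defines $\varphi$ from it, and then shows that near $1$ one has $\varphi=\varphi_1\circ f^\sigma$ for some $\sigma$; the paper does not give a full proof of Proposition~\ref{p:Mather} itself, citing Mather and Yoccoz instead.
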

This provides a complete $C^1$-conjugacy invariant for $C^2$ diffeomorphisms without interior fixed point (beware that this does not generalize to conjugacy in higher regularity). Below, the condition on the Mather invariant is trivially ensured since we only consider diffeomorphisms with trivial Mather invariant,  as in the rest of the article. 


\begin{lem}[Conjugacy criterion]
\label{l:crit}
Let $f,g\in\Ddz([0,1])$ and assume that there exist $C^1$ diffeomorphisms $\f_0$ and $\f_1$ between neighborhoods of $0$ and $1$ respectively such that $\f_0 f \f_0^{-1}=g$ near $0$ and $\f_1 f \f_1^{-1}=g$ near $1$.

Then there is a unique $C^1$ diffeomorphism $\f$ of the whole interval $[0,1]$ coinciding with $\f_0$ near $0$ and which conjugates $f$ to $g$. Moreover, $\f$ is $C^\infty$ on $(0,1)$ and, near $1$, it coincides with~$\f_1\circ f^\sigma$ for some real number $\sigma$ (which depends on $f$, $g$, $\f_0$ and $\f_1$). 

More precisely, for any $x_0,x_1$ close enough to $0$ and $1$ respectively, one has:
$$\sigma = \tau_f\bigl(x_0,x_1\bigr)-\tau_g\bigl(\f_0(x_0),\f_1(x_1)\bigr)
.$$
In particular, if this quantity is an integer, or if $X_f$ is $C^\infty$ at $1$, the regularity of $\f$ at $1$ is that of $\f_1$.
%
\end{lem}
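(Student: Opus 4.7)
The plan is to build $\varphi$ by \emph{flowing out} the local conjugacy $\varphi_0$ along the generating vector fields of $f$ and $g$, and then to identify its behavior near $1$ by comparing it with $\varphi_1$. First, the Kopell-type fact recalled in Section~\ref{ss:gen-vec} ensures that any $C^1$ local conjugacy intertwines the generating vector fields, so that $(\varphi_0)_* X_f = X_g$ near $0$ and $\varphi_0\circ f^t = g^t\circ\varphi_0$ wherever both sides are defined. Picking any base point $x_0$ sufficiently close to $0$, I would then set
$$\varphi(x) := g^{\tau_f(x_0,x)}\bigl(\varphi_0(x_0)\bigr) \quad \text{for every } x \in (0,1),$$
check that this is independent of $x_0$ and agrees with $\varphi_0$ where both are defined, and extend by $\varphi(0)=0$. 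This produces a $C^1$ diffeomorphism of $[0,1)$ onto itself that satisfies $\varphi\circ f = g\circ\varphi$ and is $C^\infty$ on $(0,1)$, since both flows are smooth in the interior.

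Near $1$, both $\varphi$ and $\varphi_1$ are $C^1$ local conjugacies between $f$ and $g$, so $h := \varphi_1^{-1}\circ\varphi$ is a $C^1$ diffeomorphism of some neighborhood of $1$ commuting with $f$. Applying the same pushforward result to $h$ yields $h_* X_f = X_f$, hence $h$ commutes with the flow $(f^t)$, and transitivity of this flow on a one-sided neighborhood of $1$ forces $h = f^\sigma$ for a unique real $\sigma$. Thus $\varphi = \varphi_1\circ f^\sigma$ near $1$, which provides the $C^1$ extension of $\varphi$ across $1$ with $\varphi(1)=1$; in particular $\varphi$ is a $C^1$ diffeomorphism of $[0,1]$. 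Uniqueness is then immediate: any competing $\psi$ agrees with $\varphi$ near $0$, and the conjugacy relation $\psi\circ f = g\circ\psi$ propagates this equality along the forward $f$-orbit of such a neighborhood, which exhausts $(0,1)$, with continuity handling $x=1$.

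For the formula for $\sigma$, I would evaluate $\varphi(x_1)$ in two ways, for $x_0$ near $0$ and $x_1$ near $1$. Via the flow extension,
$$\varphi(x_1) = g^{\tau_f(x_0,x_1)}\bigl(\varphi_0(x_0)\bigr),$$
while via $\varphi = \varphi_1\circ f^\sigma$ together with the pushforward relation $(\varphi_1)_* X_f = X_g$,
$$\varphi(x_1) = g^\sigma\bigl(\varphi_1(x_1)\bigr) = g^{\sigma + \tau_g(\varphi_0(x_0),\varphi_1(x_1))}\bigl(\varphi_0(x_0)\bigr).$$
Comparing flow times yields the announced identity. The last regularity statement is then automatic: if $\sigma\in\Z$, then $f^\sigma$ is globally smooth; while if $X_f$ is $C^\infty$ at $1$, Takens' result recalled in Section~\ref{ss:gen-vec} makes $f^\sigma$ smooth near $1$ for every real $\sigma$. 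In both cases, $\varphi = \varphi_1\circ f^\sigma$ inherits the regularity of $\varphi_1$ at $1$. The main obstacle I anticipate is the centralizer step — producing $\sigma$ when $f$ may be ITI at $1$ and $Y_f$ is only $C^1$ there — but the Kopell invariance of the generating vector field under $C^1$ conjugacy is precisely what turns a delicate $C^1$-centralizer problem into a transparent flow-orbit identification.
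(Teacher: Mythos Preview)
Your argument is correct and follows essentially the same route as the paper: you extend $\varphi_0$ along the flow (the paper uses the integer iterates $g^n\varphi_0 f^{-n}$, which gives the same map), then identify $\varphi$ with $\varphi_1\circ f^\sigma$ near $1$ via the push-forward of the generating vector fields, and your two-way evaluation of $\varphi(x_1)$ is the paper's change-of-variables computation rephrased.

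One presentational point worth tightening: you assert that $h=\varphi_1^{-1}\varphi$ is a $C^1$ diffeomorphism of a neighborhood of $1$ and then invoke the Kopell push-forward for $h$, but at that stage $\varphi$ is only known to be $C^1$ on $[0,1)$, so this is circular as written. The paper avoids this by applying the push-forward fact to $\varphi_1$ alone (which \emph{is} $C^1$ at $1$; this is precisely where the trivial Mather invariant is used, to identify the right-generating vector field of $f$ with $X_f$), and then combining $(\varphi_1)_*X_f=X_g$ with $\varphi_*X_f=X_g$ on $(0,1)$ to obtain $h_*X_f=X_f$ on a \emph{punctured} neighborhood of $1$; the flow-orbit identification then gives $h=f^\sigma$ there, and only afterward does $C^1$ regularity of $\varphi$ at $1$ follow. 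Your closing paragraph suggests you already see this, so it is a matter of ordering rather than substance. Finally, the reference to Takens in the last step is unnecessary: once $X_f$ is \emph{assumed} $C^\infty$ at $1$, its flow maps $f^\sigma$ are automatically smooth there.
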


This lemma implies that:
\begin{itemize}
\item if the germs of $f$ and $g$ at each endpoint are smoothly conjugated and the $C^1$ flow in which $f$ embeds is smooth at at least one endpoint (which, according to Takens \cite{Ta}, happens as soon as $f$ is not ITI at this endpoint), then $f$ and $g$ are smoothly conjugated; 
\item the conclusion of the previous item also holds without assumption on the regularity of the flow of $f$ at the endpoints but with the ``synchronicity'' assumption that, with the notations of the lemma, $\sigma$ is an integer;
\item if what we want is construct a nice diffeomorphism $g$ conjugated to some given $f$ while prescribing the germs of the conjugacy at the endpoint, we need to be able to control this quantity $\sigma$ through the construction.
\end{itemize}


\begin{proof}[Proof of Lemma \ref{l:crit}]
Without loss of generality, we can assume $f>\id$ and $g>\id$ on $(0,1)$.

We define $\f$ step by step. On some neighborhood $[0,x]$ of $0$, we must let $\f=\f_0$, and we have $\f f =g\f$ on $[0,f^{-1}(x)]$ provided $x$ is small enough. To have this relation on $[f^{-1}(x),x]$, we must let $\f=g \f_0 f^{-1}$ on $[x,f(x)]$. Then to have it on $[x,f(x)]$, we must let $\f=g^2 \f_0 f^{-2}$ on $[f(x),f^2(x)]$, and so on. We get some (completely determined) $\f$ defined on $[0,1)$. By construction, $\f=g^n\f f^{-n}$ on $(0,1)$ for every $n\in\Z$. Now for any $x\in(0,1)$, there exists $n\in\N$ such that $f^{-n}(x)$ belongs to a neighborhood of $0$ where $\f=\f_0$ is $C^1$, so $\f$ is $C^1$ on $[0,1)$. 

In particular, $\f$ must 
send $X_f$ to $X_g$ on $[0,1)$ (cf. Section \ref{ss:gen-vec}), and in particular on $(0,1)$, where we thus have $D\f=\frac{X_g\circ\f}{X_f}$. This gives the $C^\infty$ regularity of $\f$ on $(0,1)$ by induction, since, according to Sergeraert, $X_f$ and $X_g$ are $C^\infty$ on $(0,1)$. 

Now this is where we use the triviality of the Mather invariants: since $X_f$ and $X_g$ are also the right generating vector fields of $f$ and $g$, and $\f_1$ conjugates $f$ to $g$ near $1$, one also has $(\f_1)_*X_f=X_g$ near $1$. Hence, on $(y,1)$ for $y$ close o $1$, $(\f_1^{-1}\f)_*X_f=X_f$, which implies that $\f_1^{-1}\f$ coincides there with a flow-map $f^\sigma$ of $X_f$, and this of course extends by continuity at $1$. In particular, $\f$, as $\f_1$ and $f^\sigma$, is $C^1$ near $1$. \medskip

To prove the last part of the statement, observe that, for $x_0$ and $x_1$ close enough to $0$ and $1$ respectively, using two changes of variables (for the second and fifth equality),
\begin{align*}
\tau_f(x_0,x_1) &= \int_{x_0}^{x_1}\frac1{X_f} =  \int_{\f(x_0)}^{\f(x_1)}\frac1{\f_*X_f}
=\int_{\f_0(x_0)}^{(\f_1\circ f^\sigma)(x_1)}\frac1{X_g}\\&\\
&=\tau_g\bigl(\f_0(x_0),\f_1(x_1)\bigr)+\int_{\f_1(x_1)}^{\f_1(f^\sigma(x_1))}\frac1{X_g}\\
&=\tau_g\bigl(\f_0(x_0),\f_1(x_1)\bigr)+\int_{x_1}^{f^\sigma(x_1)}\frac1{X_f} = \tau_g\bigl(\f_0(x_0),\f_1(x_1)\bigr)+\sigma,
\end{align*}
which concludes the proof.
\end{proof}

\section{Almost reducibility of diffeomorphisms}
\label{s:reduc}

\subsection{Main ideas and central technical statements}

This section is devoted to the proof of Theorem \ref{t:reduc2}. This theorem deals with three different groups: $\Diff^\infty_+(\Ss^1)$, $\Diff^\infty_+([0,1])$ and $\Diff^\infty_c(\R)$. Let us first observe that the latter easily reduces to the case of the segment. Indeed, let $f\in\Diff^\infty_c(\R)$ be a diffeomorphism with vanishing asymptotic variation, that we are trying to conjugate closer and closer to the identity. Without loss of generality, we can assume $f$ is supported in $[0,1]$, and $f$ is of course ITI at $0$ and $1$. Hence if we admit Theorem \ref{t:reduc2} in the case of the segment, the restriction of $f$ to $[0,1]$ can be conjugated closer and closer to the identity by smooth diffeomorphisms of $[0,1]$. These can easily be extended to smooth diffeomorphisms of $\R$ with compact support, say in $[-1,2]$, and the conjugates of $f$ by these extensions are the extensions of the conjugates on $[0,1]$ by the identity, so they too are  closer and closer to the identity on $\R$, as required. \medskip

The case of the circle is a little bit more subtle. Let us start with a diffeomorphism $f$ of the circle with at least one fixed point, which can be assumed to be $0\in\R/\Z$, and with vanishing asymptotic variation. Moreover, if $f$ has ITI fixed points, we assume $0$ is one of them. Let $\hat f$ be the diffeomorphism of $[0,1]$ obtained by ``opening the circle'' at $0$, and assume it can be brought closer and closer to the identity by conjugacy on $[0,1]$. If the one-sided right $\infty$-jets of the conjugacies at~$0$ and the left one at $1$ match, we can ``close back the circle'' and get smooth diffeomorphisms, both for the conjugacies and the conjugates. The following statement claims that this can be ensured, and thus implies Theorem \ref{t:reduc2} in the case of a diffeomorphism with at least one fixed point.

\begin{thmintro}
\label{t:reduc3}
Any $f\in\Diff^\infty_+([0,1])$ with vanishing asymptotic variation can be conjugated arbitrarily close to the identity. Furthermore, if $f$ is ITI at both endpoints, the conjugacies can be required to be ITI there as well, while if $f$ is nowhere ITI, the conjugacies can be required to coincide with homotheties of the same factor near the endpoints. \end{thmintro}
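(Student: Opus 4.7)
The plan rests on the result of \cite{EN21}: the vanishing of the asymptotic variation of $f\in\Diff^\infty_+([0,1])$ is equivalent to $f$ being the time-$1$ map $X^1$ of a $C^1$ vector field $X$ on $[0,1]$ without hyperbolic zero. Set $Z:=\Fix(f)=\{X=0\}$ and decompose $[0,1]\setminus Z$ into its (at most countably many) maximal open connected components, which I will call \emph{free intervals} $(a,b)$. On each such interval, $X$ is $C^\infty$ by Sergeraert, while by Takens' theorem $X$ is $C^\infty$ at an endpoint $c$ of $(a,b)$ if and only if $f$ is not ITI at $c$. Moreover the restriction of $f$ to each $[a,b]$ belongs to $\Ddz([a,b])$, since it embeds in the flow of $X$, so the regularity criterion provided by Lemma~\ref{l:crit} will be the natural gluing tool. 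The overall strategy is to conjugate $f$ piecewise, free interval by free interval, to something $C^\infty$-close to the identity, and to glue smoothly across $Z$.

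On a free interval $(a,b)$ where $f$ is not ITI at either endpoint, $X$ extends smoothly to $[a,b]$. Fixing $p_0\in(a,b)$, the time-change diffeomorphism $h_\eps\colon X^t(p_0)\mapsto X^{\eps t}(p_0)$ extends to a smooth diffeomorphism of $[a,b]$ fixing the endpoints and conjugates $f|_{[a,b]}$ to $X^\eps|_{[a,b]}$, which tends to $\id$ in $C^\infty([a,b])$ as $\eps\to 0$. Lemma~\ref{l:crit} then lets me prescribe the germs of $h_\eps$ at $a$ and $b$ to be, for instance, homotheties of a chosen factor $\lambda_\eps$, provided the synchronization parameter $\sigma$ of the lemma is an integer; the explicit formula for $\sigma$ shows this can be arranged by a small perturbation of $\eps$. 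Running this construction on every free interval with matching germs (the identity or a common homothety) at each interior fixed point of $f$ makes the resulting $h_\eps$ globally smooth. In the nowhere ITI case, choosing a common factor $\lambda_\eps$ at both $0$ and $1$ yields the corresponding boundary condition.

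The main obstacle is the ITI case. If $f$ is ITI at an endpoint $c$ of some free interval $(a,b)$, $X$ need only be $C^1$ at $c$, so $X^\eps$ fails to lie in $\Diff^\infty_+([a,b])$ and cannot serve as the model conjugate. Here the plan is to apply the time-change construction only on a sub-interval bounded away from $c$, and to let $h_\eps$ coincide with a homothety of small factor $\lambda$ on a shrinking neighborhood $[c,c+R]$ of $c$. Writing $f(x)=x+\phi(x)$ with $\phi$ flat at $c$, a direct computation yields $h_\eps f h_\eps^{-1}(x)-x=\lambda\,\phi(c+(x-c)/\lambda)$ on this neighborhood, whose $C^k$ norm is of order $\lambda^{1-k}\,\|\phi^{(k)}\|_{L^\infty([c,c+R])}$; the flatness of $\phi$ at $c$ then gives $C^\infty$-smallness provided $R$ shrinks fast enough with $\lambda$. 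The transition between the homothety region near $c$ and the time-change region further in must be performed smoothly, again via Lemma~\ref{l:crit} with the integer-$\sigma$ condition; an analogous construction yields the ITI boundary conjugacies at $0$ or $1$ whenever $f$ is ITI there. The key technical difficulty lies exactly here: upgrading the mere flatness of the generating vector field at ITI fixed points to quantitative $C^\infty$-smallness of the conjugate on a whole neighborhood, while preserving smoothness of the global $h_\eps$ across every fixed point and both endpoints of $[0,1]$.
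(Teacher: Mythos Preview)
Your plan has a genuine gap at the very first step: the time-change map $h_\eps\colon X^t(p_0)\mapsto X^{\eps t}(p_0)$ does \emph{not} in general extend to a smooth diffeomorphism of $[a,b]$. Such an extension would satisfy $(h_\eps)_*X=\eps X$, hence give a smooth conjugacy between the germs of $X$ and $\eps X$ at each parabolic endpoint; but these germs carry a residue-type formal invariant that scales nontrivially with $\eps$. Concretely, for $X(x)=x^2+x^3$ near $0$, equating coefficients in $h'X=\eps\, X\circ h$ with $h(x)=c_1x+c_2x^2+\ldots$ gives $c_1=1/\eps$ from the $x^2$-term and then $1/\eps=1/\eps^2$ from the $x^3$-term, forcing $\eps=1$. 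So $X^\eps$ is in general not $C^\infty$-conjugate to $f$ on $[a,b]$, and the appeal to Lemma~\ref{l:crit} to ``prescribe homothety germs'' fails for the same reason: a homothety does not conjugate $f$ to $X^\eps$ near a parabolic zero either. The paper avoids this by not fixing $X^\eps$ as the target: instead it pushes $X$ forward by a homothety near each endpoint (automatically small there since $DX$ vanishes) and interpolates with a small constant vector field in the middle, adjusting the interpolation so that the $\sigma$ of Lemma~\ref{l:crit} equals $0$ (Proposition~\ref{l:woITI}); this is exactly what makes the conjugacy a homothety near the endpoints and allows gluing across non-ITI fixed points with a common factor.

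For ITI endpoints, and especially for accumulation points of $\Fix(f)$, your interval-by-interval scheme with a nontrivial conjugacy near every fixed point provides no mechanism for global smoothness or uniform $C^r$ control. The paper's remedy is to take the conjugacy equal to $\id$ (or to an integer power of $f$) on whole neighborhoods of ITI points, leaving $f$ unchanged there where it is already close to $\id$: this is the finite subdivision of Claim~\ref{c:subdiv} into ``type~A'' intervals (no action needed) and ``type~B'' intervals (no interior ITI points), together with the integrality of $\sigma$ in Proposition~\ref{p:reduc}, which makes the conjugacy a genuine power of $f$, hence smooth, near each ITI boundary point.
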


Theorem \ref{t:reduc2} in the case of a diffeomorphism of the circle with a nonfixed periodic orbit is deduced from Theorem \ref{t:reduc3} in Subsection \ref{ss:rational}.

We are thus reduced to proving Theorem \ref{t:reduc3}. Let us present the main ideas and difficulties, starting from the simplest situation and adding one complication at a time. Let $f$ be a smooth diffeomorphism of the interval with vanishing asymptotic variation or, equivalently, which embeds in a $C^1$ flow and has only parabolic fixed points.  Fix $r\in\N^*$.\medskip

\textbf{We first assume $f$ has no interior fixed point \textbf{(Restriction 1)}. }
\begin{itemize}
\item \textbf{(Idea 1)} The first observation is that it is not difficult to conjugate $f$ to make it close to the identity near the boundary. Indeed, conjugating $f$ by homotheties of big ratios near the endpoints, we get parabolic germs of diffeomorphisms whose higher order derivatives at $0$ and $1$ are as small as we like. We could then interpolate between these germs to get a diffeomorphism $\tilde f$ of the whole interval which is $C^r$-close to the identity. However \textbf{(Problem 1)}, for this diffeomorphism to be \emph{globally} even just $C^1$-conjugated to $f$, it needs to embed in a $C^1$ flow itself (cf. Proposition \ref{p:Mather}).
\item \textbf{(Idea 2)} It is thus more natural to work on the generating vector field of $f$ rather than $f$ itself, \emph{i.e.} to conjugate, or rather push-forward, the generating vector field by homotheties of big ratio near the boundary, interpolate between the new germs of vector fields and take $\tilde f$ to be the time-$1$ map of the resulting vector field. This raises several problems. First, the germs of generating vector fields are generally only $C^1$ at the boundary \textbf{(Problem 2)}, so we cannot hope to get a $C^r$ control on the time-$1$ map of the resulting vector field, or at least not that easily.
\item \textbf{Assume for a while the generating vector field of $f$ is $C^\infty$ at the endpoints \textbf{(Restriction 2)}}. Then the above idea indeed works: one can construct a smooth vector field $\tilde X$ which is as $C^r$-close to the zero vector field as we like, and thus whose time-$1$ map $\tilde f$ is close to the identity, and which is smoothly conjugated to the initial one near the boundary points by homotheties of the same ratio if we like. By the conjugacy criterion of the previous section (Lemma \ref{l:crit}), its time-$1$ map $\tilde f$ is conjugated to $f$ by a $C^1$ diffeomorphism which is smooth in the interior, coincides with a homothety near $0$ and with a homothety \emph{composed with a flow map $\tilde f^\sigma$ of $\tilde X$} near $1$, and is thus smooth on the whole interval. This yields Theorem \ref{t:reduc2} in the particular case of the segment $[0,1]$, without interior fixed point and with smooth generating vector field. But recall that, in order to deal with the circle case, we need the stronger statement of Theorem \ref{t:reduc3}. Namely, we would like the flow map $\tilde f^\sigma$ above to be the identity near $1$, \emph{i.e.} we would like $\sigma$ to be $0$ \textbf{(Problem 3)}, which requires some extra care in the interpolation. 
\end{itemize}
Proposition \ref{l:woITI} below claims that all of this can indeed be done and also allows to deal with a finite number of interior fixed points, while keeping the restriction on the smoothness of $X$ (cf. Section \ref{ss:easy-case}).\medskip

We now need to get rid of the above restrictions, \emph{i.e.}, roughly, to consider the case where there are possibly infinitely many ITI fixed points. If there aren't, the fixed points are isolated and thus in finite number, and the generating vector field is smooth according to Takens \cite{Ta}. \medskip

Let us first try to get rid of Restriction 2, while keeping Restriction 1. In other words, \textbf{let us start with a smooth diffeomorphism $f$ without interior fixed points whose generating vector field $X$ is not smooth at the boundary}. To fix ideas, let us consider the case where $X$ is smooth neither at $0$ nor at $1$, which implies that $f$ is ITI at both endpoints.

\begin{itemize}
\item In order to find an $\tilde f$ smoothly conjugated to $f$ and close to the identity, one can first keep $X$, and thus its time-$1$ map, unchanged near $0$ and $1$, where $f$ is already close to $\id$, and try to interpolate between the two germs of vector fields \emph{in such a way that the time-$1$ map of the resulting vector field $\tilde X$ be close to the identity}. This time, this cannot be guaranteed by making sure $\tilde X$ is $C^r$-small \emph{on $[0,1]$}, for $\tilde X$ is not even $C^2$ at $0$ and $1$ \textbf{(Problem 4)}. But in fact, roughly, we only need it to be $C^r$-small \emph{outside a neighborhood of the boundary where it coincides with~$X$}, for in this neighborhood, its time-$1$ map is, as $f$, close to $\id$. The key fact here is that, according to Lemma \ref{lem:regularitypoints}, though $X$ is not smooth at $0$, say, there are regions arbitrarily close to $0$ where all the derivatives of $X$ up to order $r$ are small. The idea is then to keep $X$ unchanged between $0$ and such a region, interpolate between $X$ and a constant vector field in this region, do the same near $1$ and then interpolate between the two constant pieces of vector fields. The first interpolation near each of the endpoints is pretty much what is done in Proposition \ref{p:interp-ITI} below, in the more general context of a possible accumulation of non-ITI fixed points at $0$. 
\item According to Lemma \ref{l:crit}, after the second interpolation, between the left and the right hand sides of the interval, the time-$1$ map of the resulting vector field is conjugated to $f$ by a $C^1$ diffeomorphism $\f$ which is smooth in the interior and coincides with the identity near~$0$ and with some flow map of $\tilde X$ near $1$, or of $X$, since $X$ and $\tilde X$ coincide there. Hence $\f$ is smooth on $[0,1)$ but not necessarily at $1$ since $X$ is not smooth there \textbf{(Problem 5)}. We can guarantee the smoothness of the conjugacy at $1$ by ensuring that the corresponding time of the flow is an integer, and thus the conjugacy coincides with a power of $f$ near $1$, which is smooth. This is indeed possible, and not too hard in the restricted case we are currently dealing with, and the resulting statement is roughly that of Proposition~\ref{p:reduc} below in the case without interior fixed point. To obtain the same statement with interior non-ITI fixed point, one must work a bit harder and throw in Lemma~\ref{l:woITI}.
\item The completely general case easily reduces to the one of Proposition \ref{p:reduc} (cf. Section \ref{ss:general-case}). 
\end{itemize}
%

%

We now state the three results alluded to above, which are the heart of the proof of Theorem~\ref{t:reduc3}. In Section \ref{ss:easy-case}, we show that, for diffeomorphisms which embed in a smooth flow with finitely many fixed points, the statement of Theorem \ref{t:reduc3} directly follows from Proposition \ref{l:woITI}. In Section \ref{ss:general-case}, we prove that, in the general case, Theorem \ref{t:reduc3} easily reduces to Proposition \ref{p:reduc}. Then we devote one section to each of the proofs of Propositions \ref{l:woITI}, \ref{p:interp-ITI} and \ref{p:reduc}.

\begin{prop} 
\label{l:woITI}
Let $\eta>0$, $r\in\N$ and $X$ be a $C^\infty$ vector field 
on $[0,1]$ vanishing only at $0$ and~$1$ and with vanishing derivative at these points. Then there exists $\lambda_X>1$ such that, for any $\lambda\ge\lambda_X$, there exists $\f\in\Diff^\infty_+([0,1])$ such that:
\begin{enumerate}
\item $\|\varphi_*X\|_r<\eta$;
\item on some neighborhood of each endpoint,
$\f$ coincides with the homothety of ratio $\lambda$ centered at that point.
\end{enumerate}
\end{prop}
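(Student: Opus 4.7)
My plan is to construct $\varphi$ indirectly: I will first specify the target vector field $Y := \varphi_*X$, then recover $\varphi$ as the diffeomorphism conjugating $X$ to $Y$ via the flow-time matching of Lemma~\ref{l:crit}. Since $\varphi$ must coincide, near each endpoint, with the homothety of ratio $\lambda$ centered at it, the germs of $Y$ near $0$ and $1$ are forced to be
$$Y(y) = \lambda X(y/\lambda) \quad \text{near }0, \qquad Y(y) = \lambda X\bigl(1+(y-1)/\lambda\bigr) \quad \text{near }1.$$
The hypotheses $X(0)=X(1)=0$ and $X'(0)=X'(1)=0$ make these prescribed germs extremely tame: fixing once and for all some $\epsilon\in(0,1)$ (independent of $\lambda$) and setting $\beta := (1-\epsilon)/(2\lambda)$, a direct computation shows that the formulas above define $Y$ on $[0,(1-\epsilon)/2]$ and $[(1+\epsilon)/2,1]$ with $\|Y\|_r = O(1/\lambda)$ as $\lambda\to\infty$.

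The crux of the proof is then to extend $Y$ smoothly and positively to the middle interval $[(1-\epsilon)/2,(1+\epsilon)/2]$ so that the resulting $Y$ (i) matches the prescribed $\infty$-jets at the gluing points, (ii) satisfies $\|Y\|_r<\eta$ on $[0,1]$, and (iii) satisfies the integral identity
$$\int_{(1-\epsilon)/2}^{(1+\epsilon)/2} \frac{dy}{Y(y)} \;=\; T_\lambda \;:=\; \int_{\beta}^{1-\beta} \frac{dx}{X(x)}.$$
Condition (iii) is what forces, via Lemma~\ref{l:crit}, the shift parameter $\sigma$ to vanish and hence the resulting $\varphi$ to coincide with the \emph{prescribed} homothety near $1$, rather than with a homothety post-composed with a nontrivial flow map of $X$. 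Since $1/X$ is non-integrable at the endpoints, $T_\lambda\to\infty$ (at least linearly in $\lambda$) as $\lambda\to\infty$, so the average value of $Y$ in the middle must be $\sim \epsilon/T_\lambda = O(1/\lambda)$, commensurate with the boundary values. A bump-style interpolation thus produces a first candidate with $\|Y\|_r=O(1/\lambda)$, and the integral constraint can then be enforced exactly via the intermediate value theorem applied to a one-parameter family (e.g.\ adding a small scalar multiple of a fixed bump supported in the center of the middle), without destroying (i) or the $C^r$-smallness.

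Once $Y$ is constructed, I define $\varphi$ by fixing any $x_0\in(0,\beta)$, setting $y_0:=\lambda x_0$, and imposing $\varphi(f_X^t(x_0))=f_Y^t(y_0)$ for all $t\in\R$, where $(f_X^t)$ and $(f_Y^t)$ denote the flows of $X$ and $Y$ on $(0,1)$. On $(0,\beta]$, the identity $Y(y)=\lambda X(y/\lambda)$ implies that the dilation $y\mapsto\lambda y$ conjugates the flow of $X$ to that of $Y$, so $\varphi$ coincides there with the homothety $x\mapsto\lambda x$. By the flow-time computation at the end of the proof of Lemma~\ref{l:crit}, the integral condition (iii) ensures that $\varphi$ also coincides with the prescribed homothety on $[1-\beta,1]$. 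On the open interval $\varphi$ is smooth as a composition of smooth flow maps (cf.\ Sergeraert), and the two homothety germs provide smooth extensions to the endpoints. By construction $\varphi_*X=Y$, so $\|\varphi_*X\|_r=\|Y\|_r<\eta$ as soon as $\lambda_X$ is taken large enough, as required.

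I expect the main obstacle to be the middle-interpolation step: one must produce a $C^\infty$ positive function that simultaneously prescribes full $\infty$-jets at both endpoints of the middle interval, has small $C^r$ norm, and satisfies a nontrivial integral constraint whose right-hand side $T_\lambda$ itself depends on $\lambda$. The jet-matching and the $C^r$-smallness follow from standard bump-function techniques together with the uniform smallness of the prescribed boundary data, but the integral constraint requires a careful one-parameter family argument, and one must verify that the scalar adjustment needed to satisfy it does not destroy the $C^r$-smallness.
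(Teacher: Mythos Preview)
Your proposal is correct and follows essentially the same strategy as the paper: build the target vector field $Y=\varphi_*X$ first (germs near $0$ and $1$ forced by the homothety condition, interpolation in the middle), enforce the integral identity equivalent to $\sigma=0$ in Lemma~\ref{l:crit} via the intermediate value theorem on a one-parameter family, and recover $\varphi$ from that lemma. The paper's execution is just slightly more concrete: it writes down an explicit auxiliary diffeomorphism $\varphi_\lambda$ (the two homotheties near the endpoints glued to a strong contraction of the middle), sets $X_\lambda=(\varphi_\lambda)_*X$, and uses the one-parameter family $Y=\rho\,X_\lambda+(1-\rho)u$ with $u\in(0,\|X_\lambda\|_0]$, so that one endpoint of the IVT bracketing comes for free from the change-of-variables identity $\int 1/X_\lambda=\int 1/X$.
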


\begin{prop}
\label{p:interp-ITI}
Let $X$ be a $C^1$ vector field on $[0,1)$, $C^\infty$ and nowhere infinitely flat on $(0,1)$, whose time-$1$ map $f$ is $C^\infty$ on $[0,1)$ and ITI at $0$. 
Let $\eta >0$ and $r\in\N^*$. Then there exists a non-fixed point $0<x_0<1-\eta$ of $X$ arbitrarily close to $0$ and a $C^1$ vector field $Y$ on $[0,1)$ with the following properties.
\begin{enumerate}
\item $Y_{|[0,x_0]}=X_{|[0,x_0]}$.
\item The vector field $Y$ is equal to the constant vector field $X(x_0)$ on $[x_0+\eta,1)$.
\item The vector field $Y$ is $C^\infty$ on $(0,1)$,
\item $\left\| Y_{|[f^{\mp1}(x_0),x_0+\eta]} \right\|_r \leq \eta$, where $f^{\mp1}(x_0)=\min(f^{-1}(x_0),f(x_0))$.
\item The vector field $Y$ does not vanish on $[x_0,x_0+\eta]$.
\end{enumerate}
\end{prop}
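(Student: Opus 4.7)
The plan is to build $Y$ by a standard bump-function interpolation: keep $Y=X$ on $[0,x_0]$, smoothly transition to the constant vector field $X(x_0)$ over a short interval $[x_0,x_0+\rho]$ with $\rho\le\eta$, and then leave $Y\equiv X(x_0)$ on the rest of $[0,1)$. The central ingredient will be Lemma~\ref{lem:regularitypoints}, which provides points $x_0$ arbitrarily close to $0$ and a radius $\rho>0$ such that $\|X_{|[x_0-\rho,x_0+\rho]}\|_r$ is as small as we please; this is what compensates for the fact that $X$ itself is only $C^1$ at $0$. Two other hypotheses enter almost for free: since $X$ is nowhere infinitely flat on $(0,1)$, its zeros there are isolated, so $x_0$ can be chosen with $X$ nonvanishing on $[x_0,x_0+\rho]$; and since $f$ is ITI at $0$, the quantity $|f^{\pm 1}(x_0)-x_0|$ is smaller than any prescribed $\rho$ once $x_0$ is small enough.

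Concretely, I would first fix once and for all a smooth $\chi:\R\to[0,1]$ with $\chi\equiv 0$ on $(-\infty,0]$ and $\chi\equiv 1$ on $[1,\infty)$, and then apply Lemma~\ref{lem:regularitypoints} to pick $x_0\in(0,1-\eta)$, $\rho\in(0,\eta]$ and $\delta>0$ (with $\delta$ small enough relative to $\eta$, $\rho$, $r$ and $\chi$, as specified below) so that $\|X_{|[x_0-\rho,x_0+\rho]}\|_r\le\delta$, $X$ does not vanish on $[x_0,x_0+\rho]$, and $f^{\mp 1}(x_0)\ge x_0-\rho$. Then, setting $\chi_\rho(x):=\chi((x-x_0)/\rho)$, I would define
$$Y(x):=\begin{cases} X(x) & \text{if }x\in[0,x_0],\\ (1-\chi_\rho(x))X(x)+\chi_\rho(x)X(x_0) & \text{if }x\in[x_0,x_0+\rho],\\ X(x_0) & \text{if }x\in[x_0+\rho,1).\end{cases}$$

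Properties~1, 2, 3 and 5 are then essentially immediate: the flatness of $\chi$ at $0$ and $1$ ensures that $Y$ is $C^\infty$ on $(0,1)$ and is smoothly glued at $x_0$ and $x_0+\rho$; on $[x_0,x_0+\rho]$, $Y$ is a convex combination of the same-sign quantities $X(x)$ and $X(x_0)$ and hence does not vanish, and neither does the constant $X(x_0)$ on $[x_0+\rho,x_0+\eta]$. For Property~4 I would split $[f^{\mp 1}(x_0),x_0+\eta]$ into three pieces and check each. On $[f^{\mp 1}(x_0),x_0]\subset[x_0-\rho,x_0]$, $Y=X$ so $\|Y\|_r\le\delta\le\eta$. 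On $[x_0+\rho,x_0+\eta]$, $Y$ equals the constant $X(x_0)$ with $|X(x_0)|\le\delta\le\eta$. On the interpolation interval $[x_0,x_0+\rho]$, expanding $Y^{(k)}$ via the Leibniz formula and using the bounds $|\chi_\rho^{(j)}|\le C_j/\rho^j$, $|X^{(i)}|\le\delta$ and $|X-X(x_0)|\le\delta\rho$ (by the mean value theorem) yields $|Y^{(k)}|\le C'_k(\chi)\,\delta/\rho^{k-1}$ for $1\le k\le r$; choosing $\delta$ small enough that $C'_k(\chi)\,\delta/\rho^{k-1}\le\eta$ for every $k\le r$ closes the estimate.

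The main obstacle is precisely this last $C^r$ estimate on the interpolation interval: differentiating $\chi_\rho$ $k$ times contributes a factor $\rho^{-k}$, so one really needs $X$ to be uniformly small in $C^r$ on a full \emph{neighborhood} of $x_0$ (with smallness scaling as a sufficiently high power of $\rho$), not just pointwise at that one point. This is exactly the content of Lemma~\ref{lem:regularitypoints}, which handles the non-smoothness of $X$ at $0$; once that lemma is available the remaining work is essentially bookkeeping, and in particular the ITI hypothesis on $f$ enters only through the very mild condition $|f^{\pm 1}(x_0)-x_0|\le\rho$.
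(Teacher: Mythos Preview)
There is a genuine gap in the derivative count. Your convex-combination interpolation $Y=(1-\chi_\rho)X+\chi_\rho X(x_0)$ requires $C^r$ control of $X$ on the \emph{whole} interval $[x_0,x_0+\rho]$, and your final estimate $|Y^{(k)}|\le C'_k\,\delta/\rho^{k-1}$ is correct. The problem is that Lemma~\ref{lem:regularitypoints} does \emph{not} let you decouple $\delta$ from $\rho$: it controls $\|X\|_r$ only on $[f^{\mp2}(x_0),f^{\pm2}(x_0)]$, an interval whose length is of order $|f^{\pm1}(x_0)-x_0|$, and the bound it yields is $(|f^{\pm1}(x_0)-x_0|)^{1-\delta'}$. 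Hence you are forced to take $\rho\lesssim |f^{\pm1}(x_0)-x_0|$, and then $\delta\sim\rho^{1-\delta'}$, so that $\delta/\rho^{r-1}\sim\rho^{2-r-\delta'}$, which blows up for every $r\ge2$. The phrase ``with smallness scaling as a sufficiently high power of $\rho$'' in your last paragraph is exactly what the lemma does \emph{not} provide.

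The paper circumvents this by replacing $X$ with its degree-$r$ Taylor polynomial at $x_0$ on the interpolation interval: one sets
\[
Y(x)=X(x_0)+\chi\!\left(\tfrac{x-x_0}{b}\right)\sum_{i=1}^r\frac{D^iX(x_0)}{i!}(x-x_0)^i.
\]
This depends on $X$ only through the values $D^iX(x_0)$, which are bounded by $(|f^{\pm1}(x_0)-x_0|)^{1-\delta'}$, so one is free to take the interpolation length $b=(|f^{\pm1}(x_0)-x_0|)^{2\delta'}\gg |f^{\pm1}(x_0)-x_0|$; the cutoff losses $b^{-(r-1)}$ are then absorbed and one gets $\|Y\|_r\lesssim (|f^{\pm1}(x_0)-x_0|)^{1-(2r+1)\delta'}\to0$ provided $\delta'<\tfrac1{2r+1}$. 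The price is that the gluing of $Y$ with $X$ at $x_0$ is only $C^r$, not $C^\infty$; this is repaired afterwards by a separate Borel-type smoothing step (Lemma~\ref{l:smoothing}). Your idea of interpolating with $X$ itself is more elegant and would make the smoothing step unnecessary, but it does not survive the derivative count for $r\ge2$.
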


\begin{prop}
\label{p:reduc}
Any smooth diffeomorphism $f$ of $[0,1]$ without interior ITI fixed point and with vanishing asymptotic variation is almost reducible. More precisely, for any $r\in\N$ and $\eps>0$, there exists $\f\in\Diff_+^\infty([0,1])$ such that $d_r(\f f \f^{-1},\id)<\eps$ and such that $\f$ coincides with a power of $f$ at the possible ITI boundary points.
\end{prop}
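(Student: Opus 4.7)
My plan is to build, from $f$'s generating $C^1$ vector field $X$, a new vector field $\tilde X$ on $[0,1]$ whose time-$1$ map $\tilde f$ is $C^r$-close to the identity, together with a smooth conjugacy $\f\in\Diff^\infty_+([0,1])$ between $f$ and $\tilde f$ that coincides with a power of $f$ near each ITI boundary point. Since $f$ has no interior ITI fixed point, Takens's theorem ensures that $X$ is $C^\infty$ on $(0,1)$ and smooth up to any non-ITI endpoint of $[0,1]$; its interior zeros are then isolated and can accumulate only at an ITI endpoint.

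For the construction of $\tilde X$, I combine Propositions \ref{l:woITI} and \ref{p:interp-ITI}. Near each ITI endpoint, say $0$, I keep $\tilde X = X$: the time-$1$ map of $\tilde X$ then locally coincides with $f$, which is itself $C^r$-close to $\id$ by ITI, so smallness comes for free in this region. Away from the ITI neighborhoods, $X$ is smooth and has only finitely many zeros (those of $f$ bounded away from the ITI endpoints); I apply Proposition \ref{l:woITI} on each maximal zero-free subinterval, choosing a common large enough dilation ratio $\lambda$. The conjugating diffeomorphisms given by Proposition \ref{l:woITI}, being homotheties of the same ratio $\lambda$ centered at each interior fixed point, glue smoothly across them. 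To interface between the ``preserved'' neighborhood of $0$ and this middle construction, I invoke Proposition \ref{p:interp-ITI}, which produces a point $x_0>0$ arbitrarily close to $0$ (not a zero of $X$) and a $C^1$ interpolation of $X$ that agrees with $X$ on $[0,x_0]$, is $C^r$-small on the short transition interval $[x_0, x_0+\eta]$, and equals the small constant $X(x_0)$ just beyond, which I then blend into the dilated middle construction.

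Once $\tilde X$ is defined, I argue that its time-$1$ map $\tilde f$ is globally $C^r$-close to $\id$, using the ITI smallness of $f$ near the ITI endpoints combined with standard ODE estimates for the flow of a $C^r$-small vector field on the remaining compact pieces. Then I produce the conjugacy $\f$ between $f$ and $\tilde f$ by applying Lemma \ref{l:crit} on each maximal fixed-point-free subinterval of $[0,1]$, prescribing $\f$ to be a power of $f$ in a neighborhood of each ITI endpoint (which is compatible with $\tilde X = X$ there) and a homothety centered at each interior fixed point (matching the homotheties from Proposition \ref{l:woITI}).

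The main obstacle will be to ensure global smoothness of $\f$ at each ITI endpoint. Lemma \ref{l:crit} delivers a conjugacy whose germ at one endpoint is prescribed but whose germ at the other endpoint is the prescribed one composed with a flow-time $f^\sigma$, $\sigma\in\R$; since the flow of $X$ is not smooth at an ITI endpoint, global smoothness forces $\sigma\in\Z$, which is also exactly the condition making $\f$ a power of $f$ there. I would force $\sigma\in\Z$ by tuning the free parameters of the construction: most notably the point $x_0$ of Proposition \ref{p:interp-ITI}, which ranges over a set accumulating at $0$, and the dilation ratio $\lambda$ of Proposition \ref{l:woITI}. The explicit formula $\sigma = \tau_f(x_0,x_1)-\tau_g(\f_0(x_0),\f_1(x_1))$ from Lemma \ref{l:crit} depends continuously on these parameters, so one should be able to hit integer values and thereby achieve the required synchronicity.
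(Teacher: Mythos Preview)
Your overall architecture matches the paper's: keep $X$ near the ITI endpoints, apply Proposition~\ref{l:woITI} on the smooth middle, bridge the two via Proposition~\ref{p:interp-ITI}, and assemble the conjugacy with Lemma~\ref{l:crit}. The gap is precisely at the step you yourself flag as the main obstacle, forcing $\sigma\in\Z$. Neither of your proposed tuning parameters does the job. The admissible points $x_0$ in Proposition~\ref{p:interp-ITI} (ultimately from Lemma~\ref{lem:regularitypoints}) form a set that merely \emph{accumulates} at $0$; nothing says it contains an interval, so the intermediate value theorem is unavailable and there is no reason $\sigma$ should hit an integer as $x_0$ ranges over that set. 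As for $\lambda$: the construction in the proof of Proposition~\ref{l:woITI} already enforces $\int\frac1X=\int\frac1{\f_*X}$ on each interval where it is applied (property~3 there), so changing $\lambda$ leaves the time integrals on those pieces unchanged. Any effect on the global $\sigma$ would have to come through your unspecified ``blending'' region, and you give no argument that this dependence is continuous with range of length at least~$1$.

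The paper's fix is different and worth internalizing. After a candidate $\tilde X$ is built, one introduces an \emph{additional} continuous one-parameter deformation $\tilde X_u = \tilde X - u\,\rho\,X(x_0)$, where $\rho$ is a fixed bump supported on a short plateau where $\tilde X$ is constant equal to $X(x_0)$. One checks that $u\mapsto\sigma(u)$ is continuous and that $\sigma(u)\to-\infty$ as $u\to1^-$ (because $\tilde X_u$ tends to $0$ on a subinterval, so $\int\frac1{\tilde X_u}$ diverges); hence infinitely many integers are hit. When both endpoints are ITI and there are interior fixed points, there are \emph{two} independent synchronicity constraints (one $\sigma$ near $0$ and one $\tau$ near $1$), and the paper uses two such adjustable regions; a single parameter like $\lambda$ could not achieve both simultaneously.
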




We will also need the following classical result:

\begin{lem}
\label{l:flow}
For any $\eps>0$ and any $r\in\N$, there exists $\eta>0$ such that for any $C^r$ vector field~$Y$ on $[0,1]$ vanishing at $0$ and $1$, if $\|Y\|_r<\eta$, its time-$1$ map $g$ satisfies $\|g-\id\|_r<\eps$.
\end{lem}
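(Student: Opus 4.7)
The plan is to use the classical fact that the flow of a $C^r$ vector field depends continuously on the vector field in the $C^r$ topology. Since $Y$ vanishes at both endpoints of $[0,1]$, its flow $(\phi_t)_{t\in\R}$ is defined for all time and preserves $[0,1]$, and $g=\phi_1$. It thus suffices to prove that $\|\phi_t - \id\|_r$ tends to $0$ uniformly in $t\in[0,1]$ as $\|Y\|_r\to 0$, and then specialize at $t=1$.

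I would argue by induction on the order of derivative $k=0,1,\dots,r$, starting from the integral form $\phi_t(x)=x+\int_0^t Y(\phi_s(x))\,ds$. The case $k=0$ is immediate: $\|\phi_t-\id\|_0\le \|Y\|_0$. For $k=1$, differentiating under the integral yields $D\phi_t - 1 = \int_0^t (DY\circ\phi_s)\,D\phi_s\,ds$, and Gronwall's inequality gives $\|D\phi_t-1\|_0\le e^{\|DY\|_0}-1$, which tends to $0$ with $\|Y\|_1$. For $2\le k\le r$, applying Fa\`a di Bruno's formula to $\partial_x^k[Y\circ\phi_s]$ and isolating the top-order term produces an equation of the form
$$\partial_x^k \phi_t(x) = \int_0^t (DY\circ\phi_s)\,\partial_x^k\phi_s(x)\,ds + \int_0^t R_k(s,x)\,ds,$$
where $R_k$ is a polynomial expression in the quantities $(\partial_x^j Y)\circ\phi_s$ for $2\le j\le k$ and $\partial_x^j\phi_s$ for $1\le j\le k-1$. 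The inductive hypothesis makes these lower-order derivatives of $\phi_s$ uniformly bounded once $\|Y\|_{k-1}$ is small, so $\|R_k\|_0\le C_k\|Y\|_r$ for a constant $C_k$ depending only on $r$. Since $\partial_x^k\phi_0=0$ for $k\ge 2$, Gronwall's inequality once again yields $\|\partial_x^k\phi_t\|_0\le C_k\|Y\|_r e^{\|DY\|_0}$, which vanishes with $\|Y\|_r$.

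The main obstacle is not conceptual but combinatorial: one must keep track of the polynomial expression $R_k$ and verify that the inductive control on the lower-order derivatives of $\phi_s$ is sufficient to dominate $\|R_k\|_0$ by a constant multiple of $\|Y\|_r$. Once this bookkeeping is dispatched, combining the above estimates for $k=0,\dots,r$ and setting $t=1$ produces the claimed threshold $\eta=\eta(\eps,r)>0$ such that $\|Y\|_r<\eta$ forces $\|g-\id\|_r<\eps$.
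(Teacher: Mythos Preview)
The paper does not actually prove this lemma; it is introduced with the words ``We will also need the following classical result'' and then simply used. Your argument is a correct and standard proof of this classical fact: the integral form of the flow equation combined with Gronwall and Fa\`a di Bruno is exactly how one establishes continuous dependence of the time-$1$ map on the vector field in the $C^r$ topology. One minor remark: in the inductive step you should note that each term of $R_k$ carries a factor $D^jY$ with $j\ge 2$ (this is what makes $\|R_k\|_0\le C_k\|Y\|_r$ rather than merely $\|R_k\|_0\le C_k$), which you implicitly use but do not state explicitly.
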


\subsection{Proof of Theorem \ref{t:reduc3} in the case of a smooth generating vector field with finitely many zeroes}
\label{ss:easy-case}




Here, we restrict to the case of a smooth diffeomorphism $f$ of the interval $[0,1]$ arising as the time-$1$ map of a smooth vector field $X$ with finitely many zeroes $0=a_0<\dots<a_n=1$. Fix $r\in\N$ and $\eps>0$. For every $i\in[\![0,n-1]\!]$, let $X_i$ denote the restriction of $X$ to $I_i=[a_i,a_{i+1}]$ (and $f_i$ its time-$1$ map, which is the diffeomorphism of $I_i$ induced by $f$). Let us apply Proposition~\ref{l:woITI} to each of these vector fields, with an $\eta>0$ associated to $\eps$ and $r$ as in Lemma \ref{l:flow}, and denote $\lambda:=\max_{0\le i\le n-1}\lambda_{X_i}$. According to Proposition \ref{l:woITI}, for every $i$, there exists a smooth diffeomorphism $\f_i$ of $I_i$ coinciding with a homothety of ratio $\lambda$ near $a_i$ and $a_{i+1}$ and such that $\|(\f_i)_*X_i\|_r<\eta$, which implies $g_i:=\f_i f_i\f_i^{-1}$ is $\eps$-$C^r$-close to the identity by the choice of $\eta$. Define $g$ as the homeomorphism of $[0,1]$ coinciding with $g_i$ on $I_i$ for every $i$. By construction, $g_i$ is smooth on the interiors of the $I_i$'s, and also at the $a_i$'s since there, it is simply conjugated to $f$ by a homothety. Moreover, it is $\eps$-$C^r$-close to the identity. Finally, the $\f_i$'s clearly match up as a smooth diffeomorphism $\f$ of the interval conjugating $f$ to $g$ and coinciding with a homothety of the same ratio at the endpoints, which concludes the proof of Theorem \ref{t:reduc3} in this nice simple case.

\subsection{Reduction of Theorem \ref{t:reduc3} to Proposition \ref{p:reduc} in the general case}
\label{ss:general-case}

Let $f\in\Diff^\infty_+([0,1])$ and fix $\eps>0$ and $r\in\N$. The case where $f$ is nowhere ITI follows directly from Section \ref{ss:easy-case} above, so we assume from now on that $f$ has at least one ITI fixed point. 

\begin{claim}
\label{c:subdiv}
There is a subdivision $0=x_0<\dots<x_n=1$ such that each $x_i$ which is not an endpoint is ITI and for every $i\in[\![0,n-1]\!]$, either $d_r(f_{|[x_i,x_{i+1}]},\id)<\eps$ (interval of type A) or $f$ has no ITI fixed point inside $(x_i,x_{i+1})$ (interval of type B). 
\end{claim}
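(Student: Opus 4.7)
The plan is to partition $[0,1]$ into finitely many closed subintervals, using ITI fixed points of $f$ as interior subdivision points, so that each subinterval is either $C^r$-close to the identity (type A) or contains no ITI fixed point in its interior (type B). Let $K\subset[0,1]$ denote the closed set of points at which $f$ is ITI. If $K=\emptyset$, the whole interval $[0,1]$ is already of type B and there is nothing to prove, so I assume $K\neq\emptyset$.

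First I would produce an open neighborhood $W$ of $K$ in $[0,1]$ on which $f$ is $C^r$-close to $\id$: the continuous functions $f^{(k)}-\id^{(k)}$ vanish identically on $K$ for every $k\le r$, so the open set $W:=\{x\in[0,1]:\max_{k\le r}|f^{(k)}(x)-\id^{(k)}(x)|<\eps/2\}$ contains $K$ and satisfies $d_r(f_{|I},\id)<\eps$ for every closed subinterval $I\subset W$. Next, $[0,1]\setminus W$ is compact and contained in $[0,1]\setminus K$, which decomposes as a disjoint union of relatively open components, each an interval whose endpoints lie in $K\cup\{0,1\}$. By compactness, finitely many of these components, say $C_1,\dots,C_M$, already cover $[0,1]\setminus W$. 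I would declare each closure $\overline{C_l}$ a type B subinterval (the open $C_l$ contains no ITI point by construction), and observe that the complement $F:=[0,1]\setminus\bigcup_l C_l$ is contained in $W$ and is itself a finite disjoint union of closed subintervals, each thus providing a type A subinterval. The required subdivision is then obtained by listing the endpoints of all these subintervals in increasing order as $0=x_0<\dots<x_n=1$: each interior $x_i$ is an endpoint of some $C_l$ strictly inside $(0,1)$, hence lies in $K$ and is ITI.

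The main conceptual point, and the reason this works smoothly, is that one should \emph{not} attempt to use every component of $[0,1]\setminus K$ as a subdivision piece: there could be infinitely many small components accumulating on points of $K$. Only the finitely many components large enough to escape a sufficiently thin neighborhood $W$ of $K$ need appear as type B subintervals; all the other, small components are absorbed into the type A ones. Once this is seen, the argument reduces to a compactness extraction together with bookkeeping on the endpoints.
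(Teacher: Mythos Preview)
Your argument is correct. The key idea — that $f$ is $C^r$-close to the identity on a neighborhood of the ITI set $K$, so only finitely many components of $[0,1]\setminus K$ can meet the ``bad'' region — is precisely the one the paper uses, and your compactness extraction makes it fully explicit.

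The organization differs slightly from the paper's: the authors work from the type-A side, taking the \emph{maximal} intervals with ITI (or boundary) endpoints on which $f$ is $\eps$-close to $\id$, observe these are finitely many, and note that only finitely many ITI points remain outside. You work from the type-B side, selecting the finitely many components of $[0,1]\setminus K$ that escape your neighborhood $W$ and declaring the gaps type~A. The two are dual ways of packaging the same finiteness; your version has the advantage of making the compactness step completely transparent, while the paper's version directly yields maximal type-A pieces. Either produces a valid subdivision.
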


\begin{proof}[Proof of the claim] Consider the maximal disjoint intervals whose boundary points different from~$0$ and $1$ are ITI fixed points and on which the restriction of $f$ is $\eps$-$C^r$-close to the identity. Near an accumulation point of the set of ITI fixed points of $f$, $f$ is (arbitrarily) close to the identity. This implies on the one hand that the former intervals are necessarily in finite number, and that the complement of their union contains at most finitely many ITI fixed points, which easily gives the desired subdivision.
\end{proof}

Let $I_0,\dots,I_{n-1}$ denote the intervals of the subdivision given by Claim \ref{c:subdiv}. We want to define a smooth diffeomorphism $g$ conjugated to $f$ and $\eps$-$C^r$-close to the identity, and we require, in addition, that the conjugacy coincides with a power of $f$ at the possible boundary ITI points. Let us proceed by induction. 
First, if $I_0$ is of type A, we let $g=f$ on $I_0$ (and they are conjugated by the identity, which is a power of $f$ near each boundary point). Otherwise, we apply Proposition~\ref{p:reduc} and again, this gives a conjugacy which is a power of $f$ at each ITI endpoint of $I_0$. Assume we have constructed $g$ on $I=\cup_{j=0}^k I_j=[0,x_{k+1}]$ for some $k<n-1$. Necessarily, $x_{k+1}$ is ITI so by induction, the conjugacy already obtained on $I$ is a power of $f$ near $x_{k+1}$. If $I_{k+1}$ is of type A, we let $g=f$ on $I_{k+1}$. The gluing at $x_{k+1}$ is smooth since $g$ is itself ITI on both sides of $x_{k+1}$ by construction. Furthermore, the restriction of $g$ to $I_{k+1}$ is conjugated to that of $f$ by the identity, but also by any power of $f$, which shows that the new $g$ is smoothly conjugated to $f$ on $\cup_{j=0}^{k+1} I_j$. If $I_{k+1}$ is of type B, one applies Proposition \ref{p:reduc} and again both $g$ and the conjugacies glue up nicely at $x_{k+1}$, and the conjugacy coincides with a power of $f$ near $x_{k+2}$ if $f$ is ITI there. This completes the induction. 

%
%


\subsection{Proof of Proposition \ref{l:woITI}}

Let $\eta, r, X$ be as in the statement, and let $f$ denote the time-$1$ map of $X$. Without loss of generality, we can assume that $X>0$ on $(0,1)$. 
According to Lemma \ref{l:crit} and Formula \eqref{e:tf}, it suffices to show that, for any sufficiently large $\lambda$, one can construct a vector field $Y$ with the following properties:
\begin{enumerate}
\item on $[0,\frac14]$ (resp. $[\frac34,1]$), $Y$ is the push-forward of $X$ by the homothety centered at $0$ (resp.~$1$) and of ratio $\lambda$;
\item $\|Y\|_r<\eta$;
\item for some $x_0\in(0,\frac1{4\lambda})$, 
$$\int_{x_0}^{1-x_0}\frac1X = \int_{\lambda x_0}^{1-\lambda x_0}\frac1Y.$$
\end{enumerate}
Indeed, by Lemma \ref{l:crit}, the time-$1$ map of such a $Y$ will be conjugated to that of $X$ -- and thus $Y$ will be the push-forward of $X$ (cf. Section \ref{ss:gen-vec}) -- by a diffeomorphism $\f$ satisfying all the required properties. \medskip

Let us thus construct such a $Y$. For this, we first need some preparation. Let us pick a smooth map $\rho:[0,1]\to[0,1]$ which is equal to $1$ on $[0,\frac18]\cup[\frac78,1]$, to $0$ on $[\frac14,\frac34]$, and which is decreasing on $[\frac18,\frac14]$ and increasing on $[\frac34,\frac78]$, and let $A:=\|\rho\|_r>0$. Next, for every $\lambda>2$, let $I^0_\lambda = [0,\frac1{4\lambda}]$, $I^1_\lambda=[1-\frac1{4\lambda},1]$,
$J_\lambda=[\frac1{2\lambda},1-\frac1{2\lambda}]$, $K_\lambda=[\frac12-\frac1{4\lambda}, \frac12+\frac1{4\lambda}]$, and let $\f_\lambda$ be a smooth diffeomorphism of $[0,1]$ such that:
\begin{itemize}
\item for $i=0$ and $1$, $\f_\lambda$ coincides with the homothety of ratio $\lambda$ centered at $i$ on $I^i_\lambda$, 
\item $\f_\lambda$ sends $J_\lambda$ to $K_\lambda$ homothetically (and its ratio is thus $(2\lambda-2)^{-1}$),
\item $D\f_\lambda$ is decreasing on $[\frac1{4\lambda},\frac1{2\lambda}]$ and increasing on $[1-\frac1{2\lambda},1-\frac1{4\lambda}]$ (cf. Figure \ref{f:graphphi}).
\end{itemize}

\begin{figure}[ht]
\begin{center}
\includegraphics[scale=0.7]{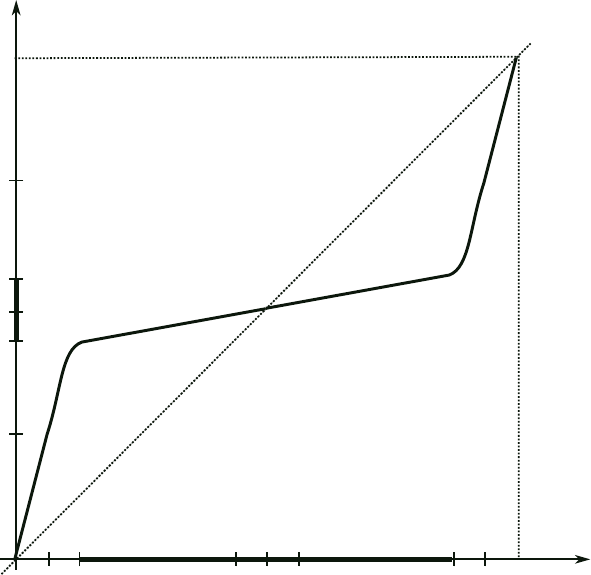}
\put(-112,-5){$\scriptstyle{J_\lambda}$}
\put(-210,88){$\scriptstyle{K_\lambda}$}
\put(-70,86){$\scriptstyle{\varphi_\lambda}$}
\caption{Graph of $\varphi_\lambda$}
\label{f:graphphi}
\end{center}
\end{figure}


For every $\lambda>0$, let $X_\lambda = (\f_\lambda)_*X$. This vector field satisfies properties 1 and 3 (by a simple change of variable for the latter), but not necessarily 2. 
Now given $u \in(0, 
\|X_\lambda\|_0]$, let
\begin{equation}
\label{e:yl}
Y := \rho \, X_\lambda + (1-\rho)u = u+\rho(X_\lambda-u).
\end{equation}
\begin{figure}[ht]
\begin{center}
\includegraphics[scale=1]{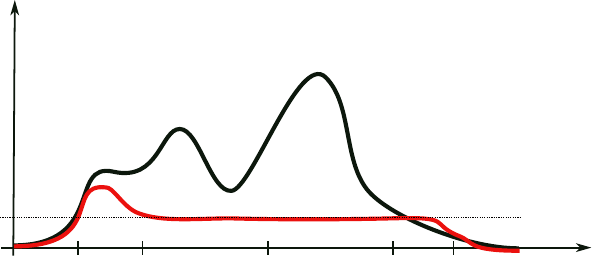}
\put(-142,23){$\scriptstyle{Y}$}
\put(-158,-8){$\scriptstyle{\frac12}$}
\put(-37,-8){$\scriptstyle{1}$}
\put(-292,17){$\scriptstyle{u}$}
\put(-120,86){$\scriptstyle{X_\lambda}$}
\caption{Graphs of $X_\lambda$ and $Y$, for some $u$.}
\label{f:interp}
\end{center}
\end{figure}
This vector field (cf. Figure \ref{f:interp}) still clearly satisfies property 1 above. Now we claim that, for $\lambda$ big enough, whatever the choice of $u$, it also satisfies property 2, and that, for a well chosen $u$, it satisfies property 3.
\medskip

\noindent\textbf{Property 2.} Let us first control $\|X_\lambda\|_0$: 
$$X_\lambda = (\f_\lambda)_*X = (D\f_\lambda\cdot X)\circ \f_\lambda^{-1}$$
so $\|X_\lambda\|_0 =\|D\f_\lambda\cdot X\|_0$. Now on 
$B_\lambda= [0,\frac1{2\lambda}]\cup[1-\frac1{2\lambda},1]$,
$$0\le D\f_\lambda\cdot X \le \lambda X \le \lambda \max_{B_\lambda}X=\lambda \cdot o(\tfrac1\lambda)\xrightarrow[\lambda\to+\infty]{}0$$ 
since $X$ is $C^1$ flat (i.e. $X=DX=0$) at $0$ and $1$. Furthermore, on $[0,1]\setminus B_\lambda$, 
$$0\le D\f_\lambda\cdot X \le \frac1{2\lambda-2}\|X\|_0 \xrightarrow[\lambda\to+\infty]{}0.$$
So in the end, $\|X_\lambda\|_0$ goes to $0$ when $\lambda$ goes to infinity.

Now 
$0\le Y \le \|X_\lambda\|_0$ and, 
for every $s\in[\![0,r]\!]$, 
$$D^sY = \sum_{k=0}^s \binom{s}{k}D^k\rho \times D^{s-k}(X_\lambda-u).$$
First note that $0\le|D^0(X_\lambda-u)|\le \|X_\lambda\|_0$. Next,
on $[0,\frac14]\cup[\frac34,1]$, $X_\lambda(x)=\lambda X(\frac x \lambda)$ so for every $l\in[\![1,s]\!]$,
$$|D^l(X_\lambda-u)(x)|=\lambda^{1-l}|D^lX(\tfrac x \lambda)|\le \lambda^{1-l} \max_{B_\lambda}|D^lX|,$$
and so
$$|D^sY|\le 2^s \|\rho\|_s\times \max(\|X_\lambda\|_0,\max_{B_\lambda}|DX|,\tfrac1\lambda \|X\|_s)$$
which goes to $0$ when $\lambda$ goes to infinity (for the second term of the parenthesis, this comes from the fact that $DX(0)=DX(1)=0$). Since $Y$ is constant on $[\frac14,\frac34]$, we get that, for $\lambda$ big enough, for any $u\in(0,\|X_\lambda\|_0]$, $Y$ does satisfy property 2.\medskip

\noindent\textbf{Property 3.} By change of variables $x\mapsto \f_\lambda(x)$, for $x_0=\frac1{8\lambda}$,
$$\int_{x_0}^{1-x_0}\frac1X = \int_{\lambda x_0}^{1-\lambda x_0}\frac1{X_\lambda}.$$
Now if we pick $u=\|X_\lambda\|_0$, we have $X_\lambda\le Y$ everywhere, so we get 
$$\int_{x_0}^{1-x_0}\frac1X \ge \int_{\lambda x_0}^{1-\lambda x_0}\frac1{Y}.$$
For any $u$ now, 
$$ \int_{\lambda x_0}^{1-\lambda x_0}\frac1{Y} = \int_{\frac18}^{\frac78}\frac1{Y}\ge \int_{\frac12-\frac{1}{4\lambda}}^{\frac12+\frac{1}{4\lambda}}\frac1{Y}=\frac1{2\lambda u}\xrightarrow[u\to0]{}+\infty.$$
Hence, making $u$ vary continuously between $\|X_\lambda\|_0$ and $0$, one will come across a value for which one has exactly the equality of property 3, which concludes the proof.
%
%
%

\subsection{Proof of Proposition \ref{p:interp-ITI}}
\label{ss:interp}

Proposition \ref{p:interp-ITI} directly follows from the combination of Proposition \ref{p:interp} and Lemma \ref{l:smoothing} below, to which this section is devoted. The former is based on Lemma \ref{lem:regularitypoints} below which was proved in~\cite{BE16}.\medskip

Given $f\in\Diff^\infty_+([0,1])$ and $x\in[0,1]$, we let $f^{\pm1}(x)=\max(f(x),f^{-1}(x))$ and $f^{\mp1}(x)=\min(f(x),f^{-1}(x))$.

\begin{lem} \label{lem:regularitypoints}
Consider a diffeomorphism $f \in \mathrm{Diff}_+^{\infty}([0,1])$ which is ITI at $0$ but nowhere in the interior, and which is the time-$1$ map of a $C^1$ vector field $X$ on $[0,1]$ (automatically $C^\infty$ on $(0,1)$). Fix an integer $r \geq 1$ and a real number $\delta >0$. Then there exists $x_0>0$ arbitrarily close to $0$ such that 
$$ \left\| X_{|[f^{\mp2}(x_0), f^{\pm2}(x_0)]} \right\|_r \leq (f^{\pm1}(x_0)-x_0)^{1-\delta}$$
and
$$|X(x_0)| \geq \frac{1}{2}(f^{\pm1}(x_0)-x_0).$$
\end{lem}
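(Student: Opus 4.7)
The plan is to parametrize one flow orbit and translate the lemma into a statement on the parameter. Assume without loss of generality that $f>\id$ on a right neighborhood of $0$ (otherwise restrict to a subinterval between consecutive fixed points of $f$ accumulating at $0$), fix $y_0>0$ small, and set $\psi(t):=f^t(y_0)$. Then $\psi:\R\to\psi(\R)\subset(0,1)$ is $C^\infty$, $\psi(t)\to 0$ as $t\to-\infty$, and it satisfies $\psi(t+1)=f(\psi(t))$ and $\psi'(t)=X(\psi(t))$. Writing $x_0=\psi(t_0)$, the conclusion becomes: find $t_0\to-\infty$ achieving simultaneously (a) $\psi'(t_0)\ge\tfrac12(\psi(t_0+1)-\psi(t_0))$ and (b) $\|X_{|[\psi(t_0-2),\psi(t_0+2)]}\|_r\le(\psi(t_0+1)-\psi(t_0))^{1-\delta}$.

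For (a), the mean value theorem gives for each $t$ some $t_0\in(t,t+1)$ with $\psi'(t_0)=\psi(t+1)-\psi(t)$; since $\psi'$ will be shown to vary slowly across unit windows near $-\infty$, this $t_0$ also satisfies $\psi(t_0+1)-\psi(t_0)\le 2\psi'(t_0)$. For (b), I would use the Faà di Bruno formula iteratively to express $X^{(k)}\circ\psi$ as a polynomial in the normalized ratios $a_j(t):=\psi^{(j+1)}(t)/\psi'(t)$, divided by a power of $\psi'$:
\[
X^{(k)}(\psi(t))=\frac{P_k(a_1(t),\dots,a_k(t))}{\psi'(t)^{k-1}}.
\]
Thus (b) reduces to controlling the $a_j$'s from above and $\psi'$ from below on the short window $[t_0-2,t_0+2]$.

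The key structural fact is that each one-step increment $a_j(t+1)-a_j(t)$ is a polynomial in $\psi'(t)$, $a_1(t),\dots,a_{j-1}(t)$ and in the derivatives $b(\psi(t)),\dots,b^{(j)}(\psi(t))$ of $b:=\log Df$, and it vanishes whenever those derivatives of $b$ vanish; for instance, differentiating $\psi(t+1)=f(\psi(t))$ twice gives $a_1(t+1)-a_1(t)=b'(\psi(t))\psi'(t)$. Since $f$ is ITI at $0$, $b$ is infinitely flat at $0$, so every $b^{(\ell)}(\psi(t))$ is $o((\psi(t+1)-\psi(t))^N)$ for every $N$ as $t\to-\infty$; moreover $a_1(t)=X'(\psi(t))\to X'(0)=0$ automatically from the $C^1$ regularity of $X$ at $0$. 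A double induction---on $j$ from $1$ to $r$, and within each $j$ on the order of differentiation in $t$---combined with a window-averaging/pigeonhole argument over an interval of $t$-length roughly $|\log\psi'(t_0)|$ then produces a common $t_0$ near $-\infty$ where $|a_j(t_0)|\le(\psi(t_0+1)-\psi(t_0))^{1-\delta/r}$ for all $j\le r$. The same argument applied to $\log\psi'$, whose $t$-derivative is $a_1$, gives the lower bound $\psi'(t)\ge c\,\psi'(t_0)$ on $[t_0-2,t_0+2]$.

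The main obstacle is the exponent bookkeeping forced by the denominator $\psi'(t)^{k-1}$ in the Faà di Bruno formula: the smallness of the numerator must more than compensate this denominator, and the bound on $a_j$ feeds into the bound on $a_{j+1}$ with a small loss of exponent at each step. The infinite flatness of $b$ at $0$ provides an arbitrarily large reserve of smallness that absorbs this cumulative loss and permits any exponent strictly smaller than $1$, which is precisely the source of the unavoidable $\delta>0$ in the statement; the length of the averaging window must be tuned to $\delta$ and $r$ in order to balance the number of inductive steps against the flatness available.
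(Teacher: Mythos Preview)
The paper does not actually prove this lemma; it cites \cite{BE16} for the proof (``Lemma~\ref{lem:regularitypoints} below which was proved in~\cite{BE16}''). So there is no in-paper argument to compare against, and what follows is an assessment of your sketch on its own terms.

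Your overall strategy---parametrizing an orbit by the flow time, expressing $X^{(k)}\circ\psi$ via Fa\`a di Bruno in terms of the ratios $a_j=\psi^{(j+1)}/\psi'$, exploiting the infinite flatness of $b=\log Df$ at $0$, and locating a good $t_0$ by an averaging/pigeonhole over a $t$-window of length comparable to $|\log\psi'|$---is the right framework and is indeed how such statements are proved. However, the exponent bookkeeping you flag as ``the main obstacle'' is not carried out correctly. With your normalization, $P_k$ contains $a_k$ \emph{linearly}: for instance $\psi'''=X''(\psi)\psi'^2+X'(\psi)^2\psi'$ gives $X''(\psi)=(a_2-a_1^2)/\psi'$, so $P_2=a_2-a_1^2$. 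Hence to obtain $|X^{(k)}\circ\psi|\le\psi'^{\,1-\delta}$ one needs $|a_k|\lesssim\psi'^{\,k-\delta}$, not merely $|a_k|\le\psi'^{\,1-\delta/r}$. With the bound you claim, already $|X''\circ\psi|\le(|a_2|+a_1^2)/\psi'$ is only controlled by $\psi'^{-\delta/r}$, which diverges. The inductive scheme must therefore propagate bounds of the shape $|a_j|\lesssim\psi'^{\,j(1-\delta')}$ (equivalently, work with $\tilde a_j=\psi^{(j+1)}/\psi'^{\,j}$), and the increment identities and the pigeonhole must be set up so that the \emph{exponent grows with $j$}. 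This is a substantive change to the induction, not a cosmetic one, and your sketch does not address it.

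A secondary gap: the reduction ``otherwise restrict to a subinterval between consecutive fixed points of $f$ accumulating at $0$'' does not by itself yield points $x_0$ arbitrarily close to $0$, since a single flow orbit in such a subinterval accumulates on its left endpoint rather than on $0$. When interior (non-ITI) fixed points accumulate at $0$, one must run the argument in a \emph{sequence} of such subintervals approaching $0$, using that the flatness of $b$ at $0$ makes all the needed derivatives uniformly small there; this requires a little more care than the single-orbit parametrization suggests.
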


\begin{prop}[$C^{r}$ interpolation for isolated ITI fixed points]
\label{p:interp}
Let $\eta >0$ and $r \geq 1$. Under the hypothesis of Lemma \ref{lem:regularitypoints}, there exists a non fixed point $x_0\in(0,1-\eta)$ arbitrarily close to $0$ and a $C^1$ vector field $Y:[0,1] \rightarrow \mathbb{R}$ with the following properties.
\begin{enumerate}
\item $Y_{|[0,x_0]}=X_{|[0,x_0]}$.
\item The vector field $Y$ is equal to the constant vector field $X(x_0)$ on $[x_0+\eta,1]$.
\item The vector field $Y$ is $C^\infty$ on $(0,x_0]$ and $[x_0,1)$ and $C^r$ on $(0,1)$.
\item $\left\| Y_{|[f^{\mp1}(x_0),x_0+\eta]} \right\|_r \leq \eta$.
\item The vector field $Y$ does not vanish on $[x_0,x_0+\eta]$.
\end{enumerate}
\end{prop}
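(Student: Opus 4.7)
The plan is to use Lemma \ref{lem:regularitypoints} to locate a point $x_0>0$ arbitrarily close to $0$ where $X$ is very small together with its first $r$ derivatives on a surrounding interval of length of order $L := f^{\pm1}(x_0) - x_0$, and then to perform an explicit smooth cutoff interpolating $X$ to the constant vector field $X(x_0)$ over a scale $\ell \in (0, \eta]$ to be tuned carefully.

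More precisely, I would fix a small $\delta \in (0, 1/r)$ (say $\delta = 1/(2r)$) and apply Lemma~\ref{lem:regularitypoints} to obtain $x_0$ with $L$ as small as needed and such that $\|X\|_{C^r([f^{\mp 2}(x_0), f^{\pm 2}(x_0)])} \le L^{1-\delta}$ and $|X(x_0)| \ge L/2$. I would then fix a smooth bump $\chi:\R \to [0,1]$ equal to $1$ on $(-\infty, 1/2]$ and vanishing flatly on $[3/4, \infty)$, and set
\[ Y(x) = \begin{cases} X(x), & x \in [0, x_0], \\ X(x_0) + \chi\bigl((x-x_0)/\ell\bigr)\bigl(X(x) - X(x_0)\bigr), & x \in [x_0, x_0 + \ell], \\ X(x_0), & x \in [x_0 + \ell, 1]. \end{cases} \]
Properties 1 and 2 are immediate. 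For property 3, the key observation is that since $\chi \equiv 1$ near $0$, $Y$ actually coincides with $X$ on a one-sided neighborhood of $x_0$, so $Y$ is $C^\infty$ (in particular $C^r$) across $x_0$, and the flatness of $\chi$ at $1$ makes $Y$ smooth across $x_0 + \ell$ as well.

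The heart of the argument is the choice of $\ell$ that simultaneously yields properties 4 and 5. A Leibniz expansion, combined with the bounds $|D^m X| \le L^{1-\delta}$ on the relevant interval and the linear estimate $|X - X(x_0)| \le \ell L^{1-\delta}$ on $[x_0, x_0+\ell]$, will give $|D^k Y| \lesssim \ell^{1-k} L^{1-\delta}$ for $1 \le k \le r$, so property 4 will demand $\ell^{r-1} \gtrsim L^{1-\delta}/\eta$. On the other hand, $|Y - X(x_0)| \le \ell L^{1-\delta}$ combined with $|X(x_0)| \ge L/2$ gives property 5 as soon as $\ell \lesssim L^\delta$. The main obstacle will be to show that these two constraints on $\ell$ can be reconciled for $L$ small: substituting the upper bound into the lower one reduces the compatibility to a single inequality of the form $L^{1 - \delta r} \lesssim \eta$, and since $\delta < 1/r$ makes the exponent of $L$ strictly positive, this will hold as soon as $x_0$ is chosen close enough to $0$. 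The $C^r$ control on the rest of $[f^{\mp 1}(x_0), x_0 + \eta]$ is then automatic, from $\|X\|_{C^r} \le L^{1-\delta}$ on the left of $x_0$ and from $Y$ being the constant $X(x_0)$ (of size $\le L^{1-\delta}$) to the right of $x_0 + \ell$.
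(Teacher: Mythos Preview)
There is a genuine gap. Your interpolation formula involves $X(x)$ on the whole interval $[x_0,x_0+\ell]$, so both the Leibniz estimate and the mean-value bound $|X(x)-X(x_0)|\le \ell\,L^{1-\delta}$ implicitly require $\|X\|_r\le L^{1-\delta}$ to hold on that interval. But Lemma~\ref{lem:regularitypoints} only provides this control on $[f^{\mp2}(x_0),f^{\pm2}(x_0)]$, an interval of length comparable to $L$ around $x_0$. This forces the hidden extra constraint $\ell\lesssim L$. Combined with your lower bound $\ell^{r-1}\gtrsim L^{1-\delta}/\eta$, you would need $L^{r-1}\gtrsim L^{1-\delta}/\eta$, i.e.\ $L^{r-2+\delta}\gtrsim 1/\eta$, which blows up as $L\to 0$ for every $r\ge 2$. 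So for $r\ge2$ the two constraints on $\ell$ cannot be reconciled, and the scheme breaks down exactly where it matters.

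The paper avoids this by not interpolating $X$ itself but its $r$-th order Taylor polynomial at $x_0$: for $x\ge x_0$ one sets
\[
Y(x)=X(x_0)+\chi\!\left(\tfrac{x-x_0}{b}\right)\sum_{i=1}^{r}\tfrac{D^iX(x_0)}{i!}(x-x_0)^i,
\]
with $b=L^{2\delta}$ and $\delta<\tfrac{1}{2r+1}$. The point is that this formula depends only on the values $D^iX(x_0)$ at the single point $x_0$, all bounded by $L^{1-\delta}$, so the cutoff can live on an interval of length $b\gg L$ without ever needing $C^r$ control of $X$ away from $x_0$. The price is that $Y$ now only agrees with $X$ to order $r$ at $x_0$, which is precisely why property~3 only claims $C^r$ regularity across $x_0$ (and why a separate smoothing step is needed afterwards).
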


The third condition implies in particular that $Y$ has right and left derivatives of any order at~$x_0$. 

\begin{proof}
Let $\chi:[0,+\infty) \rightarrow \mathbb{R}$ be a $C^{\infty}$ function which is equal to $1$ on $[0,\frac{1}{2}]$ and to $0$ on $[1,+\infty)$ with $0 \leq \chi \leq 1$. Let $M_r=\left\| \chi \right\|_r$.
Take $\delta < \frac{1}{2r+1}$. Take $x_0$ such that Lemma \ref{lem:regularitypoints} holds (without loss of generality, assume $f(x_0)>x_0$) and small enough so that
$$(f(x_0)-x_0)^\delta < \eta$$
$$(f(x_0)-x_0)^{1- \delta}+ r2^r M_r (f(x_0)-x_0)^{1-(2r+1)\delta} < \eta$$
$$ \frac{1}{2}(f(x_0)-x_0) >(f(x_0)-x_0)^{1-\delta} (e^{(f(x_0)-x_0)^{2\delta}}-1).$$
For the last inequality, observe that the right-hand side is equivalent to $(f(x_0)-x_0)^{1+\delta}$ when $x_0$ goes to $0$. 
Take $b=(f(x_0)-x_0)^{2\delta}$.
For any $0 \leq i \leq r$, let $a_i=D^i X(x_0)$.
We define $Y$ by setting $Y(x)=X(x)$ if $x \leq x_0$ and
$$Y(x)=a_0+ \chi \left(\frac{x-x_0}{b} \right)\sum_{i=1}^{r}\frac{a_i}{i!}(x-x_0)^i$$
if $x \geq x_0$.
It is clear that the vector field $Y$ satisfies the three first properties. Let us now deal with property 4.

For any $1\leq i \leq r$, let $f_i$ be defined for $y \in \mathbb{R}$ by
$$f_i(y)=\frac{a_i}{i!} \delta^i \chi \left(\frac{y}{b}\right).$$
Then for any $0 \leq k \leq r$
$$D^kf_i(y)=\sum_{j=0}^{i} \binom{k}{j}  \frac{a_i}{(i-j)!}\frac{y^{i-j}}{b^{k-j}} D^{k-j}\chi \left(\frac{y}{b}\right).$$
Hence, as this function is supported in $[0,b]$,
$$\begin{array}{rcl}

\left\| D^k f_i \right\|_{\infty} & \leq & \displaystyle \sum_{j=0}^{i} \binom{k}{j}  |a_i|b^{i-k} M_r \\
 & \leq & 2^{k} M_r (f(x_0)-x_0)^{1-(2k+1)\delta} \\
 & \leq & 2^{r} M_r (f(x_0)-x_0)^{1-(2r+1)\delta}.
\end{array}$$
From this and from our choice of $x_0$ we deduce the fourth point.\medskip

It remains to check the last point. If $Y$ vanishes at some point $x_1$, then $x_1=x_0+y_1$ belongs to $[x_0,x_0+b]$. As $Y(x_1)=0$,
$$ a_0=-\sum_{i=1}^{k} f_i(y_1)$$
so that
$$\begin{array}{rcl}
\frac{1}{2}(f(x_0)-x_0) & \leq & |a_0| \\
 & \leq & \displaystyle \sum_{i=1}^{k} |f_i(y_1)| \\
 & \leq & \displaystyle \sum_{i=1}^{k} \frac{1}{i!} (f(x_0)-x_0)^{1+(2i-1) \delta} \\
 & \leq & (f(x_0)-x_0)^{1-\delta} (e^{(f(x_0)-x_0)^{2\delta}}-1),
\end{array}$$ 
which is not possible by our choice of $x_0$.

\end{proof}

\begin{lem}[Smoothing]
\label{l:smoothing}
Let $x_0 \in (0,1)$, $r \geq 1$ and $\alpha\in(0,1-x_0)$. Let $Y$ be a $C^r$ vector field on $(0,1)$ which is $C^{\infty}$ on $(0,x_0]$ and on $[x_0,1)$. Then there exists a $C^{\infty}$ vector field $Z$ on $(0,1)$ such that $Z=Y$ outside $(x_0,x_0+\alpha)$ and $\left\|Z-Y \right\|_r < \alpha$.
\end{lem}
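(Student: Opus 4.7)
The plan is to modify $Y$ only inside a small window $(x_0, x_0 + \beta)$ with $\beta \in (0, \alpha]$ to be fixed later, by replacing it with a smooth interpolation between $Y$ and a smooth extension across $x_0$ of its left restriction. First, by a classical extension theorem (Seeley's extension, or Borel's theorem combined with a smooth prolongation of a preassigned Taylor series), one produces a $C^\infty$ vector field $Y_L$ on $(0,1)$ which coincides with $Y$ on $(0, x_0]$. Next, fix a smooth cutoff $\psi : \R \to [0, 1]$ equal to $1$ on $(-\infty, 1/3]$ and to $0$ on $[2/3, +\infty)$, set $\psi_\beta(x) = \psi((x - x_0)/\beta)$, and define
$$Z := \psi_\beta \, Y_L + (1 - \psi_\beta) \, Y.$$

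On $(0, x_0]$ the two ingredients coincide, so $Z = Y$ there; on $[x_0, x_0 + \beta/3]$ we have $Z = Y_L$, which is $C^\infty$; on $[x_0 + 2\beta/3, 1)$ we have $Z = Y$, which is $C^\infty$ on $[x_0, 1)$ by assumption; and on $[x_0 + \beta/3, x_0 + 2\beta/3]$, $Z$ is a smooth convex combination of the two smooth vector fields $Y_L$ and $Y|_{[x_0, 1)}$. Hence $Z$ is $C^\infty$ on $(0,1)$ and coincides with $Y$ outside $(x_0, x_0 + \beta) \subset (x_0, x_0 + \alpha)$, as required.

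To control $\|Z - Y\|_r = \|\psi_\beta (Y_L - Y)\|_r$, observe that $Y_L - Y$ is $C^\infty$ on $[x_0, x_0 + \alpha]$ and, since $Y$ is $C^r$ at $x_0$ while $Y_L$ agrees with $Y$ on the left, all its derivatives of order at most $r$ vanish at $x_0$. Taylor's formula then yields $|D^j(Y_L - Y)(x)| \leq C \, |x - x_0|^{r+1-j}$ on $[x_0, x_0 + \alpha]$ for every $0 \leq j \leq r$, with a constant $C$ independent of $\beta$. Combined with the standard bound $\|D^j \psi_\beta\|_\infty \leq c_j \, \beta^{-j}$, the Leibniz rule gives, for every $0 \leq k \leq r$ and every $x \in [x_0, x_0 + \beta]$,
$$|D^k(Z - Y)(x)| \leq \sum_{j=0}^{k} \binom{k}{j} \|D^j \psi_\beta\|_\infty \cdot C \, \beta^{r+1-(k-j)} \leq C' \beta^{r+1-k} \leq C' \beta,$$
where $C'$ depends only on $\psi$, $r$ and $C$. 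Choosing $\beta$ small enough that $C' \beta < \alpha$ then yields $\|Z - Y\|_r < \alpha$.

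The only non-elementary input is the existence of the $C^\infty$ extension $Y_L$ of $Y|_{(0, x_0]}$ past $x_0$, which is the step where a black-box classical result intervenes; everything else reduces to a direct Leibniz computation that exploits the $C^r$-tangency of $Y_L$ and $Y$ at $x_0$, the key structural consequence of the hypothesis.
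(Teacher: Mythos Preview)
Your proof is correct. Both your argument and the paper's rest on the same core idea---Borel's theorem---but they deploy it differently. The paper works ``additively'': it computes the jumps $a_i = D^iY(x_0^-) - D^iY(x_0^+)$ for $i>r$, builds by hand a compactly supported $C^\infty$ function $z$ with $D^i z(0)=a_i$ and $\|z\|_r\le\alpha$ via the standard Borel sum $\sum_{i>r}\tfrac{a_i}{i!}y^i\chi(y/b_i)$, and sets $Z=Y+z(\cdot-x_0)$ on the right. You instead black-box the Borel step inside Seeley's extension $Y_L$, then interpolate multiplicatively with a cutoff and recover the $C^r$-smallness from the Taylor estimate exploiting the $r$-th order tangency of $Y_L$ and $Y$ at $x_0$. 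Your route is slightly more conceptual and reusable (the Taylor/Leibniz computation is a standard pattern); the paper's is self-contained and avoids invoking an external extension theorem. Either way the substance is the same.
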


The proof of this lemma is an adaptation of the proof of Borel's theorem about the existence of a $C^{\infty}$ function with prescribed derivatives of any order at a point.

\begin{proof}
We use the same function $\chi$ as in the previous proof. For any $k$, we let $M_k=\left\| \chi \right\|_k$.
For $i >r$, let
$$a_i= \lim_{y \rightarrow 0^{+}} D^i Y(x_0-y)-D^i Y(x_0+y)$$
and
$$g_i(v)=\frac{a_{i}}{i!}y^{i}\chi\left(\frac{y}{b_i}\right).$$
We will show that, for well chosen $b_i$'s and for any $k$, the series of the $k$th derivatives of the $g_i$'s converge uniformly so that $\sum_{i=r+1}^{+\infty}g_i$ defines a $C^{\infty}$ function with $C^{r}$ norm smaller than $\alpha$ and support in $[0,\alpha]$.

Take $b_i=\frac{1}{2}$ if $a_i=0$ and
$$b_i= \frac{1}{2} \min \left(\frac{\alpha}{|a_i|2^{r}M_r},\alpha \right)$$
otherwise. For any $k \leq i$ and $v \in [0,1]$, 
$$\begin{array}{rcl}
|D^k g_i(y)| & = & \displaystyle \left| \sum_{j=0}^{k} \binom{k}{j}\frac{a_i y^{i-j}}{(i-j)! b_{i}^{k-j}}D^{k-j}\chi\left(\frac{y}{b_i}\right) \right| \\
& \leq & 2^{k} |a_i| |b_i|^{i-k} M_k \\
& \leq & 2^{k-r}  \frac{\alpha}{2^{i-k}} \frac{M_{k}}{M_r}.
\end{array}$$
This proves that the series $\displaystyle \sum_i D^{k} g_i$ converges uniformly. Let
$$ z(y)= \sum_{i=r+1}^{+\infty}g_{i}(y).$$
As a consequence of the above estimates, $z$ is well defined on $[0,1]$, $C^{\infty}$ and $\left\| z\right\|_r \leq \alpha$. Moreover, $z$ is supported in $[0,\alpha]$ and satisfies, for any $k \geq r+1$, $D^{k}z(0)=a_i$.
It suffices then to take the vector field $Z$ on $(0,1)$ defined by
$$Z(x)=\left\{ \begin{array}{lll} Y(x) & \text{if} & x \leq x_0 \\
  Y(x)+z(x-x_0)&\text{if} &x \geq x_0.
\end{array} \right.$$
\end{proof}

\subsection{Proof of Proposition \ref{p:reduc}}
\label{ss:reduc}

In this subsection, we derive Proposition \ref{p:reduc} from Lemma \ref{l:woITI} and Proposition \ref{p:interp-ITI}.\medskip

 The case without ITI fixed point on the boundary (and thus with a finite number of non ITI fixed points) has been dealt with in Section \ref{ss:easy-case}. So we now focus on the case where $f$ is a smooth diffeomorphism of $[0,1]$ ITI at  some boundary point, say $0$ (and possibly at $1$) but nowhere in the interior, and generated by a $C^1$ vector field $X$. We will treat the case where $f$ is also ITI at~$1$ (which is the most difficult and contains all the ideas), and leave the other case as an exercise. \medskip
 
Let $\eps>0$ and $r\in\N$, and let $\eta>0$ be as in Lemma \ref{l:flow} with respect to these $\eps$ and $r$. We want to build a vector field $\tilde X$ smoothly conjugated to $X$ and satisfying $\|\tilde X\|_r<\eta$ (it will, in addition, coincide with $X$ near the endpoints and the conjugacy will coincide with the identity near $0$ and with a power of $f$ near $1$). \medskip
 
 \noindent\textbf{Preparation near the boundary.}   Fix a step function $\chi$ equal to $0$ on $[0,\frac38]$ and $1$ on $[\frac58,1]$, a bump function $\rho$ equal to $0$ outside $[\frac14,\frac34]$ and to $1$ on $[\frac5{16},\frac{11}{16}]$ and let 
 \begin{equation}
 \eta'=\frac\eta3 \min\left(1,\frac1{\|\rho\|_r},\frac1{2\|\chi\|_r}\right).
 \end{equation} 
Applying Proposition~\ref{p:interp-ITI} to $X$ with this $\eta'$ instead of $\eta$, we get a point $x_0$ and a new vector field $Y$ with properties 1 to 5 of Proposition \ref{p:interp-ITI}. One may assume that $x_0+\eta'<\frac14$, and that $f$ is $\eps$-$C^r$-close to the identity on $[0,f^{\pm1}(x_0)]$. \medskip

Exchanging the roles of $0$ and $1$, another application of Proposition \ref{p:interp-ITI} gives us a point $x_1$, with $\frac34<x_1-\eta'$ and $f$ $\eps$-$C^r$-close to the identity on $[f^{\mp1}(x_1),1]$, and a vector field $Z$ satisfying the following analogs of properties 1 to 5:
\begin{itemize}
\item[1'.] $Z_{|[x_1,1]}=X_{|[x_1,1]}$.
\item[2'.] The vector field $Z$ is equal to the constant vector field $X(x_1)$ on $[0,x_1-\eta']$.
\item[3'.] The vector field $Z$ is $C^\infty$ on $(0,1)$,
\item[4'.] $\left\| Z_{|[x_1-\eta',f^{\pm1}(x_1)]} \right\|_r \leq \eta'$.
\item[5'.] The vector field $Z$ does not vanish on $[x_1-\eta',x_1]$.
\end{itemize}

\noindent\textbf{Case 1: $f$ has no fixed point between $x_0$ and $x_1$}.\medskip

 Define a vector field $\tilde X$ which coincides with $Y$ on $[0,\frac14]$, with $Z$ on $[\frac34,1]$ and such that, for $x\in[\frac14,\frac34]$, 
$$\tilde X(x) = X(x_0)+ \chi(x) (X(x_1)-X(x_0))\quad \text{(cf. Figure \ref{f:Xtilde})}.$$
\begin{figure}[ht]
\begin{center}
\includegraphics[scale=1]{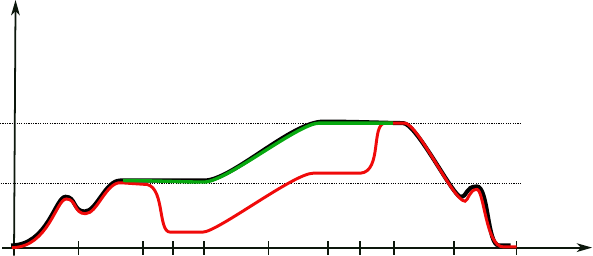}
\put(-142,23){$\scriptstyle{\tilde X_u}$}
\put(-158,-8){$\scriptstyle{\frac12}$}
\put(-98,-8){$\scriptstyle{\frac34}$}
\put(-219,-8){$\scriptstyle{\frac14}$}
\put(-206,-8){$\scriptstyle{\frac5{16}}$}
\put(-189,-8){$\scriptstyle{\frac38}$}
\put(-115,-8){$\scriptstyle{\frac{11}{16}}$}
\put(-130,-8){$\scriptstyle{\frac58}$}
\put(-37,-8){$\scriptstyle{1}$}
\put(-309,32){$\scriptstyle{X(x_0)}$}
\put(-309,61){$\scriptstyle{X(x_1)}$}
\put(-120,71){$\scriptstyle{\tilde X}$}
\caption{Graphs of $\tilde X$ and $\tilde X_u$, for some $u$.}
\label{f:Xtilde}
\end{center}
\end{figure}

By construction, $\tilde X$ is $C^\infty$ on $(0,1)$.
To fix ideas, let us assume from now on that $0<X(x_0)\le X(x_1)$, so that $\tilde X$ increases from $X(x_0)>0$ to $X(x_1)$ on $[x_0+\eta',x_1-\eta']\supset[\frac14,\frac34]$. In particular, if $a=\max\{x\le x_0;X(x)=0\}$ and $b=\min\{x\ge x_1, X(x)=0\}$, $\tilde X$ vanishes neither on $(a,x_0]\cup[x_1,b)$ where it coincides with $X$, nor on $[x_0,x_0+\eta']\cup[x_1-\eta',x_1]$ according to properties 5 and 5', nor on $[x_0+\eta',x_1-\eta']$. 
Hence, by Lemma \ref{l:crit} (applied to the time-$1$ maps of $X$ and $\tilde X$), $\tilde X$ is the push-forward of $X$ on $[a,b]$ by a $C^1$ diffeomorphism coinciding with $\id$ on $[a,x_0]$ and with some flow map $f^\sigma$ of $X$ on $[x_1,b]$ (and this conjugacy extends to $[0,1]$ by $\id$ on $[0,a]$ and $f^\sigma$ on $[b,1]$). 

To conclude the proof of Proposition \ref{p:reduc} in this case, we need to modify a little the definition of $\tilde X$ so that this $\sigma$ is an integer, and to check that its time-$1$ map is $\eps$-$C^r$-close to the identity. \medskip

\noindent\textbf{Adjustment of $\sigma$.} Note that, by Lemma \ref{l:crit}, this $\sigma$ is simply given by 
$$\sigma = \int_{x_0}^{x_1}\frac1{X} - \int_{x_0}^{x_1}\frac1{\tilde X}$$
We thus need to modify $\tilde X$ so as to keep all its good properties but ensure in addition that the above number is an integer. 
For this, let us consider a whole family $\tilde X_u$ coinciding with $\tilde X$ on $[0,\frac14]\cup[\frac34,1]$ and such that, for $x\in[\frac14,\frac34]$, 
$$\tilde X_u(x) = \tilde X(x)
-u\rho(x)X(x_0)\quad \text{(cf. Figure \ref{f:Xtilde})}.$$

For $u\in[0,1)$, these smooth vector fields, which coincide with $\tilde X$ outside $[\frac14,\frac34]$, still have no fixed point in $[\frac14,\frac34]$ since $\tilde X$ is bounded below by $X(x_0)$ there. However, by definition of $\chi$ and $\rho$, on $[\frac5{16},\frac38]$, $\tilde X_u(x)=X(x_0)(1-u)$, so when $u$ goes to $1$, $\sigma = \int_{x_0}^{x_1}(\frac1X-\frac1{\tilde X_u})$ goes to $-\infty$ in a continuous way, 
so there are infinitely many choices of $u$ for which $\sigma$ is an integer. \medskip

\noindent\textbf{$C^r$-control on the time-$1$ map $\tilde f$.} Recall we have assumed, to fix idea, that $X>0$ on $[x_0,x_1]$ so $f>\id$ there. On $[0,x_0]$ (resp. $[x_1,1]$), $\tilde X_u = X$, so on $[0,f^{-1}(x_0)]$, (resp. $[f(x_1),1]$), $f=\tilde f$, which was assumed $\eps$-$C^r$-close to the identity there. By definition of $\eta$, it remains to check that $\|\tilde X_u\|_r<\eta$ on $[x_0,f(x_1)]$. Now by definition, on this interval,
$$\|\tilde X_u\|_r \le \|\tilde X\|_r + u |X(x_0)|\cdot\|\rho\|_r.$$
By property 4 of $Y$, $|X(x_0)|=|Y(x_0)|\le \eta'$ so by choice of $\eta'$, the second summand is smaller than $\eta/3$. As for the first one, still on the same interval,
$$\|\tilde X\|_r \le |X(x_0)|+|X(x_1)-X(x_0)|\cdot\|\chi\|_r \le \eta'+2\eta' \|\chi\|_r\le \frac{2\eta}3$$
again by the choice of $\eta'$, which gives the desired control on $\tilde X_u$ and thus concludes the proof in \emph{Case 1}.\medskip

\noindent\textbf{Case 2: $f$ does have fixed points between $x_0$ and $x_1$}.\medskip
\noindent\textbf{General idea.} The ``preparation'' is the same as in case 1, yielding vector fields $Y$ and $Z$, but this time we cannot simply connect them monotonously from $(x_0+\eta,X(x_0))$ to $(x_1-\eta,X(x_1))$ because, roughly, unlike in the previous case, the restrictions to $[x_0,x_1]$ of $X$ and the resulting vector field $\tilde X$ would not be conjugate: the former would have zeroes while the latter wouldn't. 

Let $c$ be the smallest zero of $X$ greater than $x_0$ and $d$ the biggest one smaller than $x_1$. 
Assume for a while that $x_0+\eta<c-\eta<d+\eta<x_1-\eta$ (1) and that $X$ is already $C^r$-small near $[c,d]$ (2). Then one could hope to interpolate between $Y$ on $[0,x_0+\eta]$ and $X$ near $c$, and between $X$ near $d$ and $Z$ on $[x_1-\eta,1]$. But the resulting vector field won't necessarily be itself $C^r$-small on $[x_0+\eta,x_1-\eta]$ unless the room available to interpolate is sufficient (3) compared to the control we have on $X$. The argument below is designed precisely to take care of points (1), (2), (3) (cf.~Figure \ref{f:Xtildeb} for an illustration of the process).   \medskip

\begin{figure}[htbp]
\begin{center}
\includegraphics[scale=1]{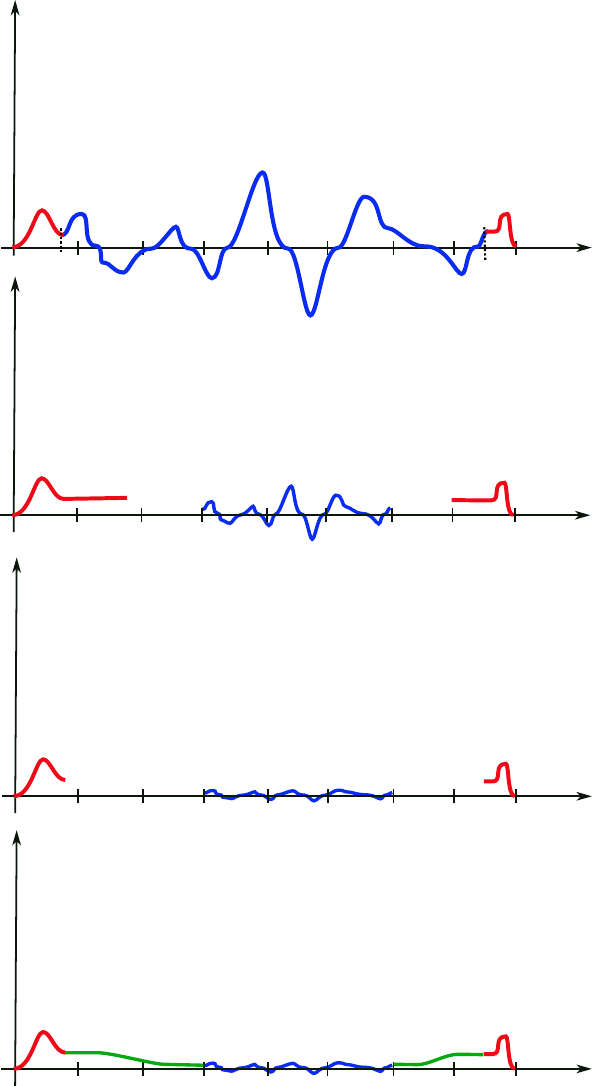}
\put(-53,392){$\scriptstyle{x_1}$}
\put(-59,407){$\scriptstyle{d}$}
\put(-240,407){$\scriptstyle{c}$}
\put(-260,392){$\scriptstyle{x_0}$}
\put(-130,422){$\scriptstyle{X}$}
\put(-40,26){$\scriptstyle{\tilde X}$}
\put(-142,150){$\scriptstyle{\bar X}$}
\put(-142,295){$\scriptstyle{h_* X}$}
\put(-262,295){$\scriptstyle{Y}$}
\put(-38,295){$\scriptstyle{Z}$}
\put(-158,-8){$\scriptstyle{\frac12}$}
\put(-98,-8){$\scriptstyle{\frac34}$}
\put(-219,-8){$\scriptstyle{\frac14}$}
\put(-189,-8){$\scriptstyle{\frac38}$}
\put(-130,-8){$\scriptstyle{\frac58}$}
\put(-37,-8){$\scriptstyle{1}$}
\caption{From the graph of $X$ to that of $\tilde X$.}
\label{f:Xtildeb}
\end{center}
\end{figure}

Let $h:[x_0,x_1]\to [\frac38,\frac58]$ be a homothety of ratio denoted by $\mu$ and consider the smooth vector field $h_*X$ on $[\frac38,\frac58]$, which vanishes on $\{c',d'\}=\{h(c),h(d)\}$ and has no zero outside $[c',d']$.

Applying Proposition \ref{l:woITI} to ${h_*X}_{|[c',d']}$, we get a smooth vector field $\bar X$ on $[c',d']$ which is $\eta'$-$C^r$-small and conjugated to ${h_*X}_{|[c',d']}$ by a diffeomorphism which is a homothety of some ratio $\lambda$ near $c'$ and $d'$. Actually, proceeding just like in the proof of Proposition~\ref{l:woITI}, we can get slightly more : an $\bar X$ on $[\frac38,\frac58]$ which is $C^r$-$\eta'$-small, constant near $\frac38$ and $\frac58$ and smoothly conjugated to $h_*X$ near $[c',d']$ by a diffeomorphism which is a homothety of some ratio $\lambda$ near $c'$ and $d'$. %

At this point, we have basically reduced to the case where (1), (2) and (3) are satisfied.
Namely, we can now use a step function $\bar\chi$ (just like in case 1) to define a vector field $\tilde X$ which coincides with $Y$ on $[0,\frac14]$, $\bar X$ on $[\frac38,\frac58]$ and $Z$ on $[\frac34,1]$, and which is still $\eta$-$C^r$-small provided $\eta'$ was chosen small enough in terms of $\|\bar\chi\|_r$. We already know that the restriction of $\tilde X$ to $[c',d']$ is conjugated to the restriction of $X$ to $[c,d]$ by a diffeomorphism which coincides with a homothety of ratio $\mu\lambda$ near the boundary. Furthermore, by the criterion of conjugacy (Lemma~\ref{l:crit}), one easily checks that
\begin{itemize}
\item the restriction of $\tilde X$ to $[0,c']$ is conjugated to the restriction of $X$ to $[0,c]$ by a diffeomorphism which coincides with a homothety of ratio $\mu\lambda$ near $c$ and with some flow-map $f^\sigma$ of $X$ near $0$;
\item its restriction to $[d',1]$ is conjugated to the restriction of $X$ to $[d,1]$ by a diffeomorphism which coincides with a homothety of ratio $\mu\lambda$ near $d$ and with some flow-map $f^\tau$ near $1$.
\end{itemize}

These three conjugacies glue up to give a full conjugacy between $X$ and $\tilde X$, but as before, we would like $\sigma$ and $\tau$ to be integers. This can be ensured by modifying slightly the definition of $\tilde X$ just like in case 1, more precisely by bringing $\tilde X$ closer to $0$ in the ``green region'' of Figure \ref{f:Xtildeb}, as we did for the green region of Figure \ref{f:Xtilde}. The proof of the $C^r$-$\eps$-closeness to $\id$ of the time-1 map of $\tilde X$ works just like in case 1 as well.\medskip


\subsection{Nonzero rational rotation number} \label{ss:rational}

\begin{prop} 
\label{p:cases}
Let $f$ be a smooth circle diffeomorphism with rotation number $\frac p q\in\Q/\Z$, with $p$ and $q$ coprime, and with vanishing asymptotic variation.  Then:
\begin{enumerate}
\item if $f^q$ has no ITI fixed point, $f$ is smoothly conjugated to a diffeomorphism of the form $g\circ R_{p/q}$ where $g$ is a smooth diffeomorphism fixing $0$, without ITI fixed point, commuting with $R_{p/q}$ and with vanishing asymptotic variation;
\item if $f^q$ has an ITI fixed point, $f$ is smoothly conjugated to a diffeomorphism of the form $h\circ R_{p/q}$ such that $h=\id$ except on $R_{-\frac p q}([0,\frac1q])$ and $h$ has vanishing asymptotic variation.
\end{enumerate}
\end{prop}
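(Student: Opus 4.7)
Both cases share a common strategy. First pick a period-$q$ orbit $\Sigma=\{y_0,\dots,y_{q-1}\}$ of $f$, cyclically ordered on $\Ss^1$ with $f(y_k)=y_{k+p}$ (indices mod $q$); in case 2, require the $y_k$'s to be ITI fixed points of $f^q$, which exist by assumption and automatically form an $f$-orbit. Then build a smooth $\phi\in\Diff^\infty_+(\Ss^1)$ sending each $y_k$ to $k/q$ by prescribing $\phi_k:=\phi|_{[y_k,y_{k+1}]}$ via an explicit recurrence that propagates a single ``seed arc'' around $\Sigma$ using $f$ and $R_{p/q}$. The algebraic form desired for $\phi f\phi^{-1}$ (either $gR_{p/q}$ with $g$ commuting with $R_{p/q}$, or $hR_{p/q}$ with $h$ supported in a single arc) translates into a cycle of $q$ relations between $\phi_k$ and $\phi_{k+p}$; keeping $q-1$ of them and giving up the last is precisely the structure demanded by the proposition.

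In case 1, requiring $g=\phi f\phi^{-1}R_{-p/q}$ to commute with $R_{p/q}$ becomes, after setting $\psi_k:=(R_{-k/q}\phi_k)^{-1}$, the recurrence $\psi_{k+p}=f\psi_k F^{-1}$ for some fixed diffeomorphism $F$ of $[0,1/q]$. Since $f^q$ has no ITI fixed point, Takens' theorem (\S\ref{ss:gen-vec}) yields a smooth generating vector field $X$ for $f^q$; its time-$1/q$ map $\tilde F$ is a smooth $q$th root of $f^q$ commuting with $f$ and fixing each $y_k$, and $F:=\psi_0^{-1}\tilde F\psi_0$ then satisfies $F^q=\psi_0^{-1}f^q\psi_0$, so the recurrence closes consistently after $q$ iterations. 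The seed $\psi_0:[0,1/q]\to[y_0,y_1]$ must be chosen so that $\phi$ glues smoothly at each $y_k$, which amounts to an infinite tower of $\infty$-jet relations between $\psi_0$ at $0$ and at $1/q$. These are mutually compatible thanks to parabolicity of $f^q$ at each $y_k$ (the first-order example being the cyclic identity $\prod_k Df(y_k)=Df^q(y_0)=1$), and a suitable $\psi_0$ is produced by Borel's theorem. One then checks that $g^q=\phi f^q\phi^{-1}$ inherits the absence of ITI fixed points (hence so does $g$), that $g$ fixes $0$, and that $V_\infty(g)=V_\infty(gR_{p/q})=V_\infty(f)=0$ using commutativity of $g$ with $R_{p/q}$.

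In case 2, requiring $h=\phi f\phi^{-1}R_{-p/q}$ to be supported in $R_{-p/q}([0,1/q])$ becomes the recurrence $\phi_{k+p}=R_{p/q}\phi_k f^{-1}$ for every $k$ except one. Starting from an arbitrary smooth seed $\phi_{-p}:[y_{-p},y_0]\to R_{-p/q}([0,1/q])$ and iterating $q-1$ steps defines all other $\phi_k$, and by construction $h$ acquires the prescribed support. Smoothness of $\phi$ at each $y_k$ reduces again to $\infty$-jet matching on the two endpoints of the seed; this time the infinite tower of constraints collapses to trivial identities at every order, because $f^q$ has the same $\infty$-jet as the identity at each $y_k$, and Borel's theorem again provides a suitable $\phi_{-p}$. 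The vanishing of $V_\infty(h)$ follows from the disjoint-support identity $(hR_{p/q})^q=\prod_{k=0}^{q-1}R_{kp/q}hR_{-kp/q}$ (the factors commute and their supports cyclically cover $\Ss^1$), which yields $V_\infty(h)=V_\infty(hR_{p/q})=V_\infty(f)=0$. The main obstacle throughout is verifying the compatibility of the infinite tower of jet-matching conditions ensuring smoothness of $\phi$ across all orbit points: settled by parabolicity of $f^q$ in case 1, and by the ITI hypothesis in case 2.
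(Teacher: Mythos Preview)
Your Case 2 is essentially the paper's argument: pick an ITI periodic orbit, choose a seed $\psi$ on one fundamental arc, propagate by $\phi|_{R^k(I)} = f^k\psi R^{-k}$, and verify smoothness at the orbit points using the ITI hypothesis to trivialize the jet-matching. The disjoint-support computation you give for $V_\infty(h)$ is correct.

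In Case 1, however, the paper takes a much shorter route that avoids building the conjugacy by hand. First, it invokes Takens \emph{together with} \cite[Theorem~A]{EN21} --- not Takens alone, as you write --- to obtain a global smooth vector field $X$ with time-$1$ map $f^q$: Takens only gives local smooth vector fields near each non-ITI fixed point, and the vanishing of the asymptotic variation is precisely what forces the Szekeres vector fields on adjacent arcs to agree. This is where the hypothesis $V_\infty(f)=0$ enters the construction, not just the conclusion. Next, the paper \emph{proves} (via a Kopell-type uniqueness argument for $q$th roots with the same fixed set) that the time-$1/q$ map $g$ of $X$ commutes with $f$; you simply assert this. The punchline is then one line: $h:=fg^{-1}$ satisfies $h^q=f^qg^{-q}=\id$, so by the classification of finite-order circle diffeomorphisms $h$ is smoothly conjugate to $R_{p/q}$, and under that conjugacy $f$ becomes $\tilde g\, R_{p/q}$ with $\tilde g$ commuting with $R_{p/q}$. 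No seed, no recurrence, no jet-matching.

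Your direct construction in Case 1 has a further soft spot. The claim that the $q$ jet-matching conditions are ``mutually compatible thanks to parabolicity of $f^q$'' is not justified: parabolicity only controls the first derivative, whereas in the non-ITI case all higher jets of $f$ at the $y_k$ are nontrivial and enter every matching condition. What would actually make the recurrence close consistently is the pair of facts $\tilde F^q=f^q$ and $f\tilde F=\tilde F f$ --- and once you have those, the paper's $h^q=\id$ shortcut makes the remaining work unnecessary.
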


\begin{proof}[Proof of Proposition \ref{p:cases}] \emph{Case 1}. The diffeomorphism $f^q$ is a circle diffeomorphism with fixed points, all non-ITI, so they are in finite number. We claim that it has a smooth $q$th root with the same fixed points ($f$ is a $q$th root but without fixed point). Indeed, given a fixed point $a$ of $f^q$, if $a_-$ and $a_+$ are the fixed points of $f^q$ closest to $a$ on both sides (which may coincide if $q=2$), it follows from Takens \cite{Ta} that, on $(a_-,a_+)$, $f^q$ is the time-$1$ map of a unique smooth vector field. Now recall we assumed the asymptotic variation of $f^q$ vanishes. According to \cite[Theorem A]{EN21}, this implies that the aforementioned vector fields coincide on the intersection of their domains of definition, so that $f^q$ is the time-$1$ map of a smooth vector field $X$ on the whole circle. Taking $g$ as the time-$1/q$ map $\phi_X^{\frac{1}{q}}$ of this vector field, we get the desired $q$th root, whose fixed points are exactly those of $f^q$ and are not ITI either.

Let us prove that $g$ commutes with $f$. Indeed, we claim first that, if $f^q$ has a $q$th root $g_1$ with the same set of fixed points, the diffeomorphism $g_1$ is equal to $g$. Take a point $a$ of the circle which is not fixed and denote by $(a_-,a_+)$ the connected component of the complement of the set of fixed points which contains the point $a$. By transitivity of the flow on $(a_-,a_+)$, there exists $t_0$ such that $g_1(a)=\phi^{t_0}_X(a)$. But then, the diffeomorphism $g_1\circ\phi^{-t_0}_X$ commutes with $f^q$ and fixes the point $a$. By Kopell's Lemma, 
the diffeomorphism $g_1$ has to coincide with $\phi^{t_0}_X$ on $(a_-,a_+)$. As the diffeomorphism $g_1$ is a $q$-th root of $f$, $g_1=\phi^{\frac{1}{q}}_X=g$ on any component of the complement of fixed points, hence on the whole circle. So $fgf^{-1}$, which satisfies $(fgf^{-1})^q=f f^q f^{-1}=f^q$ and fixes the periodic points of $f$, just like $f^q$ and $g$, must coincide with $g$.

Now let $h=f\circ g^{-1}$. Note that $g$ commutes with $h$, and that $h$ has the same periodic orbits as $f$, so that, in particular, they have the same rotation number. Furthermore, $h^q=f^qg^{-q}=\id$, so $h$ is smoothly conjugated to the rotation $R_{p/q}$. Denoting by $\tilde f$ and $\tilde g$ the conjugates of $f$ and $g$ by the corresponding conjugation, the relation $h=fg^{-1}$ becomes $R_{p/q}=\tilde f \circ \tilde g^{-1}$, so $\tilde f=R_{p/q}\circ \tilde g$, and $\tilde g$ commutes with $R_{p/q}$ and has no ITI fixed point, as required. \medskip

\noindent \emph{Case 2.} Without loss of generality, we can assume that $0$ is an ITI fixed point of $f^q$ and that its orbit under $f$ is the same as its orbit under the rotation $R_{p/q}$, that we will simply denote by $R$ in what follows. Let $I=[0,\frac1q]$ and let us show that, up to a smooth conjugacy, one can assume that
\begin{equation}
\label{e:R1n}
f = R \text{ on } \mathbb{S}^1\setminus f^{-1}(I)\quad \text{and}\quad f 
= f^q\circ R
\text{ on }f^{-1}(I)=R^{-1}(I).
\end{equation} 
Indeed, if $u\in[\![1,q-1]\!]$ is such that $f^u(I)=R^u(I)=[\frac1q,\frac2q]$, we claim that any smooth diffeomorphism $\psi$ of $I=[0,\frac1q]$ which is ITI at $0$ and whose derivatives at $\frac1q$ coincide with that of $f^u$ at $0$ extends in a unique way to a smooth conjugacy $\f$ with the desired behavior. 
The requirement that $\f^{-1}f\f=R$ outside $f^{-1}(I)$, 
or equivalently that $\f = f \f R^{-1}$ outside $I$, 
determines $\f$ unambiguously (step by step) outside $I$ from its restriction $\psi$ to $I$ 
(namely, for $k\in[\![0,q-1]\!]$, $\f= f^k\psi R^{-k}$ on $R^k(I)$). This defines a piecewise smooth homeomorphism, which induces a smooth diffeomorphism of $R^k(I)$ for every $k\in[\![0,q-1]\!]$. 
If we let $\tilde f = \f^{-1}f\f$ on the whole circle, by construction, $\tilde f=R$ outside $f^{-1}(I)=f^{q-1}(I)$ and, on this interval,
$$ \tilde f 
= \psi^{-1} f ( f^{q-1}\psi R^{-q+1})
= (\f^{-1}f^q\f )R 
=\tilde f^q R$$
as required. Note that the ITI hypothesis on $f^q$ implies that $\tilde f$ is smooth.

We still need to check that $\f$ is a smooth diffeomorphism. 
To do this, first note that the boundary condition on $\psi$ ensures that the restrictions of $\f$ to $[0,\frac1q]$ 
and $[\frac1q,\frac2q]$ glue in a smooth way at $\frac1q$. 
Indeed, if $J_-$ and $J_+$ denote the left and right infinite jets, then 
$$J_+\f(\tfrac1q)=J_+(f^u \psi R^{-u})(\tfrac1q)=J_+(f^u\psi)(0)=J_+(f^u)(0)=J_-\psi(\tfrac1q)=J_-\f(\tfrac1q).$$
Now for every $k\in[\![0,q-1]\!]$, near $R^k(\frac1q)$, $\f=f^k\f\tilde f^{-k}$, and $\tilde f^{-k}$, $\f$ and $f^k$ are smooth near $R^k(\frac1q)$, $\tilde f^{-k}(R^k(\frac1q))=\frac 1q$ and $\f(\frac1q)=\frac1q$ respectively, so $\f$ is smooth near $R^k(\frac1q)$, with positive first derivative, which concludes the proof that $\f$ defines a smooth diffeomorphism of the circle.
\end{proof}

\begin{proof}[Proof of extended Theorem \ref{t:reduc2}] According to Proposition \ref{p:cases}, we only need to show that in each of the following cases, $f$ is quasi-reducible:
\begin{enumerate}
\item $f=R_{p/q}\circ g=g\circ R_{p/q}$ with $g$ a smooth diffeomorphism fixing $0$, with vanishing asymptotic variation and without ITI fixed point;
\item $f=h\circ R_{p/q}$ with $h$ a smooth diffeomorphism with vanishing asymptotic variation and equal to $\id$ outside $R_{-p/q}([0,\frac1q])$ .
\end{enumerate} 
In the first case, according to Theorem \ref{t:reduc3}, $g_{|[0,\frac1q]}$ can be conjugated arbitrarily close to the identity by a smooth diffeomorphism $\psi$ of $[0,\frac1q]$ which coincides with a homothety of the same ratio near $0$ and $\frac1q$. Then $\psi$ extends in a unique way to a smooth diffeomorphism $\f$ of the circle commuting with $R_{p/q}$, and $\f f \f^{-1}=R_{p/q}\f g \f^{-1}$, where $\f g \f^{-1}$ is close to the identity on $[0,\frac1q]$ and commutes with $R_{p/q}$ so is close to the identity everywhere, which concludes the proof in this case.\medskip

In the second case, according to Theorem \ref{t:reduc3}, $h_{|[0,\frac1q]}$ can be conjugated arbitrarily close to the identity by a smooth diffeomorphism $\psi$ of $[0,\frac1q]$ which is ITI at $0$ and $\frac1q$. Again, $\psi$ extends in a unique way to a smooth diffeomorphism $\f$ of the circle commuting with $R_{p/q}$, and $\f f \f^{-1}=\f h \f^{-1}R_{p/q}$, where $\f h \f^{-1}$ is close to the identity on $[0,\frac1q]$ and equal to the identity elsewhere, which concludes the proof.\medskip

\end{proof}

\section{Local perfection results}
\label{s:loc-perf}

This section is devoted to the proof of a key ingredient of Theorem \ref{Thm:conjugacyanddistortion}, namely, the ``local perfection'' of the topological groups it involves.

\subsection{Local fragmented perfection for diffeomorphisms of the circle}



This section is devoted to the following unpublished theorem by Avila (see \cite{Av08}). We provide a proof for completeness' sake.

Let $J_1$ and $J_2$ be two open intervals of $\mathbb{S}^1$ which cover the circle $\mathbb{S}^1$.

\begin{thm}[Local fragmented perfection] \label{Thm:locfragperf}
For any $\eta'>0$, there exists $\eta>0$ such that, for any  diffeomorphism $f$ in $\mathrm{Diff}^{\infty}_+(\mathbb{S}^1)$ with $d_{\infty}(f,Id) < \eta$, there exist two families of diffeomorphisms $(g_i)_{1 \leq i \leq 4}$ and $(g'_i)_{1 \leq i \leq 4}$ with the following properties.
\begin{enumerate}
\item $g_1,g'_1,g_4,g'_4$ are supported in $J_1$ while $g_2,g'_2,g_3,g'_3$ are supported in $J_2$.
\item For any $i$, $d_{\infty}(g_i,Id) <\eta'$ and $d_{\infty}(g'_i,Id) <\eta'$.
\item $f=[g_1,g'_1][g_2,g'_2][g_3,g'_3][g_4,g'_4].$
\end{enumerate}
\end{thm}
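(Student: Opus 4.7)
The plan is to combine a standard fragmentation of diffeomorphisms with the classical Mather--Epstein displacement trick, propagating quantitative closeness to the identity through each step. The two main ingredients are: (i) a decomposition of $f$ into pieces supported alternately in $J_1$ and $J_2$, each close to the identity whenever $f$ is; and (ii) an expression of each single-arc piece of suitably small support as a commutator of near-identity elements supported in the same arc. The four-commutator count with pattern $[J_1][J_2][J_2][J_1]$ should come from a four-piece fragmentation $f = u_1 v_1 v_2 u_2$, each piece handled by a single commutator.

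\textbf{Step 1: Fragmentation.} Choose closed arcs $K_1 \subset J_1$ and $K_2 \subset J_2$ with $K_1 \cup K_2 = \mathbb{S}^1$, and a smooth bump function $\chi : \mathbb{S}^1 \to [0,1]$ equal to $1$ on $K_1$ and supported in a compact subset of $J_1$. For $f$ sufficiently close to the identity, the formula $f_1(x) = x + \chi(x)(f(x) - x)$ in local coordinates on $J_1$, extended by the identity outside, defines a smooth diffeomorphism supported in $J_1$ and close to the identity. Then $f_2 := f_1^{-1} f$ is supported in $J_2$ (since $f_1 = f$ on $K_1$) and also close to the identity. A further application of this procedure, using two more bump functions, refines the decomposition as $f = u_1 v_1 v_2 u_2$, with $u_1, u_2 \in \mathrm{Diff}^{\infty}_+(\mathbb{S}^1)$ supported in disjoint sub-arcs of $J_1$ and $v_1, v_2$ in disjoint sub-arcs of $J_2$, each of the four pieces being close to the identity with support in a prescribed small sub-arc of its ambient $J_i$.

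\textbf{Step 2: Single-commutator expression.} For a smooth diffeomorphism $h$ with small compact support $L$ in an open arc $J$, close to the identity, I would pick an auxiliary $\psi \in \mathrm{Diff}^{\infty}_c(J)$ close to the identity, pushing points monotonically toward one endpoint of $J$, in such a way that the iterated images $\{\psi^n(L)\}_{n \geq 0}$ are pairwise disjoint and accumulate at this endpoint. Then
\[
H := \prod_{n \geq 0} \psi^n h \psi^{-n}
\]
is a well-defined smooth diffeomorphism of $J$, and a telescoping computation $\psi H \psi^{-1} = \prod_{n \geq 1} \psi^n h \psi^{-n} = h^{-1} H$ yields $h = [H, \psi]$. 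Since the supports of the factors of $H$ are disjoint, $H$ is $C^\infty$-close to the identity at the same order as $h$, and both factors of the commutator are supported in $J$. Applying this to each of the four fragments $u_1, v_1, v_2, u_2$ produces the desired decomposition with the prescribed support pattern.

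\textbf{Main obstacle.} The heart of the proof is the quantitative control in Step 2: one must pick $\psi$ close to the identity \emph{and} ensure that the resulting product $H$ is close to the identity at all orders $r$ with constants uniform in $f$. Each higher derivative of $H$ involves the iterates $\psi^n$ near the accumulation endpoint of $J$, which distort the ``copies'' $\psi^n h \psi^{-n}$; showing $\|H - \mathrm{id}\|_{C^r} = O(\|h\|_{C^r})$ uniformly, while keeping $\|\psi - \mathrm{id}\|_{C^r}$ below $\eta'$, is precisely what forces the pieces $u_i, v_i$ of Step 1 to have small support within their ambient arcs, which explains the need for four commutators rather than two. Extracting an explicit $\eta(\eta')$ then amounts to tracing through these estimates and choosing $\psi$ so that its contraction rate near the relevant endpoint balances the required smoothness $r$ and target closeness $\eta'$.
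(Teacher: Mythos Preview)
Your Step~2 has a gap that no amount of estimate-chasing can close: for $h\neq\id$, the infinite product $H=\prod_{n\ge0}\psi^n h\psi^{-n}$ fails to be even $C^1$ at the accumulation point $p$ of the supports $\psi^n(L)$. Suppose it were. The identity $h=[H,\psi]$ rewrites as $H\psi H^{-1}=h\psi$; restricting to a one-sided neighborhood $I'$ of $p$ disjoint from $L$ (and invariant under both $\psi$ and $H$), where $h=\id$, we see that $H$ commutes with $\psi$ on $I'$. Now $\psi$ has no interior fixed point in $I'$, while $H$ fixes every point in the nonempty gaps between consecutive $\psi^n(L)$, so Kopell's lemma forces $H=\id$ on $I'$ --- contradicting $H|_{\psi^n(L)}=\psi^n h\psi^{-n}\neq\id$ for all large $n$. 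Thus $H$ is never a $C^1$ diffeomorphism, let alone $C^\infty$ and close to the identity. The ``main obstacle'' you isolate is therefore not a matter of balancing a contraction rate against a target regularity: no choice of near-identity $\psi$ makes the construction work. This is precisely why the telescoping commutator trick, while adequate for the \emph{perfection} of $\Diff^\infty$, does not yield its \emph{local} (quantitative) version.

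The paper's proof is of a completely different nature and rests on Herman's KAM-type conjugacy theorem. Using Lemma~\ref{Lem:compactlysupportedcommutators}, one builds from flows supported in $J_1$, together with their conjugates into $J_2$, a one-parameter family $\xi_t=[\phi^t_X,\phi^t_Y]\cdot h[\phi^t_X,\phi^t_Y]h^{-1}$ of products of two small commutators satisfying $\tfrac{d}{dt}\xi_t(x)>0$ for all $x$ and small $t>0$, and one fixes $t_0$ with $\rho(\xi_{t_0})$ Diophantine. For $f$ close enough to $\id$, the intermediate value theorem gives $t_1$ near $t_0$ with $\rho(\xi_{t_1}f)=\rho(\xi_{t_0})$, and Herman's theorem then produces a near-identity $h'$ with $\xi_{t_1}f=h'\xi_{t_0}(h')^{-1}$. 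Solving for $f$ yields the four commutators with the required support pattern, the smallness of all eight factors coming from smooth conjugacy theory for Diophantine rotations rather than from any infinite-product construction.
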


To prove this theorem, we use the following lemma.

\begin{lem} \label{Lem:compactlysupportedcommutators}
Let $J$ be an open interval whose closure is contained in $(0,1)$. There exist vector fields $X$ and $Y$ supported in $(0,1)$ such that, for any sufficiently small $t>0$,
\begin{enumerate}
\item for any $x$ in $J$, $\frac{d}{dt}\left([\phi^t_X,\phi^{t}_Y](x) \right)>0$;
\item for any $x$ in $(0,1)$, $\frac{d}{dt}\left([\phi^t_X,\phi^{t}_Y](x) \right) \geq 0$.
\end{enumerate} 
\end{lem}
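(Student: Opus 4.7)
We construct $X$ and $Y$ explicitly. Let $\rho:\R\to[0,+\infty)$ be a smooth bump function with support in $(0,1)$ and $\rho>0$ on an open interval $U$ containing $\bar J$. Set $X=\rho\,\partial_x$ and $Y=x\rho\,\partial_x$; both are supported in $(0,1)$. A direct computation gives $[X,Y]=\rho^2\,\partial_x$, so the Lie bracket is non-negative on $(0,1)$ and strictly positive on $U\supset \bar J$.

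The standard Taylor expansion of the commutator of two flows around the identity yields
$$[\phi^t_X,\phi^t_Y](x)=x+t^2\,\rho(x)^2+t^3\,R(t,x),$$
with $R$ smooth in $(t,x)$ on a neighborhood of $\{0\}\times[0,1]$. Differentiating in $t$ and using $\rho^2\geq\rho_0^2>0$ uniformly on the compact set $\bar J$, the leading term dominates and
$$\tfrac{d}{dt}[\phi^t_X,\phi^t_Y](x)=2t\,\rho(x)^2+O(t^2)>0$$
for all $t\in(0,t_0)$ and $x\in\bar J$, with $t_0>0$ small, establishing (1).

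For (2), first note that outside the support of $\rho$ (and at its endpoints, which are common fixed points of $\phi^t_X$ and $\phi^t_Y$), the commutator is exactly the identity and the derivative vanishes. The central point is a divisibility lemma: every iterated Lie bracket of $X$ and $Y$ has coefficient divisible by $\rho^2$. This follows by induction from
$$[X,\rho^2 W\,\partial_x]=\rho^2(\rho'W+\rho W')\,\partial_x,\qquad [Y,\rho^2 W\,\partial_x]=\rho^2(x\rho'W-\rho W+x\rho W')\,\partial_x.$$
A Hadamard-type argument then upgrades this formal divisibility into an honest smooth factorization $R(t,x)=\rho(x)^2\,\tilde R(t,x)$, so that
$$\tfrac{d}{dt}[\phi^t_X,\phi^t_Y](x)=\rho(x)^2\,t\bigl(2+t\,\tilde R(t,x)\bigr),$$
with $\tilde R$ smooth and bounded uniformly in $x$. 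For $t$ small (independently of $x$), the factor in parentheses is positive, so the whole expression has the sign of $\rho^2\geq 0$. This yields (2), strictly positive on $J$ (where $\rho>0$) and non-negative on all of $(0,1)$.

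\textbf{Main obstacle.} The delicate step is passing from the formal divisibility of each Taylor coefficient by $\rho^2$ to an actual smooth factorization of the commutator: one must show that the quotient $\bigl([\phi^t_X,\phi^t_Y](x)-x\bigr)/(t^2\rho(x)^2)$ extends smoothly across the boundary of the support of $\rho$, in spite of the flatness of $\rho$ there. This Hadamard-type step is the main technical hurdle.
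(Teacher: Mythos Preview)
Your proposal has two issues, one minor and one serious.

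\medskip\noindent\textbf{Sign error.} With the group commutator $[\phi^t_X,\phi^t_Y]=\phi^t_X\phi^t_Y\phi^{-t}_X\phi^{-t}_Y$ used in the paper, the second-order expansion reads
\[
[\phi^t_X,\phi^t_Y](x)=x+t^2\bigl(DX\cdot Y - DY\cdot X\bigr)(x)+O(t^3)=x-t^2[X,Y](x)+O(t^3)
\]
for the usual vector-field bracket (check it on $X=\partial_x$, $Y=x\partial_x$). For your choice $X=\rho$, $Y=x\rho$ one gets $DX\cdot Y-DY\cdot X=-\rho^2$, so the leading term is $-t^2\rho^2$ and the $t$-derivative is \emph{negative} on $J$. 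This is trivially repaired by swapping $X$ and $Y$.

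\medskip\noindent\textbf{The ``Hadamard-type'' step is a genuine gap.} Your argument for item~2 hinges on the factorization $R(t,x)=\rho(x)^2\tilde R(t,x)$ with $\tilde R$ smooth and bounded. You correctly show that every iterated Lie bracket of $X$ and $Y$ has coefficient divisible by $\rho^2$, so each Taylor coefficient of $c(t,x)-x$ in $t$ is. But this \emph{formal} divisibility does not yield an honest bounded factorization: the Hadamard lemma lets one divide by coordinate-like functions, not by a function like $\rho^2$ that vanishes to infinite order at the boundary of its support. Two flat functions need not have a bounded quotient (e.g.\ $e^{-1/x}/e^{-2/x}$), so the mere fact that $c(t,x)-x$ is flat at $\partial\,\mathrm{supp}(\rho)$ gives no control whatsoever on $(c(t,x)-x)/\rho^2$. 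You rightly flag this as the main hurdle, but no argument is supplied and there is no off-the-shelf result that does the job; as written, item~2 is not proved.

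\medskip\noindent\textbf{How the paper avoids this.} The paper takes a different and more robust route: it chooses $X$ and $Y$ with \emph{distinct}, merely overlapping, supports --- $X$ supported in $(a_1,1)$, $Y$ in $(0,b_1)$, with $\bar J\subset(a_1,b_1)$ --- together with sign and monotonicity constraints ($X,Y\ge0$, $DX\ge0$ and $DY\le0$ on the overlap). Then $DX\cdot Y-DY\cdot X\ge0$ on all of $(0,1)$ and $>0$ on $J$ by inspection, and outside the overlap one of the two flows is trivial along the relevant orbits, so the commutator can be analysed directly without ever dividing by a flat function. This construction is simpler and sidesteps precisely the analytic obstacle your approach runs into.
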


\begin{rem} Observe that the derivative at $t=0$ of $[\phi^t_X,\phi^{t}_Y](x)$ has to vanish at any $x$ so that the first above condition can hold only for $t>0$.
\end{rem}

\begin{proof}
A straightforward computation shows that, for any vector fields $X$ and $Y$ of $[0,1]$ and any point $x \in [0,1]$,
$$\frac{d^2}{dt^2}\left( [\phi^t_X,\phi^{t}_Y](x) \right)_{t=0}=2(DX(x)Y(x)-DY(x) X(x)).$$

Hence it suffices to choose the vector fields $X$ and $Y$ in such a way that $(DX) Y- (DY) X>0$ on $J=(a,b)$ and $(DX) Y- (DY) X\geq 0$ elsewhere.
To do that (see Figure \ref{f:graphX}), fix real numbers $a_1, b_1$ such that $0<a_1<a<b<b_1<1$. 

Take a vector field $X$ supported in $(a_1,1)$ such that $X \geq 0$ on $[0,1]$, $DX\geq 0$ on $(a_1,b_1)$ and $DX >0$ on $[a,b_1]$.
Take a vector field $Y$ supported in $(0,b_1)$ such that $Y \geq 0$ on $(0,1)$, $DY\leq 0$ on $(a_1,b_1)$  and $DY <0$ on $[a,b_1]$. The vector fields $X$ and $Y$ clearly satisfy the required properties.

\begin{figure}[ht]
\begin{center}
\includegraphics[scale=0.5]{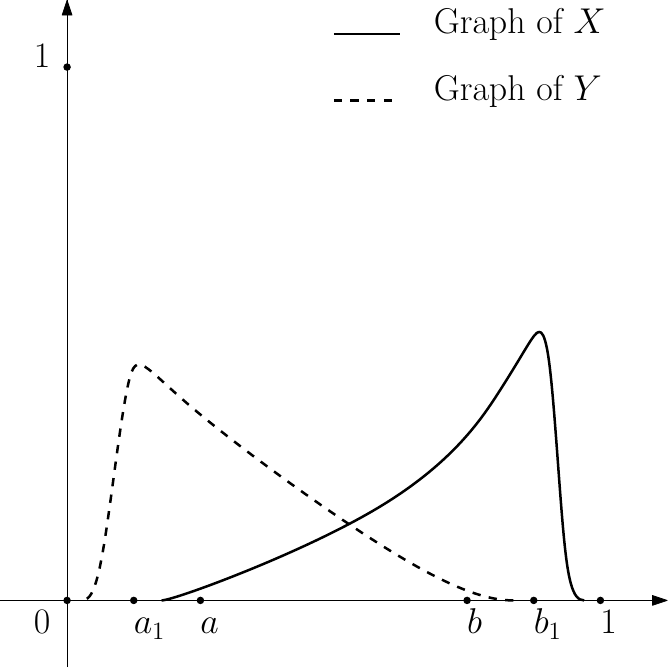}
\caption{Graphs of X and Y}
\label{f:graphX}
\end{center}
\end{figure}

\end{proof}

\begin{proof}[Proof of Theorem \ref{Thm:locfragperf}]
Fix $\eta'>0$. Let $I_1$ be an open interval of $\mathbb{S}^1$ such that $\mathbb{S}^1 \setminus J_2 \subset I_1 \subset J_1$. By Lemma \ref{Lem:compactlysupportedcommutators}, there exist vector fields $X$ and $Y$ which are supported in $J_1$ such that
 for any sufficiently small $t>0$
\begin{enumerate}
\item for any $x$ in $I_1$, $\frac{d}{dt}\left([\phi^t_X,\phi^{t}_Y](x) \right)>0$;
\item for any $x$ in $\mathbb{S}^1$, $\frac{d}{dt}\left([\phi^t_X,\phi^{t}_Y](x) \right) \geq 0$.
\end{enumerate} 

Let $h$ be a $\mathcal{C}^{\infty}$-diffeomorphism of $\mathbb{S}^1$ which sends $J_1$ to $J_2$ such that $I_1$ and $I_2=h(I_1)$ cover the circle. For any $t>0$, let $\xi_t=[\phi^t_X,\phi^{t}_Y] h[\phi^t_X,\phi^{t}_Y]h^{-1}$. Then there exists $\tau>0$ such that, for any $0<t<\tau$ and any point $x$ of the circle, $\frac{d}{dt}(\xi_t(x))>0$ and the diffeomorphisms $\phi^t_X$, $\phi^t_Y$ as well as their conjugates under $h$ are $\frac{\eta'}{2}$-close to the identity. For any $t\in(0,\tau)$, the rotation number of $\xi_t$ is positive. As the rotation number of $\xi_0=Id$ is $0$ and by continuity of the rotation number, there exists $t_0>0$ such that the rotation number of $\xi_{t_0}$ is diophantine.

We then need the following famous theorem due to Herman (see \cite{Her}). For any diffeomorphism $f$ of the circle, we denote by $\rho(f)$ its rotation number.

\begin{thm}[Herman's theorem] \label{Thm:Herman}
Let $\xi$ be a diffeomorphism in $\mathrm{Diff}^{\infty}_+(\mathbb{S}^1)$ with diophantine rotation number. For any $\varepsilon_1>0$, there exists $\varepsilon_2>0$ such that, for any diffeomorphism $g \in \mathrm{Diff}^{\infty}_+(\mathbb{S}^1)$ with $\rho(g)=\rho(\xi)$, and $d_{\infty}(g,\xi)< \varepsilon_2$, there exists a diffeomorphism $h' \in \mathrm{Diff}^{\infty}_+(\mathbb{S}^1)$ such that $d_{\infty}(h',Id)< \varepsilon_1$ and 
$$g=h' \xi (h')^{-1}.$$
\end{thm}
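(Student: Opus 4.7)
The plan is to derive this stability theorem from two classical pillars: Herman's global conjugation theorem and the local (KAM) conjugation theorem at a Diophantine rotation.

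First, I would use Herman's global conjugation theorem: since $\alpha := \rho(\xi)$ is Diophantine and $\xi \in \mathrm{Diff}^\infty_+(\mathbb{S}^1)$, there exists a fixed $H \in \mathrm{Diff}^\infty_+(\mathbb{S}^1)$ such that $\xi = H R_\alpha H^{-1}$. Because $H$ depends only on $\xi$, the conjugation operator $g \mapsto H^{-1} g H$ is continuous on $\mathrm{Diff}^\infty_+(\mathbb{S}^1)$. Hence, given any $\varepsilon_2' > 0$, one can choose $\varepsilon_2 > 0$ small enough that $d_\infty(g,\xi) < \varepsilon_2$ forces $d_\infty(H^{-1}gH, R_\alpha) < \varepsilon_2'$; note that $\tilde g := H^{-1}gH$ still has rotation number $\alpha$.

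Second, the heart of the argument is the local KAM theorem at the rotation $R_\alpha$: for any $\varepsilon_1' > 0$ there is $\varepsilon_2' > 0$ such that every $\tilde g \in \mathrm{Diff}^\infty_+(\mathbb{S}^1)$ satisfying $\rho(\tilde g) = \alpha$ and $d_\infty(\tilde g, R_\alpha) < \varepsilon_2'$ can be written as $\tilde g = K R_\alpha K^{-1}$ with $K \in \mathrm{Diff}^\infty_+(\mathbb{S}^1)$ and $d_\infty(K,\mathrm{Id}) < \varepsilon_1'$. This is proved by a Newton iteration, at each step solving approximately the linearized conjugacy equation, namely the cohomological equation $u \circ R_\alpha - u = \tilde g - R_\alpha - c$ (with $c$ chosen so the right-hand side has zero mean). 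The Diophantine hypothesis controls the small divisors appearing in the Fourier expansion of $u$, and tame estimates in Hölder or Sobolev scales, combined with the quadratic gain of the Newton scheme, deliver convergence of the iterates in the $C^\infty$ Fréchet topology, with quantitative control of the limit $K$ by the initial data $\tilde g - R_\alpha$.

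Third, once $K$ is in hand, I would set $h' := H K H^{-1}$. Then
$$h' \, \xi \, (h')^{-1} \;=\; H K H^{-1} \cdot H R_\alpha H^{-1} \cdot H K^{-1} H^{-1} \;=\; H (K R_\alpha K^{-1}) H^{-1} \;=\; H \tilde g H^{-1} \;=\; g,$$
which is the desired conjugacy. To convert the control $d_\infty(K,\mathrm{Id}) < \varepsilon_1'$ into $d_\infty(h',\mathrm{Id}) < \varepsilon_1$, I again invoke the continuity of conjugation by the fixed $H$ on $\mathrm{Diff}^\infty_+(\mathbb{S}^1)$: for $K$ sufficiently close to the identity (i.e.\ $\varepsilon_1'$ small enough, depending on $\xi$ via $H$), the diffeomorphism $HKH^{-1}$ lies in the prescribed $\varepsilon_1$-neighborhood of $\mathrm{Id}$. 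Chaining the choices $\varepsilon_1 \rightsquigarrow \varepsilon_1' \rightsquigarrow \varepsilon_2' \rightsquigarrow \varepsilon_2$ yields the required $\varepsilon_2$.

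The main obstacle is Step two, the local KAM theorem, which is the only genuinely nontrivial ingredient: it requires careful bookkeeping of the loss of derivatives at each Newton step, controlled by the Diophantine constants of $\alpha$, together with interpolation/tame estimates to close the iteration in the $C^\infty$ category. For the purposes of the present paper this is exactly the content drawn from Herman's monograph \cite{Her}, so in practice one simply invokes his result; the rest of the proof is the conjugation bookkeeping described above.
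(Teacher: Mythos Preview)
The paper does not give its own proof of this statement: Theorem~\ref{Thm:Herman} is stated inside the proof of Theorem~\ref{Thm:locfragperf} as a known result, attributed to Herman with a reference to \cite{Her}, and is then used as a black box. So there is no ``paper's proof'' to compare your proposal against.

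That said, your sketch is a correct and standard way to obtain the statement as formulated here from the classical results: reduce to a neighborhood of the rotation $R_\alpha$ via Herman's global $C^\infty$ linearization of $\xi$, apply the local KAM conjugacy theorem near $R_\alpha$ to get $K$ close to the identity, and conjugate back by $H$ using continuity of conjugation by a fixed diffeomorphism. Your final paragraph already acknowledges that the substantive input is precisely Herman's result from \cite{Her}, which is exactly how the paper treats it.
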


Choose $\varepsilon_1>0$ in such a way that any diffeomorphism $h'$ of the circle which is $\varepsilon_1$-close to the identity sends the union of the supports $\mathrm{supp}(X)\cup\mathrm{supp}(Y)$ inside $J_1$, the set $h(\mathrm{supp}(X)\cup\mathrm{supp}(Y))$ inside $J_2$ and, for any $0<t< \tau$ the conjugates under $h'$ of $\phi^t_X$, $\phi^t_Y$,$h\phi^t_Xh^{-1}$ and $h\phi^t_Y h^{-1}$ are $\eta'$-close to the identity.

Apply Herman's theorem to $\xi=\xi_{t_0}$ with such an $\varepsilon_1>0$.

There exist $0<\tau_1<t_0<\tau_2<\tau$ such that 
$$\rho(\xi_{\tau_1}) < \rho(\xi_{t_0})< \rho(\xi_{\tau_2})$$
and, for any $t \in [\tau_1,\tau_2]$, $d_{\infty}(\xi_t , \xi_{t_0})< \frac{\varepsilon_2}{2}$.

Take $\eta>0$ such that, for any diffeomorphism $f \in \mathrm{Diff}^{\infty}(\mathbb{S}^1)$, which is $\eta$-close to the identity, we have, for any $t \in [\tau_1,\tau_2]$, $d_{\infty}(\xi_t f, \xi_{t_0})< \varepsilon_2$ and 
$$ \rho(\xi_{\tau_1}f)<\rho(\xi_{t_0}) < \rho(\xi_{\tau_2}f).$$ 
By continuity of $\rho$ and the intermediate value theorem, there exists $t_1 \in (\tau_1,\tau_2)$ such that $\rho(\xi_{t_1}f)=\rho(\xi_{t_0})$.  
 
By Herman's theorem, there exists a diffeomorphism $h'$ $\varepsilon_1$-close to the identity such that

$$\xi_{t_1}f=h' \xi_{t_0} (h')^{-1}.$$

Hence
$$f=[g_1,g'_1][g_2,g'_2][g_3,g'_3][g_4,g'_4]$$
with $g_1=h\phi^{t_1}_Y h^{-1}$, $g'_1=h\phi^{t_1}_X h^{-1}$, $g_2=\phi^{t_1}_Y $, $g'_2=\phi^{t_1}_X $,$g_3=h'\phi^{t_0}_X (h')^{-1}$, $g'_1=h'\phi^{t_0}_Y(h')^{-1}$, $g_2=h'h\phi^{t_0}_X (h'h)^{-1}$, $g'_2=h'h\phi^{t_0}_Y (h'h)^{-1}$.
The $g_i$'s and the $g'_i$s are $\eta'$-close to the identity.  
\end{proof}

\subsection{Local perfection for diffeomorphisms of the real line and of the segment}
\label{ss:perfect}
Let $\varepsilon>0$. Fix a distance on $\mathrm{Diff}^{\infty}_c(\mathbb{R})$ which is compatible with the $C^{\infty}$ topology.

\begin{defn}
We call $\varepsilon$-commutator of $\mathrm{Diff}^{\infty}_c(\mathbb{R})$ any diffeomorphism in $\mathrm{Diff}^{\infty}_c(\mathbb{R})$ which is a commutator of elements of $\mathrm{Diff}^{\infty}_c(\mathbb{R})$ which are $\varepsilon$-close to the identity.
\end{defn}

\begin{thm} \label{Thm:localperfectionR}
Let $\varepsilon >0$. Fix a compact interval $I$ of $\mathbb{R}$. There exists $\varepsilon'>0$ such that any diffeomorphism in $\mathrm{Diff}^{\infty}_c(\mathbb{R})$ which is $\varepsilon'$-close to the identity and supported in $I$ can be written as a product of five $\varepsilon$-commutators.
\end{thm}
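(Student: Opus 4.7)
The natural approach is to adapt Avila's proof of Theorem \ref{Thm:locfragperf}, replacing Herman's theorem (unavailable here) by the Mather-type conjugacy tools developed in Section \ref{s:classical}. Fix a compact interval $I$ and a slightly larger compact interval $J \supset I$, and let $J_1, J_2$ be two open intervals with closures in $J$ covering $J$. Applying Lemma \ref{Lem:compactlysupportedcommutators} inside $J_1$, we obtain compactly supported vector fields $X, Y$ supported in $J_1$ such that for small $t > 0$, $[\phi_X^t, \phi_Y^t]$ strictly advances every point of a subinterval $I_1 \subset J_1$. We choose a compactly supported $h \in \Diff^\infty_c(\mathbb{R})$ sending $I_1$ into $J_2$ so that $I_1 \cup h(I_1)$ covers a neighborhood of $J$, and define
$$\xi_t := [\phi_X^t, \phi_Y^t] \cdot h [\phi_X^t, \phi_Y^t] h^{-1}.$$
For $t$ in some small interval $(0, \tau)$, $\xi_t$ is supported in a fixed compact set $\tilde J$, coincides with the identity near $\partial \tilde J$, and satisfies $\xi_t(x) > x$ on the interior of its support. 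Each of the factors $\phi_X^t, \phi_Y^t$ and their conjugates under $h$ is $C^\infty$-close to the identity.

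Given $f \in \Diff^\infty_c(\mathbb{R})$ supported in $I$ and $C^\infty$-close to the identity, the plan is to find $t$ close to a fixed $t_0 \in (0, \tau)$ and a diffeomorphism $h'$ close to the identity with $\xi_t f = h' \xi_{t_0} (h')^{-1}$; this would give
$$f = \xi_t^{-1} \cdot h' \xi_{t_0} (h')^{-1},$$
a product of four commutators of small elements, since $\xi_t^{\pm 1}$ is visibly a product of two such commutators and conjugates of commutators are commutators. The existence of $h'$ reduces to a conjugacy problem on $\tilde J$: both $\xi_t f$ and $\xi_{t_0}$ are diffeomorphisms of $\tilde J$ without interior fixed point, identity (and hence ITI) near $\partial \tilde J$. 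By Proposition \ref{p:Mather} and Lemma \ref{l:crit}, a smooth conjugacy $h'$ with the required properties exists as soon as the Mather invariants of $\xi_t f$ and $\xi_{t_0}$ coincide and the flow-time $\sigma$ of Lemma \ref{l:crit} is an integer.

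The matching of the flow-time $\sigma$ is handled by varying $t$ near $t_0$: by formula~\eqref{e:tf}, $\sigma(t)$ depends continuously on $t$ and is unbounded as $t$ leaves its admissible range, so the intermediate value theorem yields some $t$ near $t_0$ for which $\sigma \in \mathbb{Z}$, ensuring that the $C^1$ conjugacy is in fact $C^\infty$ at the boundary points. The hard part, and the main obstacle, is the Mather invariant condition. Whereas on the circle Herman's theorem handles the conjugacy from a one-parameter matching (rotation numbers), here the Mather invariant lives in an infinite-dimensional space. The reference $\xi_{t_0}$, being a flow time map, has trivial Mather invariant, so we need $\xi_t f$ to embed in a $C^1$ flow. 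This is achieved by introducing an extra $\varepsilon$-commutator: we replace $f$ by $f \cdot [\alpha, \beta]^{-1}$ for a well-chosen commutator $[\alpha, \beta]$ supported in $\tilde J$ with $\alpha, \beta$ close to the identity, whose role is to cancel the Mather invariant of $\xi_t f$.

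Combining everything, we would obtain $f = [\alpha, \beta] \cdot \xi_t^{-1} \cdot h' \xi_{t_0} (h')^{-1}$, a product of five $\varepsilon$-commutators, each kept close to the identity by continuity in the data provided $\varepsilon'$ is small enough. The most delicate step, and the technical heart that would require careful work, is the explicit construction of the ``Mather-killing'' commutator $[\alpha, \beta]$: one must understand precisely which small Mather invariants are realizable as commutators of elements close to the identity, and produce one matching the Mather invariant of $\xi_t f$ depending continuously (or at least boundedly) on $f$. This is the only substantive novelty needed beyond Avila's circle strategy, and accounts for the extra commutator in the statement.
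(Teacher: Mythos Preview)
Your proposal has the right flavor but contains two genuine gaps that the paper's proof handles very differently.

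First, your claim that ``$\xi_{t_0}$, being a flow time map, has trivial Mather invariant'' is incorrect: $\xi_{t_0}=[\phi_X^{t_0},\phi_Y^{t_0}]\cdot h[\phi_X^{t_0},\phi_Y^{t_0}]h^{-1}$ is a product of two commutators, not the time-$1$ map of a flow, and there is no reason for its Mather invariant to be trivial. Without this, the target of your conjugacy problem is as uncontrolled as the source.

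Second, and more seriously, you reduce everything to producing a \emph{single} $\varepsilon$-commutator $[\alpha,\beta]$ that kills an arbitrary small Mather invariant, admitting this is ``the technical heart that would require careful work''. But the Mather invariant is an arbitrary small circle diffeomorphism (modulo rotations), and expressing it as a single commutator of small elements is precisely a local perfection statement of the type you are trying to prove. The paper does not attempt this; instead it uses the already-established circle result (Theorem~\ref{Thm:locfragperf}) to write the Mather invariant as a product of \emph{four} small commutators, transported back to the line via Lemma~\ref{Lem:fragMat}. This accounts for four of the five commutators.

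The paper's fifth commutator comes from a different mechanism than your $\xi_t$: it fixes a single diffeomorphism $h$ with \emph{hyperbolic} fixed points at $0$ and $1$ (not ITI, as your $\xi_t$ has). Sternberg's linearization then gives, for any small $u$, smooth local conjugacies between $uh$ and $h$ near $0$ and $1$; combined with Lemma~\ref{l:crit} (once the Mather invariant of $uh$ has been trivialized), this globalizes to $u'h=\varphi h\varphi^{-1}$, i.e.\ $u'=[\varphi,h]$. Your ITI endpoints forfeit Sternberg's structural stability, and your proposed replacement --- varying $t$ to hit $\sigma\in\Z$ --- is entangled with the Mather correction (which itself depends on $t$), a circularity you do not resolve.
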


The starting point of the proof of this theorem relies on the fact that a diffeomorphism with hyperbolic fixed points is structurally stable on neighborhoods of those fixed points, by a theorem by Sternberg (see \cite{Yoc} appendix 4).

Without loss of generality, we can suppose that $I=[0,1]$.

Fix $\varepsilon>0$. Choose $\varepsilon_1>0$ in such a way that the product of two $C^{\infty}$ diffeomorphisms of $\mathbb{R}$ supported in $[-1,2]$ which are $\varepsilon_1$-close to the identity is $\varepsilon$-close to the identity. 

Take a diffeomorphism $h$ in $\mathrm{Diff}^{\infty}_c(\mathbb{R})$ with the following properties.
\begin{enumerate}
\item $h(0)=0$ and $h(1)=1$.
\item the diffeomorphism $h$ is supported in $[-1,2]$.
\item the diffeomorphism $h$ has no fixed point in $(0,1)$.
\item $Dh(0)>1$ and $Dh(1)<1$.
\item $h$ is generated by a $C^{\infty}$ vector field $X$.
\item For any $\tau \in [-1,1]$, $d_{\infty}(\phi^{\tau}_X,Id_{\mathbb{R}})<\varepsilon_1$.
\end{enumerate}

The following lemma is a consequence of Sternberg's theorem and Lemma \ref{l:crit}.

\begin{lem} \label{Lem:sternberg}
There exists $\varepsilon_2>0$ such that the following property holds. Take any diffeomorphism $u \in \mathrm{Diff}^{\infty}_c(\mathbb{R})$ with the following properties
\begin{enumerate}
\item $d_{\infty}(u,Id)< \varepsilon_2$,
\item the diffeomorphism $u$ is supported in $[0,1]$,
\item the diffeomorphism $uh$ of $[0,1]$ has no interior fixed point and the Mather invariant of $uh_{|[0,1]}$ is trivial.
\end{enumerate}
Then $u$ is an $\varepsilon$-commutator.
\end{lem}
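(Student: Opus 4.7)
The plan is to write $u$ as a single commutator $u = [\vf, h]$ with $\vf\in\Diff^\infty_c(\R)$ arbitrarily $C^\infty$-close to the identity; since $h$ is $\eps_1$-close to the identity by hypothesis 6 and $\eps_1$ was chosen so that a product of two $\eps_1$-close maps is $\eps$-close, this will exhibit $u$ as an $\eps$-commutator. The equation to solve is $\vf h\vf^{-1}=uh$ on $\R$, and I will build $\vf$ by gluing a conjugacy on $[0,1]$ with flow maps of $X$ on $[-1,0]$ and $[1,2]$, and with the identity further out.

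The first observation is that since $u$ is smooth and supported in $[0,1]$, it coincides with the identity on a one-sided neighborhood of each of $0$ and $1$, so $u$ is ITI at both endpoints. Consequently $uh$ and $h$ share the same infinite jet at $0$ and at $1$; in particular they have the same derivatives there, which are hyperbolic. Next I will produce local $C^\infty$ conjugacies $\vf_0$ and $\vf_1$ between $h$ and $uh$ near $0$ and near $1$, chosen to be ITI. Sternberg's $C^\infty$-linearization theorem conjugates each of $h$ and $uh$ to their common linear part near each endpoint, giving local conjugacies by composition. To impose the ITI condition, I will use that any two formal conjugacies at $0$ differ by a jet belonging to the centralizer of $h$'s jet, which is the jet of the flow of $X$; so composing a Sternberg conjugacy $\widetilde\vf_0$ with an appropriate $h^{-t_0}$ yields a smooth conjugacy $\vf_0$ with identity jet at $0$, and similarly for $\vf_1$ at $1$. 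Structural stability of the Sternberg linearization ensures both can be made $C^\infty$-close to the identity when $u$ is.

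With these local conjugacies, I will apply Lemma~\ref{l:crit} on $[0,1]$: both $h$ and $uh$ have no interior fixed point and trivial Mather invariant (by hypothesis for $uh$, by definition for $h$), and their generating vector fields are smooth at the hyperbolic endpoints by Takens' theorem. Lemma~\ref{l:crit} then yields a smooth diffeomorphism $\vf$ of $[0,1]$ conjugating $h$ to $uh$, equal to $\vf_0$ near $0$ and to $\vf_1\circ h^\sigma$ near $1$ for some real $\sigma$ that tends to $0$ as $u\to\id$ by continuity of the formula in Lemma~\ref{l:crit}. I then extend $\vf$ to $\R$ by declaring $\vf=\id$ on $(-\infty,0]$, $\vf=h^\sigma$ on $[1,2]$, and $\vf=\id$ on $[2,+\infty)$. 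The gluings are smooth: at $0$ because $\vf_0$ is ITI; at $1$ because $\vf_1$ is ITI, so that $\vf_1\circ h^\sigma$ and $h^\sigma$ share the same infinite jet there; at $2$ because $h$, and therefore $h^\sigma$, is ITI at $2$ by the compact support condition on $h$. Outside $[0,1]$, $\vf$ is either the identity or a flow map of $X$, hence commutes with $h$, so $[\vf,h]=\id=u$ there; on $[0,1]$ the conjugacy relation gives $[\vf,h]=u$ directly. Hence $u=[\vf,h]$ on all of $\R$.

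The main technical hurdle I expect is the $C^\infty$-continuity of the construction with respect to $u$: I will need to verify carefully that the Sternberg linearizations, the ITI-normalization, and the output of Lemma~\ref{l:crit} all depend continuously on $u$ in the $C^\infty$ topology, so that each piece of $\vf$ is $\eps_1$-close to the identity for $\eps_2$ small enough, and in particular that $\sigma$ can be made as small as one likes so that $h^\sigma$ on $[1,2]$ is controlled via hypothesis 6. Once this is granted, the product estimate defining $\eps_1$ yields $\vf$ globally $\eps$-close to the identity, concluding the proof.
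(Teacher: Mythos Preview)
Your argument is correct and follows the same route as the paper's: obtain local ITI conjugacies $\vf_0,\vf_1$ between $h$ and $uh$ via Sternberg, glue them into a global smooth conjugacy on $[0,1]$ via Lemma~\ref{l:crit} (using that $X_h=X$ is smooth at the hyperbolic endpoints), and extend to a compactly supported $\vf$ on $\R$ so that $u=[\vf,h]$ with both factors $\eps$-close to the identity. Your extension by $h^\sigma$ on $[1,2]$ and the identity outside $[-1,2]$ is in fact more careful than the paper's stated extension by the identity outside $[0,1]$: since near $1$ one has $\vf=\vf_1\circ\phi_X^\tau$ with $\vf_1$ ITI but $D\phi_X^\tau(1)=(Dh(1))^\tau\neq 1$, the paper's extension is not $C^1$ at $1$, whereas yours matches jets at $1$ and at $2$ (where $X$ is infinitely flat) and still gives $[\vf,h]=\id$ on $[1,2]$ because $\phi_X^\sigma$ commutes with $h$ there.
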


\begin{proof}
Let $\lambda\in(0,\varepsilon_1)$ be a small number which will be fixed later.
By Sternberg's theorem (see \cite{Yoc} appendix 4 for a proof), there exists $\varepsilon_2>0$ such that, for any diffeomorphism $u\in \mathrm{Diff}^{\infty}_c(\mathbb{R})$ supported in $[0,1]$ which is $\varepsilon_2$-close to the identity, the following property holds.
There exist $C^{\infty}$-diffeomorphisms $\varphi_0:[0,\frac{3}{4}]=I_0 \rightarrow [0,\varphi_0(\frac{3}{4})] \subset [0,1)$ ITI at $0$ and $\varphi_1:[\frac{1}{4},1]=I_1 \rightarrow [\varphi_{1}(\frac{1}{4}),1] \subset (0,1]$ ITI at $1$ such that, for $i=0,1$,
\begin{enumerate}
\item $\varphi_i^{-1}uh \varphi_i=h$ on $I_i$.
\item $d_{\infty}(\eta_i,Id) < \lambda$.
\end{enumerate}


Now take a diffeomorphism $u$ as in the statement of the lemma. By Lemma \ref{l:crit}, as the Mather invariants of $h$ and $uh$ are trivial, there exists a $C^{1}$-diffeomorphism $\varphi$ of $[0,1]$ and $\tau \in \mathbb{R}$ such that $\varphi$ coincides with $\varphi_0$ near $0$, with $\varphi_1 \circ \phi_{X}^{\tau}$ near $1$, $\varphi$ is $C^\infty$ on $(0,1)$ and $\varphi^{-1}uh \varphi=h$. Since $\varphi_0$, $\varphi_1$ and $X$ are $C^\infty$, $\varphi$ is actually $C^\infty$ on the closed interval $[0,1]$. The relations $uh \varphi=\varphi h$ and $uh \varphi_i=\varphi_i h$ for $i=0$ and $1$ imply that $\varphi=\varphi_0$ on $[0,\frac{3}{4}]$ and $\varphi=\varphi_1 \circ \phi_{X}^{\tau}$ where $\varphi_1 \circ \phi_{X}^{\tau}$ is defined. Hence $\varphi_0=\varphi_1 \circ \phi_{X}^{\tau}$ where both sides of the equality are defined.

Moreover $\tau=\tau_{uh}(\eta_{0}(\frac{1}{4}),\eta_1(\frac{3}{4}))-\tau_{h}(\frac{1}{4},\frac{3}{4})$. By choosing $\lambda$ and $\varepsilon_2>0$ small enough, we can ensure that $|\tau|\leq 1$.

Extend $\varphi$ as a $C^{\infty}$-diffeomorphism of $\mathbb{R}$ which fixes all the points outside $[0,1]$. Then, as the diffeomorphisms $\varphi_0$, $\varphi_1$ and $\phi^{\tau}_X$ are $\varepsilon_1$-close to the identity, by Property 6 for $\phi_X^\tau$, and by the choice of $\varepsilon_1$, the diffeomorphism $\varphi$ is $\varepsilon$-close to the identity and 
$$\varphi^{-1}uh \varphi=h.$$
Hence $u=\varphi h \varphi^{-1}h^{-1}$ is an $\varepsilon$-commutator.
\end{proof}

To get rid of the condition on the vanishing of the Mather invariant, we need the following lemma.

\begin{lem}[Destruction of the Mather invariant]\label{Lem:destMat}
Let $\varepsilon_3>0$. There exists $\varepsilon_4>0$ such that, for any diffeomorphism $u \in \mathrm{Diff}^{\infty}_c(\mathbb{R})$ supported in $[0,1]$ with $d_{\infty}(u,Id)< \varepsilon_4$ and such that $uh$ has no fixed point in $(0,1)$, there exist two families of diffeomorphisms $(\theta_i)_{1 \leq i \leq 4}$ and $(\theta'_i)_{1 \leq i \leq 4}$ with compact support in $(0,1)$ with the following properties.
\begin{enumerate}
\item The diffeomorphism $[\theta_4,\theta'_4][\theta_3,\theta'_3][\theta_2,\theta'_2][\theta_1,\theta'_1]uh_{|[0,1]}$ has no fixed point in $(0,1)$ and its Mather invariant is trivial.
\item for any $1 \leq i \leq 4$, $d_{\infty}(\theta_i,Id) < \varepsilon_3$ and $d_{\infty}(\theta'_i,Id) < \varepsilon_3$.
\end{enumerate}
\end{lem}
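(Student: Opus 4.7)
The plan is to apply Theorem \ref{Thm:locfragperf} to the Mather invariant of $uh$ and lift the resulting factorization into $\mathrm{Diff}^\infty_c((0,1))$ through a construction using the flow chart of the left generating vector field of $uh$.

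First, I would show that $M_{uh}\to\mathrm{id}$ in $\mathrm{Diff}^\infty_+(\mathbb{S}^1)$ as $u\to\mathrm{id}$ in $\mathrm{Diff}^\infty_c(\R)$. Since $h$ is the time-$1$ map of the smooth vector field $X$, its left and right generating vector fields coincide with $X$ and $M_h$ is the trivial class. For $u$ close to $\mathrm{id}$, $uh$ still has hyperbolic fixed points at $0$ and $1$ and no interior fixed point (both being open conditions). By Sternberg's theorem, $uh$ is smoothly linearizable near each endpoint, so its left and right generating vector fields are smooth near the endpoints and depend $C^\infty$-continuously on $u$; propagating them via $uh$ extends the continuity to any compact subinterval of $(0,1)$, hence $M_{uh}\to\mathrm{id}$. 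Given $\varepsilon_3>0$, take $\eta>0$ from Theorem \ref{Thm:locfragperf} (with output accuracy tuned to the Lipschitz constant of the lifting below), and choose $\varepsilon_4>0$ so that $d_\infty(u,\mathrm{id})<\varepsilon_4$ implies $d_\infty(M_{uh},\mathrm{id})<\eta$. Apply Theorem \ref{Thm:locfragperf} to $M_{uh}^{-1}$ to obtain $M_{uh}^{-1}=[\alpha_1,\alpha'_1][\alpha_2,\alpha'_2][\alpha_3,\alpha'_3][\alpha_4,\alpha'_4]$ with all $\alpha_i,\alpha'_i$ close to $\mathrm{id}$.

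Second, define a lifting $L:\alpha\mapsto\theta_\alpha$. Let $\iota=\psi_{X_{uh},p}:\R\to(0,1)$ be the flow chart of the left generating vector field of $uh$ based at some $p$ close to $0$, which conjugates $T_1$ to $uh|_{(0,1)}$. Given $\alpha$ close to the identity, lift to $\tilde\alpha\in\mathrm{Diff}(\R)$ commuting with $T_1$ and define $\theta_\alpha$ to equal $\iota\tilde\alpha\iota^{-1}$ on a large central region $\iota([-N,N])$, to equal $\mathrm{id}$ outside a slightly larger interval, with smooth cutoff in between. For $\alpha$ close to $\mathrm{id}$, $\theta_\alpha\in\mathrm{Diff}^\infty_c((0,1))$ is close to $\mathrm{id}$ in $C^\infty$. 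Crucially, on the central region, $\theta_\alpha\theta_\beta=\theta_{\alpha\beta}$ because $\tilde\alpha\tilde\beta=\widetilde{\alpha\beta}$ on lifts, so $[\theta_\alpha,\theta_\beta]$ coincides with $\theta_{[\alpha,\beta]}$ both inside the central region and outside the supports. A direct computation in the chart $\iota$, using Lemma \ref{l:crit} and the fact that $[\theta_\alpha,\theta_\beta]\cdot uh$ in the chart $\iota$ reads as $[\tilde\alpha,\tilde\beta]\circ T_1$ on the central region, yields the key formula $M_{[\theta_\alpha,\theta_\beta]\cdot uh}=M_{uh}\cdot[\alpha,\beta]$ modulo rotations.

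Setting $\theta_i=L(\alpha_i)$ and $\theta'_i=L(\alpha'_i)$, the Mather invariant of $[\theta_4,\theta'_4][\theta_3,\theta'_3][\theta_2,\theta'_2][\theta_1,\theta'_1]\cdot uh$ is $M_{uh}\cdot[\alpha_1,\alpha'_1]\cdots[\alpha_4,\alpha'_4]=\mathrm{id}$, and no interior fixed point appears since each $\theta_i,\theta'_i$ is close to $\mathrm{id}$. The main obstacle is the precise verification of the formula $M_{[\theta_\alpha,\theta_\beta]\cdot uh}=M_{uh}\cdot[\alpha,\beta]$ under truncation. The crucial point is that the Mather invariant depends only on the germs at $0,1$ (unchanged by truncation) and on the dynamics on a single fundamental domain inside the central unmodified region; moreover, the commutator structure ensures $[\tilde\alpha,\tilde\beta]$ has trivial rotation number, which keeps the orbits of $[\tilde\alpha,\tilde\beta]T_1$ bounded away from the cutoff region and thus prevents the truncation from perturbing the Mather-invariant computation.
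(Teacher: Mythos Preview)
Your high-level plan---continuity of the Mather invariant, applying Theorem~\ref{Thm:locfragperf} to it, and lifting the commutator factorization back into $\mathrm{Diff}^\infty_c((0,1))$---matches the paper's. The essential divergence, and the gap, is in the lifting step.

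You lift each circle diffeomorphism $\alpha$ to a $T_1$-commuting $\tilde\alpha$ on $\R$ (hence with noncompact support) and then \emph{truncate} by a cutoff. You then assert the formula $M_{[\theta_\alpha,\theta_\beta]\cdot uh}=M_{uh}\cdot[\alpha,\beta]$, and your justification is that the Mather invariant ``depends only on the germs at $0,1$ and on the dynamics on a single fundamental domain inside the central region''. This is not correct as stated: the Mather invariant is $\psi_Y^{-1}\psi_X$, and the flow chart $\psi_X$ of the new left generating vector field is obtained by propagating from $0$ \emph{through} the cutoff zone, so the truncation affects $\psi_X$ on the whole region to the right of the first cutoff, not just on the cutoff annulus. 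The further claim that ``the commutator structure keeps orbits bounded away from the cutoff region'' is not an argument: $[\tilde\alpha,\tilde\beta]$ is $T_1$-periodic and need not vanish near $\pm N$. Note also that you never use the fragmentation conclusion of Theorem~\ref{Thm:locfragperf} (that each $g_i,g'_i$ is supported in $J_1$ or $J_2$), which is a symptom of the problem.

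The paper avoids truncation altogether. Precisely because each $g_i,g'_i$ produced by Theorem~\ref{Thm:locfragperf} is supported in an arc $J_1$ or $J_2$ of length $<1$, it lifts to a diffeomorphism $\phi_i$ of $\R$ supported in a \emph{single} interval of length $<1$; conjugating by the flow chart $\psi$ already gives a compactly supported $h_i\in\mathrm{Diff}^\infty_c((0,1))$ with no cutoff. The commutators are then placed in \emph{disjoint} fundamental domains (the conditions $\alpha_{i+1}<\alpha_i-1$, $\alpha_1\le 0$ in Lemma~\ref{Lem:fragMat}), and that lemma---quoted from \cite{EBN}, itself adapted from \cite{BCVW}---gives the exact formula $\tilde M_g^{p,p}\circ R_\tau=\tilde\phi_1\circ\cdots\circ\tilde\phi_l\circ\tilde M_f^{p,p}$. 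The fragmentation in Theorem~\ref{Thm:locfragperf} is thus not cosmetic but exactly what makes the lifting compactly supported and the Mather-invariant bookkeeping clean. If you want to repair your argument, replace the periodic-lift-plus-cutoff construction by this single-fundamental-domain lift and invoke (or reprove) Lemma~\ref{Lem:fragMat}.
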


\begin{proof}
Let $p=\frac{1}{2}$ and
$$\begin{array}{rrcl}
\psi: & \mathbb{R} & \rightarrow & [0,1] \\
 & t & \mapsto & \varphi^t_X(p)
 \end{array}
 .$$

For any diffeomorphism $\phi$ which is supported in an open interval $J$ of length one, we denote by $\hat{\phi} $ the diffeomorphism of $\mathbb{R}$ which coincides with $\phi$ on $J$ and commutes with $t\mapsto t+1$  and by $\tilde{\phi}$ the diffeomorphism of the circle $\mathbb{R}/ \mathbb{Z}$ which is induced by $\hat{\phi}$. 

For any $\mathcal{C}^{\infty}$ diffeomorphism $g$ of $[0,1]$ with no interior fixed point and $p \in [0,1]$, recall that $M_g^{p,p}$ is a diffeomorphism of $\mathbb{R}$ which commutes with integral translations. The class of this diffeomorphism modulo pre and post composition by translation is the Mather invariant of $g$. We denote by $\tilde{M}_g^{p,p}$ the diffeomorphism of $\mathbb{S}^1=\mathbb{R}/\mathbb{Z}$ induced by $M_g^{p,p}$. For $\tau \in \mathbb{R}/\mathbb{Z}$, we denote by $R_\tau$ the translation of $\mathbb{S}^1$ by $\tau$.

The following lemma is Corollary 3.7 of \cite{EBN}, which is itself an adaptation of Corollary 1.8 of \cite{BCVW}.

\begin{lem} \label{Lem:fragMat}
Fix a $\mathcal{C}^{\infty}$-diffeomorphism $f$ of $[0,1]$ with no interior fixed point. Let $(\alpha_i)_{1 \leq i\leq \ell}$ be a finite sequence of real numbers with, for any $i$, $\alpha_{i+1}< \alpha_i-1$ and $\alpha_1 \leq 0$. For every $i$, let $\phi_i$ be a diffeomorphism supported in $(\alpha_i-1,\alpha_i)$ and let $h_i=\psi \phi_i \psi^{-1}$. Let $g=f \circ h_1 \circ \ldots \circ h_l$.

Then $\tilde{M}_g^{p,p}\circ R_\tau=\tilde{\phi}_1 \circ \tilde{\phi}_2 \circ \ldots \circ \tilde{\phi}_l \circ \tilde{M}_f^{p,p}$ for some $\tau \in \mathbb{R}$.

\end{lem}

Let $J_1=[\frac{1}{4},1] \subset \mathbb{S}^1= \mathbb{R}/ \mathbb{Z}$ and $J_2=[\frac{3}{4},\frac{1}{2}] \subset \mathbb{S}^1= \mathbb{R}/ \mathbb{Z}$ so that the interiors of $J_1$ and $J_2$ cover the circle.

Choose $\eta'>0$ in such a way that, for any $\mathcal{C}^{\infty}$-diffeomorphism $\lambda$ of $\mathbb{R}$ which is supported in $[-10,0]$ and which is $\eta'$-close to the identity, $\psi \lambda \psi^{-1}$ is $\varepsilon_3$-close to the identity. Let $\eta>0$ be given by Theorem \ref{Thm:locfragperf}. By continuity of the Mather invariant (see the two first pages of Chapter 5 of \cite{Yoc}), there exists $\varepsilon_4>0$ such that, for any diffeomorphism $u$ of $\mathbb{R}$ which is $\varepsilon_4$ close to the identity as in the statement of Lemma \ref{Lem:destMat}, the diffeomorphism $f=(uh)^{-1}$ has no fixed point in $(0,1)$ and the diffeomorphism $\tilde{M}_{f}^{p,p}$ is $\eta$-close to the identity. We apply Theorem \ref{Thm:locfragperf} to $\tilde{M}_f^{p,p}$ to obtain the existence of diffeomorphisms $g_i$ and $g'_i$, for $1 \leq i \leq 4$ which are $\eta'$-close to the identity.

For $1 \leq i\leq 4$ let $\tilde{\phi}_i=[g'_{5-i},g_{5-i}]$ so that $\tilde{\phi}_1\circ \tilde{\phi}_2 \circ \tilde{\phi}_3 \circ \tilde{\phi}_4 \circ \tilde{M}_f^{p,p}$ is the identity. Let $\phi_i$ (respectively $\xi_i$,$\xi'_i$) be the diffeomorphism of $\mathbb{R}$ supported in $[-2i-\frac{7}{4},-2i-1]$ if $i=1,4$ or $[-2i-\frac{5}{4},-2i-\frac{1}{2}]$ if $i=2,3$ which coincides with $\phi_i$ (respectively $g_{5-i}$, $g'_{5-i}$) on its support mod $\mathbb{Z}$. Observe that $\phi_i=[\xi'_i,\xi_i]$ and that both $\xi$ and $\xi'_i$ are $\eta'$-close to the identity . Finally let $h_i=\psi \phi_i \psi^{-1}$, $\theta_i=\psi \xi_i \psi^{-1}$ and $\theta'_i=\psi \xi'_i \psi^{-1}$. Observe that the diffeomorphisms $\theta_i$ and $\theta'_i$ are $\varepsilon_3$-close to the identity.

By Lemma \ref{Lem:fragMat}, the Mather invariant of 
$$f h_1 h_2 h_3 h_4=h^{-1}u^{-1}[\theta'_1,\theta_1][\theta'_2,\theta_2][\theta'_3,\theta_3] [\theta'_4,\theta_4]$$ 
is trivial so that the Mather invariant of its inverse
$$[\theta_4,\theta'_4][\theta_3,\theta'_3][\theta_2,\theta'_2][\theta_1,\theta'_1]uh$$
is trivial.
\end{proof}

\begin{proof}[End of the proof of Theorem \ref{Thm:localperfectionR}]
Take $\varepsilon_2>0$ given by Lemma \ref{Lem:sternberg}. Take $\varepsilon>\varepsilon_3>0$ in such a way that the product of $17$ diffeomorphisms $\varepsilon_3$-close to the identity is $\varepsilon_2$-close to the identity. Let $\varepsilon_4>0$ be given by Lemma \ref{Lem:destMat}. Without loss of generality, we can suppose $\varepsilon_4 \leq \varepsilon_3$.

Take $\varepsilon_4>\varepsilon'>0$ in such a way that the product of $h$ with a diffeomorphism $u$ with support in $[0,1]$ and which is $\varepsilon'$-close to the identity has no fixed point in $[0,1]$. Fix such a diffeomorphism $u$ and apply Lemma \ref{Lem:destMat}: there exist two families of diffeomorphisms $(\theta_i)_{1 \leq i \leq 4}$ and $(\theta'_i)_{1 \leq i \leq 4}$ with compact support in $(0,1)$ with the following properties.
\begin{enumerate}
\item The Mather invariant of $[\theta_4,\theta'_4][\theta_3,\theta'_3][\theta_2,\theta'_2][\theta_1,\theta'_1]uh_{|[0,1]}$ is trivial.
\item for any $1 \leq i \leq 4$, $d_{\infty}(\theta_i,Id) < \varepsilon_3$ and $d_{\infty}(\theta'_i,Id) < \varepsilon_3$.
\end{enumerate}
Let $u'=[\theta_4,\theta'_4][\theta_3,\theta'_3][\theta_2,\theta'_2][\theta_1,\theta'_1]u$. Observe that $u'$ is $\varepsilon_2>0$-close to the identity and that $u'h$ has a trivial Mather invariant. We can hence apply Lemma \ref{Lem:sternberg} to $u'$. The diffeomorphism $u'h$ is a $\varepsilon$ commutator and the diffeomorphism $uh$ is a product of $5$ $\varepsilon$-commutators.
\end{proof}

All the commutators which are constructed in the proof of Theorem \ref{Thm:localperfectionR} preserve the interval $I$. Hence, as a corollary of the proof of Theorem \ref{Thm:localperfectionR}, we obtain the following result of local perfection for $C^{\infty}$-diffeomorphisms of a segment.

\begin{thm}
Let $\varepsilon >0$. There exists $\varepsilon'>0$ such that any diffeomorphism in $\mathrm{Diff}^{\infty}([0,1])$ which is $\varepsilon'$-close to the identity and with ITI fixed points at $0$ and $1$ can be written as a product of $5$ $\varepsilon$-commutators.
\end{thm}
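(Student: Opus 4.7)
The plan is to deduce this statement from Theorem \ref{Thm:localperfectionR} essentially for free, exploiting the key observation already recorded in the preceding paragraph: every commutator factor produced in the proof of Theorem \ref{Thm:localperfectionR} (the model $h$, the Sternberg-type conjugators $\varphi$ extending~$\varphi_0$ and $\varphi_1\circ\phi^\tau_X$, the $\psi$-conjugates $\theta_i,\theta'_i$ destroying the Mather invariant, etc.) sends the interval $I=[0,1]$ to itself, even though its support may be slightly larger.

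First I would fix $\varepsilon>0$, apply Theorem \ref{Thm:localperfectionR} with $I=[0,1]$ to obtain the associated $\varepsilon'>0$, and then start from an arbitrary $f\in\Diff^\infty_+([0,1])$ which is $\varepsilon'$-close to the identity (in the $C^\infty$ topology on the segment) and is ITI at both endpoints. The ITI hypothesis at $0$ and $1$ is exactly what is needed to extend $f$ by the identity outside $[0,1]$ to a bona fide element $\tilde f\in\Diff^\infty_c(\R)$ supported in $[0,1]$: the infinite tangency to $\id$ at each endpoint guarantees that all derivatives match across the boundary, so the extension is $C^\infty$. Moreover, this extension operation is continuous between the relevant $C^\infty$ topologies, so up to shrinking $\varepsilon'$ a little if needed, we may assume $d_\infty(\tilde f,\id)<\varepsilon'$ in $\Diff^\infty_c(\R)$.

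Next I would invoke Theorem \ref{Thm:localperfectionR} to write $\tilde f = c_1c_2c_3c_4c_5$ where each $c_j=[a_j,b_j]$ is an $\varepsilon$-commutator in $\Diff^\infty_c(\R)$. Inspecting the proof, each of the factors $a_j,b_j$ is one of the diffeomorphisms $h,\varphi,\theta_i,\theta'_i$ (or a conjugate of the flow maps $\phi^t_X$), all of which preserve $I$ setwise. Hence each $c_j$ also preserves~$I$, and restricting the identity $\tilde f=c_1\cdots c_5$ to $[0,1]$ yields $f=c_1|_I\cdots c_5|_I$, a decomposition into five commutators of elements of $\Diff^\infty_+([0,1])$.

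The only point to check carefully — and the main (minor) obstacle — is that the restrictions $a_j|_I,b_j|_I$ remain $\varepsilon$-close to the identity in $\Diff^\infty_+([0,1])$. This is clear, since the restriction map $\{g\in\Diff^\infty_c(\R)\colon g(I)=I\}\to \Diff^\infty_+(I)$ is continuous for the $C^\infty$ topologies and sends the identity to the identity; so by taking $\varepsilon$ slightly smaller at the start (replacing it by an $\varepsilon''\le\varepsilon$ whose $\varepsilon''$-neighborhood of $\id$ in $\Diff^\infty_c(\R)$ restricts into the $\varepsilon$-neighborhood of $\id$ in $\Diff^\infty_+([0,1])$, and using the $\varepsilon''$ version of Theorem \ref{Thm:localperfectionR}), we obtain exactly the statement, completing the proof.
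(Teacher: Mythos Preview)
Your proposal is correct and takes essentially the same approach as the paper: the paper's entire argument is the one-sentence remark preceding the statement, namely that all the commutator factors produced in the proof of Theorem~\ref{Thm:localperfectionR} (the model $h$, the conjugator $\varphi$, and the $\theta_i,\theta'_i$) preserve $I=[0,1]$, so the decomposition restricts. You have simply fleshed out the details (the ITI hypothesis ensuring a smooth extension by the identity, and the continuity of extension/restriction to handle the $\varepsilon$-closeness bookkeeping), which is exactly what the paper leaves implicit.
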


\section{Distortion and reducibility}
\label{s:dist-reduc}

%
%

Let $G=\mathrm{Diff}^{\infty}_+(\mathbb{S}^1)$ or $\mathrm{Diff}^{\infty}_c(\mathbb{R})$.

In this section, we prove Theorem \ref{Thm:conjugacyanddistortion}. The starting point of the proof of the theorem is Proposition \ref{Prop:distseqint}, which states that, for any sequence of commutators of diffeomorphisms which are all supported in a common segment and converge sufficiently fast to $0$, all of these commutators belong to a finitely generated group of diffeomorphisms, with a control on the wordlength of each of those elements. Using the local fragmented perfection result from the previous section, we generalize this statement as Proposition \ref{Prop:distseqcircle} to any sequence of elements in $G$ which converge sufficiently fast to the identity. To prove that an element $f$ is distorted, we want to decompose its powers as products of elements which are close to the identity, and then apply Proposition \ref{Prop:distseqcircle} to the elements appearing in such a decomposition. This then gives an estimate on the wordlength of the powers of the element $f$ and allows to prove that the element is distorted. But to do that, we have to make sure that, for any sufficiently large $n$ and any neighbourhood $\mathcal{U}$ of the identity, the element $f^n$ can be written as a large product of a small number of elements of $\mathcal{U}$, number which can be bounded independently of $n$ and $\mathcal{U}$. This is what we achieve with Lemma~\ref{Lem:smallpieces} and the definitions of quantities $a_{C,\eta}$ which are used to characterize distortion elements (see Proposition \ref{Prop:characdistortion}). These quantities behave well with respect to conjugacy, which allows to finish the proof of Theorem \ref{Thm:conjugacyanddistortion}.

We fix a distance $d_{\infty}$ for the $C^{\infty}$-topology on $G$. Let $I$ be an open interval and $J$ be another open interval with $\overline{J} \subset I$.\medskip

As said earlier, the following proposition is the main tool to prove Theorem \ref{Thm:conjugacyanddistortion}. It relies on a trick used by Thurston to prove that groups of smooth diffeomorphisms of manifolds are perfect. Similar statements have already been used in the articles \cite{CF}, \cite{Av08},\cite{Mil}, \cite{Mil2} about distortion elements, among others.

\begin{prop} \label{Prop:distseqint}
There exists a sequence $(\varepsilon_n)_n$ of positive real numbers such that the following property holds. For any sequences $(g_n)_{n \geq 0}$ and $(g'_n)_{n \geq 0}$ of diffeomorphisms supported in $J$ with, for any $n$, $d_{\infty}(g_n,Id)<\varepsilon_n$ and $d_{\infty}(g'_n,Id)<\varepsilon_n$, there exists a finite subset $S$ of  $\mathrm{Diff}^{\infty}_c(I)$ such that, for any $n$
\begin{enumerate}
\item the commutator $[g_n,g'_n]$ belongs to $<S>$;
\item $\ell_{S}([g_n,g'_n]) \leq 14 n+14$.
\end{enumerate}
\end{prop}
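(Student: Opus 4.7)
The plan is to use a Thurston-type encoding trick to pack each of the two sequences into a single smooth diffeomorphism of $I$, and then to exhibit each commutator $[g_n,g'_n]$ as a relatively short word in a finite generating set depending on the sequences.

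First I would fix an auxiliary diffeomorphism $T\in\Diff^\infty_c(I)$ having an attracting fixed point $p\in I\setminus\bar{J}$ which is infinitely tangent to the identity at~$p$, arranged so that the iterates $\{T^n(\bar{J})\}_{n\ge 0}$ form a pairwise disjoint family of closed subintervals of $I$ accumulating at $p$. The ITI hypothesis at $p$ gives sub-exponential control (via iterated Faà di Bruno) on the $C^\infty$-norms of the compositions $T^n$ restricted to any neighborhood of $p$. This lets me choose a sequence $(\eps_n)_n$ decreasing fast enough that for any $(g_n),(g'_n)$ supported in $J$ with $d_\infty(g_n,\id),d_\infty(g'_n,\id)<\eps_n$, the (commuting) infinite products
\[
G:=\prod_{n\ge 0} T^n g_n T^{-n},\qquad G':=\prod_{n\ge 0} T^n g'_n T^{-n}
\]
are well-defined elements of $\Diff^\infty_c(I)$, each supported in $\bigsqcup_{n\ge 0} T^n(\bar{J})$.

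Next, I would exploit the disjointness of the slice supports to derive a telescoping identity. Setting $G^{(k)}:=\prod_{n\ge k}T^n g_n T^{-n}$ and $G'^{(k)}:=\prod_{n\ge k} T^n g'_n T^{-n}$ (also well-defined smooth diffeomorphisms), a direct slice-by-slice computation shows that
\[
[G^{(k)},G'^{(k)}]\cdot[G^{(k+1)},G'^{(k+1)}]^{-1}=T^k[g_k,g'_k]T^{-k},
\]
since the left-hand product has a single non-trivial slice, at level~$k$. Conjugating by $T^n$ yields the key formula
\[
[g_n,g'_n]\;=\;T^{-n}\cdot[G^{(n)},G'^{(n)}]\cdot[G^{(n+1)},G'^{(n+1)}]^{-1}\cdot T^n.
\]

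\textbf{Main obstacle.} The heart of the proof is to realize each truncation $G^{(n)}$ and $G'^{(n)}$ as a word of \emph{linear} length in a fixed finite generating set $S\subset\Diff^\infty_c(I)$. The naive recurrence $G^{(n+1)}=G^{(n)}\cdot(T^n g_n T^{-n})^{-1}$ would give quadratic wordlength and moreover requires isolating each $g_n$ as a single element, which is not straightforward since $T^{-n}GT^n$ merely agrees with $g_n$ on $J$ while being non-trivial on the other slices. Following the classical device of \cite{CF,Av08,Mil,Mil2}, I would enlarge $S$ beyond $\{G,G',T\}$ by a small number of carefully chosen auxiliary diffeomorphisms of $I$, tailored to the dynamics of $T$, so that conjugation by them ``peels off'' slices of the encoded elements, making $G^{(n)}$ and $G'^{(n)}$ expressible as words of length at most $an+b$ for small explicit constants $a,b$. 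Plugging into the telescoping identity and counting letters then yields the desired bound $\ell_S([g_n,g'_n])\le 14n+14$ (with some slack), and simultaneously establishes $[g_n,g'_n]\in\langle S\rangle$.

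The convergence of the infinite products, the telescoping identity, and the bookkeeping are standard once the auxiliary generators and the recursive expression for $G^{(n)}$ are in place; verifying the existence of these auxiliary generators, with exactly the right algebraic-dynamical properties relative to $T$, is where the real technical work lies.
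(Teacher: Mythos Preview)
Your encoding into $G,G'$ matches the paper's (their $h,F,F'$), and the telescoping identity is correct, but the extraction step is left as a black box, and your framing via the truncations $G^{(n)}$ is off-track. Conjugating $G$ by powers of $T$ \emph{shifts} slice indices rather than truncating them, and there is no evident way to produce $G^{(n)}$ as a linear-length word in any fixed finite set; the ``classical device'' in the references you cite does not attempt this. It extracts the $n$-th commutator \emph{directly}, bypassing truncations altogether.

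Here is the missing ingredient. Enlarge $J$ to $J'$ with $\bar J\subset J'$, $\bar{J'}\subset I$ and the $T^n(J')$ still pairwise disjoint, and add one generator $h'\in\Diff^\infty_c(J')$ that \emph{displaces} $J$, i.e.\ $h'(J)\cap J=\emptyset$. With $h'_n:=T^n h' T^{-n}$ supported in the single slice $T^n(J')$, commutating $G$, $G'$ or $G'G$ against $h'_n$ isolates the level-$n$ data, and one checks directly that
\[
\bigl(G\,h'_n\,G^{-1}h'^{-1}_n\bigr)\bigl(G'\,h'_n\,G'^{-1}h'^{-1}_n\bigr)\bigl(G^{-1}G'^{-1}\,h'_n\,G'G\,h'^{-1}_n\bigr)\;=\;T^n[g_n,g'_n]T^{-n},
\]
the unwanted conjugate pieces on $T^n(h'(J))$ cancelling. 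With $S=\{T,h',G,G'\}$ this gives $\ell_S([g_n,g'_n])\le 2n+6(2n+1)+8=14n+14$, which is exactly where the constant in the statement comes from. (Incidentally, your ITI hypothesis on $T$ at $p$ is unnecessary: continuity of conjugation by each fixed $T^n$ already furnishes suitable $\eps_n$ making the infinite products $C^\infty$ at $p$.)
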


\begin{proof}
Let $J'$ be an open interval with $\overline{J} \subset J' \subset \overline{J'} \subset I$.
Take a diffeomorphism $h$ in $\mathrm{Diff}^{\infty}_c(I)$ such that the intervals $h^{n}(J')$, for $n \geq 0$, are pairwise disjoint and the sequence of intervals $(h^{n}(J'))_{n \geq 0}$ converges to a point $p \in I$ for the Hausdorff topology. For any $n \geq 0$, take $\varepsilon_n>0$ sufficiently small so that, for any $g \in \mathrm{Diff}^{\infty}_c(J')$ with $d_{\infty}(g,Id) < \epsilon_n$, we have $$d_{\infty}(h^{n}gh^{-n},Id) < \frac{1}{n}.$$

Now, take sequences $(g_n)_{n \geq 0}$ and $(g'_n)_{n \geq 0}$ of diffeomorphisms which satisfy the hypothesis of Proposition \ref{Prop:distseqint}. We will construct a subset $S$ of  $\mathrm{Diff}^{\infty}_c(I)$ which satisfies the conclusions of Proposition \ref{Prop:distseqint}. Let $h'$ be a diffeomorphism in $\mathrm{Diff}_c(J')$ such that $h'(J)$ is disjoint from $J$. For any $n\geq 0$, we let $h'_{n}=h^{n}h' h^{-n}$. The diffeomorphism $h'_n$ is supported in $h^{n}(J')$ and sends the interval $h^{n}(J)$ to a disjoint interval.
Observe that, for any $k \neq l$, the diffeomorphisms $h^{k}g_k h^{-k}$ and $h^{l}g_l h^{-l}$ have disjoint support, namely the support of the first diffeomorphism is contained in $h^{k}(J)$ while the support of the second diffeomorphism is contained in $h^{l}(J)$. Moreover, as the sequence of diffeomorphisms $(h^{n}g_n h^{-n})_n$ converges to the identity in the $C^{\infty}$ sense, the following diffeomorphism
$$F= \prod_{n=0}^{\infty}h^{n}g_n h^{-n}$$
is infinitely differentiable at $p$ and defines a $C^\infty$ diffeomorphism supported in $I$. In the same way, one can define 
$$F'= \prod_{n=0}^{\infty}h^{n}g'_n h^{-n}.$$
We set $S=\left\{ h,h',F,F' \right\}$.

For any $n \geq 0$, let us look at the diffeomorphism $A=F h'_{n}F^{-1}(h'_{n})^{-1}$. As the diffeomorphism $h'_n$ is supported in $h^{n}(J')$, this diffeomorphism is equal to the identity outside $h^{n}(J')$. On $h^n(J')$, this diffeomorphism is equal to $h^{n}g_n h^{-n}$ on $h^{n}(J)$, to $h^{n}h'g_n(h')^{-1}h^{-n}$ on $h^{n}(h'(J))$ and is equal to the identity elsewhere. In the same way, we define $B=A=F'h'_{n}F'^{-1}(h'_{n})^{-1}$ and $C=F^{-1}F'^{-1}h'_{n}F'F(h'_{n})^{-1}$. Then
$$ABC= h^{n}[g_n,g'_n]h^{-n}.$$
Hence $[g_n,g'_n]=h^{-n}ABCh^{n} \in <S>$ and
$$\begin{array}{rcl}
 \ell_S([g_n,g'_n]) & \leq & 2n \ell_{S}(h) +6 \ell_{S}(h'_{n})+4 \ell_{S}(F)+4 \ell_{S}(F') \\
  & \leq & 2n+6.(2n+1)+8=14n+14.
\end{array}$$
\end{proof}

The following proposition is a generalisation of Proposition \ref{Prop:distseqint} to sequences of elements of~$G$. It is a consequence of Proposition \ref{Prop:distseqint} and Theorem \ref{Thm:locfragperf} or Theorem~\ref{Thm:localperfectionR}.

\begin{prop} \label{Prop:distseqcircle}
There exists a sequence of positive numbers $(\eta_n)_{n \geq 0}$ such that, for any sequence of diffeomorphisms $(f_n)_{n \geq 0}$ in $G$ with a common compact support and $d_{\infty}(f_n,Id) < \eta_n$, the following property holds: there exists a finite subset $S$ of  $G$ such that, for any $n$, the diffeomorphism $f_n$ belongs to $<S>$ and
$$\ell_S(f_n) \leq 70n+70.$$
\end{prop}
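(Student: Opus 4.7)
The plan is to combine Proposition \ref{Prop:distseqint} with the local perfection results from the previous section---Theorem \ref{Thm:locfragperf} when $G = \mathrm{Diff}^{\infty}_+(\Ss^1)$, and Theorem \ref{Thm:localperfectionR} when $G = \Diff^\infty_c(\R)$. The idea is that for $f_n$ sufficiently close to the identity, local perfection yields a decomposition $f_n = \prod_{i=1}^{N} [g_{n,i}, g'_{n,i}]$ into $N \le 5$ commutators, each of whose factors can be made arbitrarily close to the identity and supported in one of a finite family of intervals fixed once and for all. Applying Proposition \ref{Prop:distseqint} to each of the (at most five) sequences of commutator factors then directly produces the desired finite generating set together with a word length bound of $5(14n+14) = 70n+70$.

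First, I would set up the geometric data. In the case $G = \Diff^\infty_c(\R)$, pick an open interval $I$ containing both the common support of the $f_n$ and the supports of all the auxiliary diffeomorphisms used in the proof of Theorem \ref{Thm:localperfectionR} (namely $h$ and the $\theta_i, \theta'_i$ coming from Lemma \ref{Lem:destMat}), together with a smaller open interval $J$ with $\overline{J} \subset I$ containing these supports. In the circle case, pick a pair of open arcs $J_1, J_2$ covering $\Ss^1$ as in Theorem \ref{Thm:locfragperf}, together with larger arcs $I_1 \supset \overline{J_1}$ and $I_2 \supset \overline{J_2}$. Apply Proposition \ref{Prop:distseqint} to each such nested pair of intervals: this gives one or two positive sequences, from which I extract a single sequence $(\varepsilon_n)_n$ lying below them all.

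Next, I would use the local perfection theorem to choose $\eta_n > 0$ small enough that any element of $G$ lying $\eta_n$-close to the identity and supported in the common compact set decomposes as a product of at most five commutators $[g_{n,i}, g'_{n,i}]$ whose factors are $\varepsilon_n$-close to the identity and supported in the prescribed intervals. Doing this for each $f_n$ and applying Proposition \ref{Prop:distseqint} to each of the resulting (at most five) sequences of commutators, I would obtain finite subsets $S_1, \ldots, S_N \subset G$ (with $N \le 5$) such that, for every $n$, $\ell_{S_i}([g_{n,i}, g'_{n,i}]) \le 14n + 14$. Setting $S = S_1 \cup \cdots \cup S_N$, subadditivity of word length yields $\ell_S(f_n) \le N(14n+14) \le 70n + 70$.

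The main point to verify carefully is that the commutators produced by the local perfection theorems do have supports contained in intervals fixed a priori, independently of $n$ and of the specific $f_n$ being decomposed. This should follow by inspection of the proofs of Theorem \ref{Thm:locfragperf} and Theorem \ref{Thm:localperfectionR}, since all the auxiliary data there (vector fields, conjugating diffeomorphisms, interval coverings) are chosen before the specific element being decomposed. Modulo this bookkeeping, the proposition reduces to a clean combination of local perfection with Proposition \ref{Prop:distseqint}.
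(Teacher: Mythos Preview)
Your proposal is correct and follows essentially the same route as the paper: choose $(\varepsilon_n)$ from Proposition~\ref{Prop:distseqint}, feed it into the local perfection theorems to produce $(\eta_n)$, decompose each $f_n$ into a bounded number of commutators with small factors supported in fixed intervals, and apply Proposition~\ref{Prop:distseqint} to each of the resulting sequences. The only minor difference is that the paper treats the two cases separately and records the sharper bound $56n+56$ in the circle case (four commutators from Theorem~\ref{Thm:locfragperf}) before settling on $70n+70$ to cover the real-line case (five commutators from Theorem~\ref{Thm:localperfectionR}).
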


\begin{proof}

Let us prove first this theorem in the case where $G=\mathrm{Diff}^{\infty}_+(\mathbb{S}^1)$. Let $(\varepsilon_n)_n$ be the sequence given by Proposition \ref{Prop:distseqint}.
For each $n$, we apply Theorem \ref{Thm:locfragperf} with $\varepsilon=\varepsilon_n$, which gives a sequence $(\eta_n)_{n \geq 0}$ of positive real numbers. Now, take a sequence $(f_n)_n$ of diffeomorphisms which satisfies the hypothesis of Proposition~\ref{Prop:distseqcircle}. By Theorem \ref{Thm:locfragperf}, for any integer $n$, there exist two families of diffeomorphisms $(g_{i,n})_{1 \leq i \leq 4}$ and $(g'_{i,n})_{1 \leq i \leq 4}$ such that $g_{1,n},g'_{1,n},g_{4,n},g'_{4,n}$ are supported in $J_1$ while $g_{2,n},g'_{2,n},g_{3,n},g'_{3,n}$ are supported in $J_2$ with $$f_n=[g_{1,n},g'_{1,n}][g_{2,n},g'_{2,n}][g_{3,n},g'_{3,n}][g_{4,n},g'_{4,n}].$$

Moreover, each $g_{i,n}$ and $g'_{i,n}$ is $\varepsilon_n$-close to the identity. Hence, for any $1 \leq i \leq 4$, we can apply Proposition \ref{Prop:distseqint} to the sequence $([g_{i,n},g'_{i,n}])_{n \geq 0}$, to obtain a finite set $S_i$ of diffeomorphisms such that, for any $n$, the element $[g_{i,n},g'_{i,n}]$ belongs to the subgroup generated by $S_i$ and $\ell_{S_i}([g_{i,n},g'_{i,n}]) \leq 14n +14$. Take $S=S_1 \cup S_2 \cup S_3 \cup S_4$. Then, for any $n$, $f_n \in <S>$ and
$$\begin{array}{rcl}
\ell_S(f_n) & \leq & \displaystyle \sum_{i=1}^4 \ell_S([g_{i,n},g'_{i,n}]) \\
& \leq & 4(14n+14)=56n+56.
 \end{array}$$

The proof in the case where $G=\mathrm{Diff}^{\infty}_c(\mathbb{R})$ is similar, using Theorem \ref{Thm:localperfectionR} instead of Theorem~\ref{Thm:locfragperf}. As any small element is a product of $5$ commutators, we end up with an upper bound which is $70n +70$ instead of $56n +56$. Also, all the diffeomorphisms $f_n$ have to be supported in a common interval $I$ so that they are commutators of small elements supported in an open interval $J$ with $\overline{I} \subset J$. This last result is obtained by identifying $J$ with $\mathbb{R}$ and by applying Theorem \ref{Thm:localperfectionR}. We apply then Proposition \ref{Prop:distseqint} to those commutators of diffeomorphisms to finish the proof.
\end{proof}

To use Proposition \ref{Prop:distseqcircle} to prove that an element $f$ of $G$ is distorted, we need to decompose the powers of $f$ as a product of a uniformly bounded number of elements close to the identity. To achieve this, we start with the following lemma.

\begin{lem} \label{Lem:smallpieces}
There exists a constant $C_0>0$ such that, for any diffeomorphism $f$ in $G$ and any $\eta>0$, there exists a finite subset $S$ of $G$, with $|S| \leq C_0$ with the following properties.
\begin{enumerate}
\item The subset $S$ consists of diffeomorphisms which are $\eta$-close to the identity.
\item The diffeomorphism $f$ belongs to the subgroup generated by $S$.
\end{enumerate}
\end{lem}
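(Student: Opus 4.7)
The aim is to exhibit, given $f \in G$ and $\eta > 0$, a subset $S \subset G$ of cardinality at most an absolute constant $C_0$, consisting of $\eta$-close-to-identity diffeomorphisms, such that $f \in \langle S \rangle$.

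The core idea is to express $f$ as a product of an absolutely bounded number of time-$1$ maps of smooth complete flows. Granting this, writing $f = \phi_{X_1}^1 \cdots \phi_{X_k}^1$ with $k \le C_0$ and each $X_i$ a smooth vector field generating a complete flow in $G$, I would set $s_i := \phi_{X_i}^{1/N_i}$ with $N_i$ large enough to ensure $d_\infty(s_i, \id) < \eta$. Since $\phi_{X_i}^1 = s_i^{N_i} \in \langle s_i \rangle$, the set $S := \{s_1, \dots, s_k\}$ has $|S| \le C_0$, consists of $\eta$-small elements, and satisfies $f \in \langle S \rangle$.

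To establish the decomposition of $f$ into a bounded product of flow maps, I would first reduce, via fragmentation in the circle case, to the real-line setting: any element of $\Diff^\infty_+(\Ss^1)$ splits as a product of a bounded number of diffeomorphisms each supported in a proper arc, and such a piece embeds as an element of $\Diff^\infty_c(\R)$. Then I would apply a Mather--Thurston style construction. Choose $h = \phi_Y^1$, where $Y$ is a smooth compactly supported vector field whose time-$1$ map has a hyperbolic attracting fixed point $p$ disjoint from $\mathrm{supp}(f)$, and such that the iterates $h^n(\mathrm{supp}(f))$ are pairwise disjoint and accumulate at $p$. Since $f$ is the identity, hence ITI, in a neighborhood of $p$, the infinite product
$$F := \prod_{n \ge 0} h^n f h^{-n}$$
defines a $C^\infty$ diffeomorphism of $\R$, and the Thurston identity $f = F h F^{-1} h^{-1}$ expresses $f$ as a commutator of $F$ with the flow map $h$. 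Iterating this reduction on $F$ --- and invoking the local perfection results (Theorems \ref{Thm:locfragperf} and \ref{Thm:localperfectionR}) once $F$ has been brought sufficiently close to the identity by repeated conjugation --- yields the sought decomposition of $f$ into boundedly many time-$1$ maps of smooth flows.

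The main obstacle will be the $C^\infty$ smoothness of the infinite product $F$ at the accumulation point $p$, which hinges on the delicate interplay between the contraction rate of $h$ near $p$ and the vanishing of all derivatives of $f - \id$ at $p$, combined with careful control on the recursion so that it terminates in a number of steps bounded independently of $f$ and $\eta$.
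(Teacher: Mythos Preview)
Your reduction to flow maps is the right idea and matches the paper's strategy: once $f=\phi^1_{X_1}\cdots\phi^1_{X_k}$ with $k\le C_0$, taking $S=\{\phi^{1/N}_{X_i}\}$ for large $N$ does the job. The disagreement is in how the bound $k\le C_0$ is obtained.

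The paper observes that $\nu(f):=$ the minimal number of time-$1$ maps of $C^\infty$ flows needed to write $f$ is a conjugation-invariant norm on $G$ (subadditive, symmetric, vanishing only at $\id$, conjugation-invariant because the conjugate of a flow map is a flow map). Simplicity of $G$ guarantees $\nu$ is finite-valued. One then invokes the theorem of Burago--Ivanov--Polterovich that every conjugation-invariant norm on $\Diff^\infty_+(\Ss^1)$ or $\Diff^\infty_c(\R)$ is bounded, and this bound is $C_0$.

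Your route via the Thurston infinite product breaks down precisely at the point you flag as the ``main obstacle'', and it is not surmountable. Linearize $h$ near $p=0$ by Sternberg so that $h(x)=\lambda x$ there with $0<\lambda<1$, and take $f$ supported in a fundamental domain. On $h^n([\lambda\eps,\eps])$ one has $F(x)-x=\lambda^n(f-\id)(\lambda^{-n}x)$, hence
\[
(F-\id)^{(k)}(x)=\lambda^{-n(k-1)}\,(f-\id)^{(k)}(\lambda^{-n}x).
\]
For $k\ge2$ this blows up as $n\to\infty$ unless $f=\id$, so $F$ is only $C^1$ at $p$, not $C^2$. That $f-\id$ vanishes identically \emph{near} $p$ is irrelevant: what matters is the behaviour of $F-\id$ on the shrinking annuli $h^n(K)$, and that is governed by the values of $(f-\id)^{(k)}$ on the \emph{fixed} domain $K$, rescaled by $\lambda^{-n(k-1)}$. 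This is exactly why Proposition~\ref{Prop:distseqint} needs a sequence $(g_n)$ with $d_\infty(g_n,\id)<\eps_n\to0$ rather than a fixed $g$. A parabolic $h$ does not help: any $h$ whose iterates displace $K$ and accumulate at a point forces enough expansion in $(h^{-n})'$ to make higher derivatives of the conjugates diverge. Consequently $FhF^{-1}$ is not a $C^\infty$ flow map, the recursion on $F$ never improves matters, and the appeal to Theorems~\ref{Thm:locfragperf} and~\ref{Thm:localperfectionR} cannot rescue the argument either: they produce commutators of small diffeomorphisms, not flow maps, and only once the input is already small.
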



\begin{proof}
Observe that, if $f$ is the time-one map of a $C^{\infty}$ flow $(\varphi^{t})_{t \geq 0}$ on the circle then we can write $f= \left( \varphi^{\frac{1}{n}} \right)^{n}$, with sufficiently large $n$, so that 
properties 1 and 2 above hold by taking $S= \left\{\varphi^{\frac{1}{n}} \right\}$.

Recall that the group $G$ is a simple group. As a conjugate of a flow by a diffeomorphism is still a flow, the subgroup of $G$ generated be time-one maps of flows is a nontrivial normal subgroup of $G$. Hence any diffeomorphism in $G$ can be written as a product of time-one maps of flows.

For any diffeomorphism $f$ in $G$, we define $\nu(f)$ as the minimal number of time-one maps of flows required to write $f$ as a product of time-one maps of flows. Observe that, for any diffeomorphisms $f$ and $g$, $\nu(fg) \leq  \nu(f)+\nu(g)$, $\nu(gfg^{-1})=\nu(f)$, $\nu(f^{-1})=\nu(f)$ and $\nu(f)=0$ if and only if $f=Id$. Hence the map $\nu$ is a conjugation-invariant norm on the group $G$ in the sense of the article \cite{MR2509711} by Burago, Ivanov and Polterovich. By Theorem 1.11 of this article in the case where $G=\mathrm{Diff}_+^{\infty}(\mathbb{S}^1)$ and by Theorem 1.17 in the case where $G=\mathrm{Diff}_c^{\infty}(\mathbb{R})$, such a norm $\nu$ has to be bounded above by some constant $C_0>0$.
Hence, for any diffeomorphism $f$ in $\mathrm{Diff}^{\infty}_+(\mathbb{S}^1)$, there exists $k \leq C_0$ and $C^{\infty}$ flows $(\varphi^{t}_i)_{t \in \mathbb{R}}$, for $1 \leq i \leq k$ such that
$$f =\varphi_1^1 \varphi_2^1 \ldots \varphi_k^{1}.$$
Take $n$ sufficiently large so that each diffeomorphism $\varphi^{\frac{1}{n}}_i$ is $\eta$-close to the identity. Then  
$$f =\left(\varphi_1^{\frac{1}{n}}\right)^n \left(\varphi_2^{\frac{1}{n}}\right)^n \ldots \left(\varphi_k^{\frac{1}{n}}\right)^n.$$

Then Properties 1 and 2 of the lemma hold with 
$$S= \left\{ \varphi_i^{\frac{1}{n}} \ | \ 1 \leq i \leq k \right\}.$$
Moreover $|S| \leq k \leq C_0$.

By looking closely at the proofs in \cite{MR2509711}, we can see that we can take $C_0 = 16$ when $G=\mathrm{Diff}_+^{\infty}(\mathbb{S}^1)$ and $C_0=14$ when $G=\mathrm{Diff}_c^{\infty}(\mathbb{R})$.

\end{proof}

Take $C \geq C_0$ and $\eta>0$. Fix a diffeomorphism $f$ in $G$. By Lemma \ref{Lem:smallpieces}, there exists a subset $S$ of $G$ which consists of diffeomorphisms which are $\eta$-close to the identity with $|S| \leq C$ such that $f \in <S>$. We will call a $(\eta,C)$-decomposition of $f$ any decomposition of $f$ as a product of elements of $S \cup S^{-1}$ for such an $S$. Now, let $A_{C,\eta}(f)$ be the infimum of $\ell_S(f)$ over all such subsets $S$. We will call a minimal $(\eta,C)$-decomposition of $f$ any $(\eta,C)$-decomposition of $f$ of minimal word length.

Finally, we define
$$a_{C,\eta}(f)=\limsup_{n \rightarrow +\infty} \frac{A_{C,\eta}(f^{n})}{n}.$$

\begin{prop}[Some properties of $A$ and $a$] \label{Prop:properties}
Fix $C \geq C_0$ and $\eta>0$. For any $f,g,h \in G$ and any $n>0$
\begin{enumerate}
\item $A_{2C,\eta}(fg) \leq A_{C,\eta}(f)+A_{C,\eta}(g).$
\item $A_{C,\eta}(f^{n}) \leq n A_{C,\eta}(f)$.
\item $A_{2C,\eta}(hfh^{-1}) \leq A_{C,\eta}(f)+ 2A_{C,\eta}(h)$.
\item  $\displaystyle a_{2C,\eta}(f) \leq \frac{a_{C,\eta}(f^{n})}{n}$.
\item $a_{2C,\eta}(f) \leq a_{C,\eta}(hfh^{-1})$.
\end{enumerate}
\end{prop}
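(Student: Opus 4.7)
All five inequalities should follow directly from the definitions, by constructing an explicit decomposition on the left-hand side from given decompositions on the right. The key observation to use throughout is that $A_{C,\eta}$ is invariant under inversion: $A_{C,\eta}(g^{-1})=A_{C,\eta}(g)$ for every $g\in G$, since any decomposition of $g$ in $S\cup S^{-1}$ inverts to a decomposition of $g^{-1}$ of the same length using the same set $S$.

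For item 1, the plan is to pick nearly minimal $(\eta,C)$-decompositions of $f$ and $g$, with generating sets $S_f$ and $S_g$ of cardinality at most $C$ consisting of diffeomorphisms $\eta$-close to the identity. The union $S:=S_f\cup S_g$ has cardinality at most $2C$ and still consists of $\eta$-close-to-identity elements, and concatenating the two decompositions yields a decomposition of $fg$ in $S\cup S^{-1}$ of length $\ell_{S_f}(f)+\ell_{S_g}(g)$, which is arbitrarily close to $A_{C,\eta}(f)+A_{C,\eta}(g)$. Item 2 is immediate: a minimal decomposition $f=s_1\cdots s_\ell$ in $S\cup S^{-1}$ yields $f^n=(s_1\cdots s_\ell)^n$ of length $n\ell$ in the same $S\cup S^{-1}$. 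For item 3, the idea is to take nearly minimal $(\eta,C)$-decompositions of $f$ and $h$ with generating sets $S_f$ and $S_h$, combine them into $S=S_f\cup S_h$ of cardinality at most $2C$, and express $hfh^{-1}$ as the concatenation of the decompositions of $h$, $f$, and $h^{-1}$ in $S\cup S^{-1}$, of total length $2A_{C,\eta}(h)+A_{C,\eta}(f)$.

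Items 4 and 5 should then follow as standard $\limsup$ corollaries. For item 4, given $k\in\N$, I would write $k=nm+r$ with $0\le r<n$ and apply items 1 and 2 to obtain
$$A_{2C,\eta}(f^k)\le A_{C,\eta}(f^{nm})+A_{C,\eta}(f^r)\le A_{C,\eta}(f^{nm})+(n-1)A_{C,\eta}(f).$$
Dividing by $k\ge nm$ and passing to $\limsup_{k\to\infty}$ makes the error term vanish and bounds the leading term by $a_{C,\eta}(f^n)/n$. For item 5, the plan is to apply item 3 to the identity $f^n=h^{-1}(hfh^{-1})^nh$, yielding $A_{2C,\eta}(f^n)\le A_{C,\eta}((hfh^{-1})^n)+2A_{C,\eta}(h)$; dividing by $n$ and taking $\limsup$ kills the constant term and gives the desired inequality.

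There is no real obstacle here; this proposition is essentially bookkeeping that sets up the invariants $a_{C,\eta}$, whose substantive use will come in Proposition~\ref{Prop:characdistortion}. The only minor subtlety is that the infimum defining $A_{C,\eta}$ might not be attained, so one works with $\varepsilon$-nearly minimal decompositions and lets $\varepsilon\to 0$ in the end, which is harmless.
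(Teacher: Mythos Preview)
Your proposal is correct and follows essentially the same approach as the paper's proof: items 1--3 by concatenating (nearly) minimal decompositions, item 4 via Euclidean division and items 1--2, and item 5 via item 3 applied to $f^n = h^{-1}(hfh^{-1})^n h$. Your handling of item 4 is in fact slightly cleaner than the paper's: you keep the leading term as $A_{C,\eta}(f^{nm})$ and pass directly to the $\limsup$, whereas the paper inserts the intermediate bound $q_m A_{C,\eta}(f^n)$ via item 2, which taken literally yields only $A_{C,\eta}(f^n)/n$ rather than $a_{C,\eta}(f^n)/n$; your version avoids this slip.
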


\begin{proof}
The first item is proved by concatenating minimal $(\eta,C)$-decompositions for $f$ and for $g$. The second one is obtained by concatenating $n$ times a minimal $(\eta,C)$-decomposition for $f$. A minimal $(\eta,C)$-decomposition for $h$ yields a minimal $(\eta,C)$-decomposition for $h^{-1}$ with the same generating set, by taking inverses. The third item is then proved by concatenating those decompositions with a minimal $(\eta,C)$-decomposition for $f$.

Let us prove the fourth point. For any integer $m>0$, we write $m=q_m n+r_m$ the euclidean division of $m$ by $n$. Then, using the two first items,
$$\begin{array}{rcl}
A_{2C,\eta}(f^{m}) & \leq & A_{C,\eta}(f^{q_m n })+A_{C,\eta}(f^{r_m }) \\
 & \leq  & q_{m}A_{C,\eta}(f^{ n })+A_{C,\eta}(f^{r_m }).
 \end{array}$$
Dividing by $m$ and taking the limit superior gives the wanted inequality.

Finally, for any $n>0$, by the third point,
$$ A_{2C,\eta}(f^n) \leq A_{C,\eta}(hf^nh^{-1})+ 2A_{C,\eta}(h).$$
Dividing by $n$ and taking the limit superior gives the wanted inequality.

\end{proof}

The following proposition is the reason why we are interested in the quantities $a_{C,\eta}$.

\begin{prop} \label{Prop:characdistortion}
An element $f$ of $G$ is distorted if and only if there exists $C>0$ such that, for any $\eta>0$,
$$a_{C,\eta}(f)=0.$$
\end{prop}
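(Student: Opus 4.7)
The plan is to prove both implications, the reverse being the substantive one.

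\emph{Forward direction.} Suppose $f$ is distorted, so there is a finite set $T \subset G$ with $\ell_T(f^n)/n \to 0$. For any $\eta > 0$, apply Lemma \ref{Lem:smallpieces} to each $t \in T$ to obtain a set $S_t$ of at most $C_0$ diffeomorphisms $\eta$-close to the identity with $t \in \langle S_t \rangle$. The union $S := \bigcup_{t \in T} S_t$ has cardinality at most $C_0 |T|$, and expanding each letter of a minimal $T$-word for $f^n$ into an $S$-word bounds $A_{C_0 |T|, \eta}(f^n)$ by a constant (depending on $\eta$) times $\ell_T(f^n)$. Dividing by $n$ and taking $\limsup$ yields $a_{C_0 |T|, \eta}(f) = 0$, with the integer $C := C_0 |T|$ independent of $\eta$.

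\emph{Reverse direction.} Fix $C$ as in the hypothesis and let $(\tilde\eta_n)_{n \geq 0}$ be the sequence supplied by Proposition \ref{Prop:distseqcircle}, which we may assume non-increasing. For each $k \geq 1$, since $a_{C, \tilde\eta_{Ck}}(f) = 0$ by hypothesis, choose $m_k$, strictly increasing in $k$, such that
\[
A_{C, \tilde\eta_{Ck}}(f^{m_k}) \cdot (70 C k + 70) \leq m_k/k,
\]
together with a minimal $(\tilde\eta_{Ck}, C)$-decomposition of $f^{m_k}$ using generators $g_{k,1}, \ldots, g_{k,C}$. Reindexing these as $h_{(k-1)C + i} := g_{k,i}$ produces a sequence $(h_j)_{j \geq 1}$ of elements of $G$ with $d_\infty(h_j, \id) < \tilde\eta_j$, by monotonicity of $(\tilde\eta_n)$. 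Proposition \ref{Prop:distseqcircle} then yields a single finite set $S \subset G$ with $h_j \in \langle S \rangle$ and $\ell_S(h_j) \leq 70j + 70$ for all $j$. Substituting each letter of the above decomposition of $f^{m_k}$ by its $S$-expansion gives $\ell_S(f^{m_k}) \leq m_k/k$, so $\ell_S(f^{m_k})/m_k \to 0$. Since $(\ell_S(f^n))_n$ is subadditive, Fekete's lemma forces $\lim_n \ell_S(f^n)/n = 0$, proving that $f$ is distorted in $\langle S \rangle$, hence in $G$.

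\emph{Main obstacle.} The crux is coordinating the decreasing threshold $\tilde\eta_{Ck}$ demanded by Proposition \ref{Prop:distseqcircle} with the existence of $m_k$ satisfying the above bound: one must exploit $\limsup_m A_{C, \tilde\eta_{Ck}}(f^m)/m = 0$ for each fixed $k$ to extract $m_k$'s whose ratios $A_{C, \tilde\eta_{Ck}}(f^{m_k})/m_k$ decay faster than $1/(k(70Ck+70))$, in order to absorb the linear-in-$j$ growth of $\ell_S(h_j)$ provided by Proposition \ref{Prop:distseqcircle}. A secondary technical point, relevant only for $G = \mathrm{Diff}_c^\infty(\R)$, is ensuring that all generators $g_{k,i}$ share a common compact support as required by Proposition \ref{Prop:distseqcircle}; this can be arranged by refining Lemma \ref{Lem:smallpieces} so that the small generators it produces are supported in a fixed compact neighborhood of $\mathrm{supp}(f)$.
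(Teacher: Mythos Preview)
Your argument is essentially the same as the paper's, and correct in all substantive respects. The forward direction is identical; in the reverse direction you replace the paper's $\lambda_k=\min_{n\le kC-1}\eta_n$ by the equivalent device of assuming the sequence $(\tilde\eta_n)$ non-increasing, and you correctly identify the common-support issue in the $\Diff^\infty_c(\R)$ case (which the paper leaves implicit).

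There is one small gap at the very end. You conclude by invoking Fekete's lemma on $(\ell_S(f^n))_n$ to get $\lim_n \ell_S(f^n)/n=0$, and assert that $f$ is distorted in $\langle S\rangle$. But nothing in your construction guarantees $f\in\langle S\rangle$: you only know $f^{m_k}\in\langle S\rangle$ for each $k$, so a priori only $f^d\in\langle S\rangle$ where $d=\gcd_k m_k$. If $d>1$, the sequence $(\ell_S(f^n))_n$ takes the value $+\infty$ along the $n$ not divisible by $d$, and Fekete's lemma does not yield convergence. The paper handles this simply by enlarging the generating set to $S'\cup\{f\}$; with that single addition your argument goes through verbatim. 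Alternatively, you could choose two of the $m_k$ coprime when extracting them.
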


\begin{proof}
Suppose that $f$ is distorted. Let $S$ be a finite set such that $f \in <S>$ and 
$$\lim_{n \rightarrow +\infty} \frac{\ell_S(f^{n})}{n}=0.$$
Fix $\eta>0$. Let $M_{\eta}= \max_{s \in S} A_{C_0,\eta}(s)$. Then, for any $n>0$, by concatenating minimal $(C_0,\eta)$-decompositions of each element of $S$, we obtain
$$\frac{A_{C_0 |S|,\eta}(f^{n})}{n} \leq \frac{\ell_S(f^{n})}{n}M.$$
Taking the limit as $n$ goes to $+\infty$, we obtain that $a_{C_0|S|,\eta}(f)=0$.\medskip

Conversely, suppose that there exists $C>0$ such that, for any $\eta>0$,
$$a_{C,\eta}(f)=0.$$
Fix an integer $k \geq 1$. Let 
$$\lambda_k= \min_{n \leq kC-1}  \eta_{n},$$
where the sequence $(\eta_n)_{n \geq 0}$ is given by Proposition \ref{Prop:distseqcircle}.
As $a_{C,\lambda_k}(f)=0$, we can find an integer $n_{k}$ such that
$$\frac{A_{C,\lambda_k}(f^{n_k})}{n_k} (70Ck+70) \leq \frac{1}{k}.$$
We denote by $S_k$ a finite subset of $G$ with $|S_k| \leq C$ such that a decomposition of $f^{n_k}$ into a product of elements of $S_k \cup S_k^{-1}$ is a minimal $(\lambda_k,C)$-decomposition for $f^{n_k}$.

Now, we define a sequence $(f_n)_{n \geq 0}$ of elements of $G$ by listing successively the elements of the sets $S_k$, for $k \geq 1$. By the choice of $(\lambda_k)_k$, the diffeomorphism $f_n$ is $\eta_n$-close to the identity. Proposition \ref{Prop:distseqcircle} then gives a finite subset $S'$ of $G$ such that all the elements of the $S_k$s belong to the group generated by $S'$ and elements of $S_k$ have a word length smaller than $70Ck+70$. Let $S=S'\cup \left\{ f \right\}$. Then, by using the minimal $(\lambda_k,C)$-decomposition of $f^{n_k}$ into elements of $S_{k}$ and the word length estimate of the elements of $S_k$, we obtain

$$\begin{array}{rcl}
\displaystyle \frac{\ell_{S}(f^{n_k})}{n_k}  & \leq & \displaystyle   \frac{A_{C,\lambda_k}(f^{n_k})}{n_k} (70Ck+70) \\
 
 & \leq & \displaystyle \frac{1}{k}.
 \end{array}
 $$
Hence $\displaystyle \frac{l_{S}(f^{n_k})}{n_k} \rightarrow 0$ and $f$ is distorted.

\end{proof}

Proposition \ref{Prop:characdistortion} allows us to finish the proof of Theorem \ref{Thm:conjugacyanddistortion}.

\begin{proof}[Proof of Theorem \ref{Thm:conjugacyanddistortion}]
By Proposition \ref{Prop:characdistortion}, as the element $g$ is distorted in $G$, there exists $C>0$ such that, for any $\eta>0$, $a_{C,\eta}(g)=0$. Fix $\eta>0$.
By Proposition \ref{Prop:characdistortion}, it suffices to prove that $a_{8C,\eta}(f)=0$. Fix $n>0$. Take an integer $m>0$ such that $h_{m}f^nh_{m}^{-1}g^{-n}$ is $\eta$-close to the identity. By the fourth item of Proposition \ref{Prop:properties}
$$a_{8C,\eta}(f) \leq \frac{a_{4C,\eta}(f^{n})}{n}.$$
Using the fifth item of Proposition \ref{Prop:properties}, we obtain
$$ \frac{a_{4C,\eta}(f^{n})}{n} \leq \frac{a_{2C,\eta}(h_{m}f^{n}h_{m}^{-1})}{n}. $$

However, by the first item of Proposition \ref{Prop:properties},
$$A_{2C,\eta}(h_{m}f^{n}h_{m}^{-1}) \leq A_{C,\eta}(h_{m}f^{n}h_{m}^{-1}g^{-n})+A_{C,\eta}(g^{n}) \leq 1+ A_{C,\eta}(g^{n})$$
and, by the second item of Proposition \ref{Prop:properties}, 
$$a_{2C,\eta}(h_{m}f^{n}h_{m}^{-1}) \leq A_{2C,\eta}(h_{m}f^{n}h_{m}^{-1})$$
so that
$$ \frac{a_{4C,\eta}(f^{n})}{n} \leq \frac{1}{n}+\frac{A_{C,\eta}(g^{n})}{n}.$$
Therefore
$$a_{8C,\eta}(f)\leq  \frac{1}{n}+\frac{A_{C,\eta}(g^{n})}{n}.$$
Taking the limit superior of the right-hand side, we obtain
$$a_{8C,\eta}(f) \leq a_{C,\eta}(g)=0.$$
\end{proof}

\section{Extension of Theorem \ref{Thm:conjugacyanddistortion} to any manifold}
\label{s:extension}

Let $G$ be either the identity component $\mathrm{Diff}_0^{\infty}(M)$ of the group of $C^{\infty}$-diffeomorphisms of a closed manifold $M$ of dimension at least $2$ or the group $\mathrm{Diff}_c^{\infty}(\mathbb{R}^d)$ of compactly supported $C^{\infty}$-diffeomorphisms of $\mathbb{R}^d$, with $d \geq 2$. In this last case, we let $M=\mathbb{R}^d$.

In this section, we explain why Theorem \ref{Thm:conjugacyanddistortion} extends to such a group. The structure of the proof is the same as in the $1$-dimensional case but other existing results are required in this case to achieve the proof of the theorem. Below, we explain how to prove the analogs of Propositions \ref{Prop:distseqint}, \ref{Prop:distseqcircle}, Lemma \ref{Lem:smallpieces}, Proposition \ref{Prop:properties} and Proposition \ref{Prop:characdistortion} in this general case.

With $I$ and $J$ being open balls instead of open intervals, Proposition \ref{Prop:distseqint} and its proof are still valid. 

The statement of Proposition \ref{Prop:distseqcircle} is still valid for those more general groups with one little change: the constants $70$ must be replaced by constants which depend on the manifold. To be precise, those $70$ have to be changed into $28 \mathrm{dim}(M) \mathcal{C}(M)$, where $\mathcal{C}(M)$ is the minimal number of embedded closed balls in $M$ whose interiors cover $M$ if $M \neq \mathbb{R}^d$ and is equal to $1$ if $M = \mathbb{R}^d$. 

The proof of Proposition \ref{Prop:distseqcircle} is the same but one needs to use the local perfection results adapted to higher dimensional manifolds. Namely, we use the two following results.

Suppose $M \neq \mathbb{R}^d$ and fix a covering $\mathcal{U}=(U_i)_{1 \leq i \leq \mathcal{C}}$ of $M$ by interiors of closed balls, with $\mathcal{C}=\mathcal{C}(M)$. The following lemma is classical (see Theorem 2.2.1 of \cite{Bou} for a proof).

\begin{lem}[Fragmentation lemma] \label{l:frag}
Let $\varepsilon >0$. There exists $\eta>0$ such that, for any diffeomorphism $f$ in $\mathrm{Diff}_0^{\infty}(M)$ with $d_{\infty}(f,Id_M) < \eta$, there exists a family of diffeomorphisms $(f_i)_{1 \leq i \leq \mathcal{C}}$ such that, for each $i$, $f_i$ is supported in $U_i$ and is $\varepsilon$-close to the identity, and 
$$f=f_1\circ f_2 \circ \ldots \circ f_{\mathcal{C}}.$$
\end{lem}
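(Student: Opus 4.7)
The plan would be to reduce to the standard isotopy-plus-partition-of-unity argument used, for instance, in \cite{Bou}. First, I would fix a Riemannian metric on $M$ and observe that, provided $\eta$ is chosen small enough, any $f \in \mathrm{Diff}_0^\infty(M)$ with $d_\infty(f,\mathrm{id}) < \eta$ is of the form $f(x) = \exp_x Y(x)$ for a unique small smooth vector field $Y$, whose $C^\infty$-norm is controlled by $\eta$. Setting $F_t(x) := \exp_x(tY(x))$ then provides a smooth isotopy $F:[0,1]\times M \to M$ from $\mathrm{id}$ to $f$, whose $C^\infty$-distance to the trivial isotopy $(t,x)\mapsto x$ is still controlled by $\eta$.

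Next, I would pick once and for all a smooth partition of unity $(\rho_i)_{1 \le i \le \mathcal{C}}$ subordinate to $(U_i)_{1 \le i \le \mathcal{C}}$ and set $\sigma_k := \sum_{i \le k} \rho_i$ for $0 \le k \le \mathcal{C}$, so that $\sigma_0 \equiv 0$ and $\sigma_{\mathcal{C}} \equiv 1$. Define $h_k(x) := F_{\sigma_k(x)}(x)$ and $f_k := h_{k-1}^{-1} \circ h_k$. Since $h_0 = \mathrm{id}$ and $h_{\mathcal{C}} = f$, the product telescopes into $f_1 \circ f_2 \circ \cdots \circ f_{\mathcal{C}} = f$. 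The support condition is immediate: wherever $\rho_k$ vanishes, $\sigma_{k-1} = \sigma_k$, hence $h_{k-1} = h_k$, and so $f_k$ is the identity; therefore $\mathrm{supp}(f_k) \subset \mathrm{supp}(\rho_k) \subset U_k$.

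It then remains to control the $C^\infty$-size of each $f_k$. The chain rule (together with the Faà di Bruno formula for higher derivatives) shows that if $F$ is $C^r$-close to the trivial isotopy, then each $h_k$ is $C^r$-close to $\mathrm{id}$; the inverse function theorem then ensures that $h_k$ is indeed a smooth diffeomorphism of $M$ and that $h_{k-1}^{-1}$ is also $C^r$-close to $\mathrm{id}$. Composing, $f_k$ is $C^r$-close to $\mathrm{id}$ with bounds depending only on $\mathcal{C}$, the fixed partition of unity and the fixed metric, so choosing $\eta$ small enough at the outset guarantees $d_\infty(f_k,\mathrm{id}) < \varepsilon$ for every $k$.

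The main obstacle is purely bookkeeping: one must propagate quantitative $C^\infty$-control through the exponential map, the reparametrisation $t \mapsto \sigma_k(x)$, and the inversion of $h_{k-1}$. Since all three operations are continuous for the $C^\infty$-topology on diffeomorphism groups, no essential difficulty arises, and the argument is entirely classical.
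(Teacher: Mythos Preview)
The paper does not give its own proof of this lemma; it simply records it as classical and points to Theorem~2.2.1 of \cite{Bou}. Your sketch is precisely the standard argument one finds there (and in Banyaga, Palis--Smale, etc.): build a short $C^\infty$-isotopy from $\id$ to $f$, reparametrise it by the cumulative sums $\sigma_k$ of a fixed partition of unity, and set $f_k=h_{k-1}^{-1}\circ h_k$. The telescoping, the support computation, and the observation that the whole construction is continuous at $\id$ in the $C^\infty$ topology are all correct as you wrote them, so there is nothing to compare and nothing to fix.
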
 
 
To get the local perfection result we need, we combine this lemma with the following theorem by Haller, Rybicki and Teichmann (see \cite{HRT} and also \cite{Mann} and the appendix of \cite{Mil} for more elementary realisations of the ideas of Haller, Rybicki and Teichmann).

\begin{thm}[Local perfection for diffeomorphisms supported in balls] \label{t:localperfhigher}
Let $\varepsilon >0$. There exists $\eta>0$ such that, for any diffeomorphism $f$ in $\mathrm{Diff}_c^{\infty}(\mathbb{R}^d)$ which is supported in the open unit ball $B$ with $d_{\infty}(f,Id) < \eta$, there exists a family of diffeomorphisms $(f_{i,j})_{1 \leq i \leq d, 1 \leq j \leq 4}$ in $\mathrm{Diff}^{\infty}_c(\mathbb{R}^d)$ which are $\varepsilon$-close to the identity such that
$$f=[f_{1,1},f_{1,2}][f_{1,3},f_{1,4}][f_{2,1},f_{2,2}][f_{2,3},f_{2,4}] \ldots [f_{d,1},f_{d,2}][f_{d,3},f_{d,4}].$$ 
\end{thm}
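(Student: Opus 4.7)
The plan is to follow the strategy of Haller-Rybicki-Teichmann, which decomposes the problem by coordinate directions and then invokes a Mather-Thurston infinite product trick in each direction, crucially exploiting $d \ge 2$. The first step is a \emph{directional fragmentation}: for $f$ sufficiently $C^\infty$-close to the identity and supported in the unit ball $B$, one writes $f = f^{(1)} \circ \cdots \circ f^{(d)}$, where each $f^{(i)}$ is compactly supported, $C^\infty$-close to the identity, and acts as a one-dimensional diffeomorphism on each line of $\R^d$ parallel to $e_i$. I would obtain this by picking a smooth isotopy $(f_t)_{t \in [0,1]}$ from $\id$ to $f$ whose generating time-dependent vector field decomposes as $X_t = \sum_i X_t^i\,\partial_i$, and applying a Lie-Trotter type splitting corrected iteratively by absorbing commutator errors of near-identity diffeomorphisms (which shrink quadratically in the individual sizes, modulo tame estimates).

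The second step expresses each $f^{(i)}$ as a product of two commutators of small diffeomorphisms by a Thurston-Mather argument along $e_i$. Choose a compactly supported $\phi_i \in \Diff^\infty_c(\R^d)$ which acts as the translation by $e_i$ on a neighbourhood of $\mathrm{supp}(f^{(i)})$ and whose iterates $\phi_i^n(\mathrm{supp}(f^{(i)}))$, $n \ge 0$, are pairwise disjoint and accumulate to a point. Since $f^{(i)}$ preserves the foliation by $e_i$-lines, one may write $f^{(i)} = \prod_{n\ge 0} h_n$ with small factors $h_n$ fragmented leafwise, and then set $H_i = \prod_{n \ge 0} \phi_i^n h_n \phi_i^{-n}$: the factors have disjoint shrinking supports, so the product defines a $C^\infty$ diffeomorphism close to the identity, and a telescoping computation identical to the one in Proposition \ref{Prop:distseqint} yields $f^{(i)}$ as a single commutator $[H_i, \phi_i]$. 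Since $\phi_i$ is not itself close to the identity, a second commutator is needed: using a direction $e_j$ transverse to $e_i$ (available because $d \ge 2$), I would write $\phi_i = [\alpha_i, \beta_i]$ with $\alpha_i, \beta_i$ close to the identity via an analogous infinite product construction in direction $e_j$. Expanding $[H_i, [\alpha_i, \beta_i]]$ and using disjoint-support identities produces $f^{(i)}$ as exactly two commutators $[f_{i,1}, f_{i,2}][f_{i,3}, f_{i,4}]$, and concatenating over $i$ gives the desired expression as a product of $2d$ commutators.

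The main obstacle is the first step. A naive Lie-Trotter splitting only produces factors which approximately preserve the coordinate foliations, and promoting this to an exact decomposition with $C^\infty$-smallness control requires the tame estimates on compositions of near-identity diffeomorphisms developed in \cite{HRT}. A secondary delicate point in the second step is that the auxiliary diffeomorphism $\phi_i$ must simultaneously act as a translation on $\mathrm{supp}(f^{(i)})$, be compactly supported, and admit a commutator decomposition into small elements; accommodating all three demands is precisely where the hypothesis $d \ge 2$ is essential, as one needs a direction transverse to $e_i$ in which to ``hide'' the infinite iteration used to realize $\phi_i$ as a commutator.
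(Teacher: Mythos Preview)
The paper does not prove this theorem; it is quoted as a black box from \cite{HRT} (with more elementary accounts referenced in \cite{Mann} and the appendix of \cite{Mil}). So there is no in-paper argument to compare against. Your high-level strategy---decompose $f=f^{(1)}\cdots f^{(d)}$ into factors moving only one coordinate, then express each factor as commutators by exploiting a transverse direction---is indeed the strategy of those references.

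However, your Step~2 has a genuine gap. The telescoping identity you invoke only works when all factors are the \emph{same} diffeomorphism: if $H=\prod_{n\ge0}\phi^n g\,\phi^{-n}$ with pairwise disjoint supports, then $\phi H\phi^{-1}=\prod_{n\ge1}\phi^n g\,\phi^{-n}$ and $g=[H,\phi]$. With varying $h_n$ one gets instead $[H,\phi]=h_0\cdot\prod_{n\ge1}\phi^n(h_n h_{n-1}^{-1})\phi^{-n}$, which is not $\prod_n h_n=f^{(i)}$. The computation in Proposition~\ref{Prop:distseqint} that you cite does not recover the product of the inputs either; it recovers each $[g_n,g'_n]$ \emph{individually}. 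If instead you take $h_n=f^{(i)}$ for all $n$ so that the telescoping works, you must then show $\phi^n f^{(i)}\phi^{-n}\to\id$ in $C^\infty$, and for a $\phi$ contracting the support to a point the $C^k$ norms of the conjugates typically blow up for $k\ge2$. The cited proofs circumvent this either via the regular-Lie-group machinery of \cite{HRT} or by a parametrized version of the one-dimensional argument (this is where the $e_i$-foliation preserved by $f^{(i)}$ is actually exploited); your sketch does not supply that mechanism. A minor additional point: your $\phi_i$ should displace the support in a \emph{transverse} direction $e_j$, $j\neq i$, not in the direction $e_i$ along which $f^{(i)}$ itself moves points.
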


Combining Lemma \ref{l:frag} and Theorem \ref{t:localperfhigher}, we obtain that any diffeomorphism in $G$ sufficiently close to the identity can be written as a product of $2 \mathrm{dim}(M) \mathcal{C}(M)$ commutators of elements close to the identity which are supported in one of the balls of $\mathcal{U}$ in the case where $M \neq \mathbb{R}^d$. In the case where $M =\mathbb{R}^d$, it suffices to use Theorem \ref{t:localperfhigher} directly. This allows to prove the analog of Proposition \ref{Prop:distseqcircle} in this more general context.

The statement of Lemma \ref{Lem:smallpieces} is the same in this more general context but the proof, given below, is more involved.

The definition of the quantities $A_{C,\eta}$, $a_{C,\eta}$, the statement and the proofs of Propositions \ref{Prop:properties} and \ref{Prop:characdistortion}, and the end of the proof of Theorem \ref{Thm:conjugacyanddistortion} are exactly the same in this more general case.
 
\begin{proof}[Proof of Lemma \ref{Lem:smallpieces} in the general case]
We first treat the case where $G=\mathrm{Diff}_c^{\infty}(\mathbb{R}^d)$, with $d\geq2$. As in the one-dimensional case, Theorem 1.17 from the article \cite{MR2509711} by Burago, Ivanov and Polterovich implies that there exists $C(d)>0$ such that any diffeomorphism $f \in G$ can be written as a product of at most $C(d)$ time-one maps of compactly supported flows. This allows to conclude the proof of Lemma \ref{Lem:smallpieces} in the same way as in the one-dimensional case, with $C_0=C(d)$.

In the case of a compact manifold $M$ with dimension $d \geq 2$, we will need the following lemma

\begin{lem} \label{l:reductiongeneratingset}
Let $(f_n)_{0\leq n \leq \ell}$ be a finite sequence of diffeomorphisms in $\mathrm{Diff}^{\infty}_c(\mathbb{R}^d)$. Then there exists a finite subset $S \subset \mathrm{Diff}^{\infty}_c(\mathbb{R}^d)$, with $|S| = 4$ such that, for any $n$, the diffeomorphism $f_n$ belongs to the group generated by $S$. 
\end{lem}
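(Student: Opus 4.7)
The plan is to construct an explicit four-element set $S=\{a,b,F_1,F_2\}\subset\mathrm{Diff}^\infty_c(\R^d)$ generating a subgroup that contains all the $f_n$, via an ``encoding-and-extraction'' scheme closely modeled on the proof of Proposition~\ref{Prop:distseqint}. Since there are only finitely many $f_n$ and each has compact support, I first choose an open ball $B\subset\R^d$ containing the supports of all of them. Exploiting $d\ge 2$, I pick a compactly supported diffeomorphism $a$ such that the $\ell+1$ balls $a^n(B)$, $0\leq n\leq\ell$, are pairwise disjoint (easily arranged by having $a$ agree with a translation by some vector on a sufficiently large region), together with a compactly supported diffeomorphism $b$ whose support lies in a small neighborhood of $B$ disjoint from each $a^n(B)$ with $n\ge 1$, and satisfying $b(B)\cap B=\emptyset$.

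I then form the two ``superposition'' diffeomorphisms
$$F_1\;=\;\prod_{n=0}^{\ell}a^n f_n a^{-n}\qquad\text{and}\qquad F_2\;=\;\prod_{n=0}^{\ell}a^n\bigl(b f_n b^{-1}\bigr)a^{-n},$$
both well-defined because the $n$th piece of $F_1$ is supported in $a^n(B)$ and the $n$th piece of $F_2$ in $a^n(b(B))$, giving pairwise disjoint supports in each product. Setting $S=\{a,b,F_1,F_2\}$, I claim each $f_n$ is a word in $S$: conjugation by $a^{-n}$ ``shifts'' the $n$th slots of $F_1$ and $F_2$ into $B$ and $b(B)$ respectively; combining $a^{-n}F_1a^n$ and $a^{-n}F_2a^n$ with $b,b^{-1}$ and using that $b$ commutes with all pieces of $F_1$ and $F_2$ indexed by $m\neq n$ (by the disjoint-support choice above), one obtains, after all cancellations, a diffeomorphism supported in $B$ and equal to $f_n$.

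The main obstacle is making the extraction step yield $f_n$ itself rather than some derived quantity. In Proposition~\ref{Prop:distseqint}, the corresponding computation produces a commutator $[g_n,g'_n]$; designing $F_2$ so that the analogous manipulation yields $f_n$ directly requires carefully arranging how $b$ acts on the ``marker'' piece $b(B)$ relative to $B$, so that the telescoping between the $F_1$- and $F_2$-contributions collapses to a single copy of $f_n$. The hypothesis $d\ge 2$ is essential here: it allows simultaneous placement of the $\ell+1$ disjoint translates $a^n(B)$ of $B$ together with the displacement $b(B)$ avoiding all those translates, a geometric configuration impossible in dimension $1$ (which is precisely why the one-dimensional version of Lemma~\ref{Lem:smallpieces} must use a much larger constant $C_0=14$ or $16$ directly, without any analogous reduction).
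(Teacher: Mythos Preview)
Your proposal has a genuine gap at the crucial ``extraction'' step, which you yourself flag as the main obstacle but do not resolve. With your choices of $a,b,F_1,F_2$, the natural manipulations do \emph{not} produce $f_n$. For instance, since $b$ commutes with every piece of $a^{-n}F_1a^n$ except the $n$th, one finds
\[
(a^{-n}F_1a^n)\cdot\bigl(b\,(a^{-n}F_1a^n)\,b^{-1}\bigr)^{-1}=f_n\cdot (b f_n^{-1} b^{-1}),
\]
which is the commutator $[f_n,b]$, supported in $B\cup b(B)$, not $f_n$. All other combinations of $F_1$, $F_2$, $a$, $b$ one can write down collapse in the same way. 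In fact your $F_2$ carries no new information: letting $\beta=\prod_{m=0}^\ell a^m b a^{-m}\in\langle a,b\rangle$ (the factors have disjoint supports), one has $F_2=\beta F_1\beta^{-1}$, so $\langle a,b,F_1,F_2\rangle=\langle a,b,F_1\rangle$. The extraction you describe would thus have to produce $f_n$ from $a,b,F_1$ alone, and the commutator trick only ever outputs commutators such as $[f_n,b]$; there is no ``careful arrangement'' of $b$ that makes the unwanted piece on $b(B)$ disappear in a finite word.

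The paper circumvents this by a preliminary reduction you have omitted: using the perfection of $\mathrm{Diff}^\infty_c(\mathbb{R}^d)$ (via Theorem~\ref{t:localperfhigher} and connectedness), each $f_n$ is first written as a product of commutators, and one relabels the full list of commutators as $([g_j,h_j])_{0\le j\le k}$. One then encodes $G=\prod_j t^j g_j t^{-j}$ and $H=\prod_j t^j h_j t^{-j}$ and takes $S=\{t,t_0,G,H\}$ exactly as in Proposition~\ref{Prop:distseqint}. The extraction formula now yields $t^{j}[g_j,h_j]t^{-j}$ as an explicit word in $S$, and multiplying the appropriate $[g_j,h_j]$ recovers each $f_n$. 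The missing idea in your argument is precisely this first use of perfection.

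A minor point: your claim that $d\ge 2$ is ``essential'' for the geometric placement is incorrect. Taking $B'\supset B$ with the $a^n(B')$ pairwise disjoint and $b$ supported in $B'$ with $b(B)\cap B=\emptyset$ is perfectly possible in dimension $1$ as well; so the setup is not where the dimension enters. In the paper, $d\ge2$ is used only through Theorem~\ref{t:localperfhigher}.
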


Let us explain how to finish the proof of Lemma \ref{Lem:smallpieces} for a general compact manifold $M$ using Lemma \ref{l:reductiongeneratingset}, before concluding with the proof of Lemma \ref{l:reductiongeneratingset}. Fix a finite cover $\mathcal{U}$ of $M$ by open balls, a small real number $\eta>0$ and a diffeomorphism $f$ in $\mathrm{Diff}^{\infty}_0(M)$. By the fragmentation lemma \ref{l:frag} and the connectedness of $\mathrm{Diff}_0^{\infty}(M)$, there exists a finite sequence $(f_i)_{1 \leq i \leq \ell}$ of smooth diffeomorphisms of $M$ such that each $f_i$ is supported in one of the balls of $\mathcal{U}$ and
$$f=f_1 \circ f_2 \circ \ldots \circ f_{\ell}.$$
For each open ball $U \in \mathcal{U}$, denote by $(g_{j,U})_j$ the finite sequence consisting of the diffeomorphisms $f_i$ which are supported in the open set $\mathcal{U}$. Applying Lemma \ref{l:reductiongeneratingset} to this finite sequence yields a finite subset $S_U$ of diffeomorphisms with cardinality at most $4$ such that all the diffeomorphisms $g_{j,U}$ belong to the group generated by $S_U$. Now, as each open set $U$ in $\mathcal{U}$ is diffeomorphic to $\mathbb{R}^d$, we can apply Lemma \ref{Lem:smallpieces} to each diffeomorphism in the union of the $S_U$, with $U \in \mathcal{U}$. This gives a finite subset $S$ of $\mathrm{Diff}_0^{\infty}(M)$ with cardinality at most $4 C(d) |\mathcal{U}|$ all of whose elements are $\eta$-close to the identity and such that the diffeomorphism $f$ belongs to the group generated by $S$.
\end{proof}

\begin{proof}[Proof of Lemma \ref{l:reductiongeneratingset}]
The proof is very similar to the proof of Proposition \ref{Prop:distseqint} so we will give less details here. Fix a sequence $(f_n)_{0 \leq n \leq \ell}$ of diffeomorphisms in $\mathrm{Diff}_c^{\infty}(\mathbb{R}^d)$. By Theorem \ref{t:localperfhigher} and the connectedness of $\mathrm{Diff}_c^{\infty}(\mathbb{R}^d)$, each diffeomorphism $f_n$ can be written as a product of commutators of diffeomorphisms in $\mathrm{Diff}^{\infty}(\mathbb{R}^d)$. Denote by $([g_n,h_n])_{0 \leq n\leq k}$ the finite sequence of commutators appearing in one of those decompositions and notice that it suffices to prove Lemma \ref{l:reductiongeneratingset} for this sequence of commutators.

Take $R>0$ such that the support of all the diffeomorphisms $g_n$ and $h_n$ is contained in the open ball $B(0,R)$ of radius $R$ and take $R'>R$. Take a diffeomorphism $t$ in $\mathrm{Diff}^{\infty}_c(\mathbb{R}^d)$ such that the balls $t^{n}(B(0,R'))$, for $n \geq 0$, are pairwise disjoint. Take also a diffeomorphism $t_0$ supported in $B(0,R')$ such that $t_0(B(0,R)) \cap B(0,R)= \emptyset$. Finally, let
$$G=\prod_{i=0}^k t^i g_it^{-i}$$
and
$$H=\prod_{i=0}^k t^i h_i t^{-i}.$$
We let $S=\left\{t,t_0,G,H \right\}$ and we will prove that each commutator $[g_n,h_n]$ belongs to the group generated by $S$. For $i \geq 0$, observe that the diffeomorphism $t_i=t^it_0t^{-i}$ is supported in $t^{i}(B(0,R'))$ and displaces the ball $t^{i}(B(0,R))$. Then, for any $n \leq k$,
$$t^{n}[g_n,h_n]t^{-n}= G t_n G^{-1} t_{n}^{-1}H t_n H^{-1} t_{n}^{-1} G^{-1}H^{-1} t_n H G t_{n}^{-1},$$
which proves the lemma.
\end{proof}

\section*{Acknowledgements, fundings}

We are grateful to the organizers of the IHP trimester \emph{Group Actions and Rigidity: Around the Zimmer Program}, to the IHP itself (UAR 839 CNRS-Sorbonne Université), and to LabEx CARMIN (ANR-10-LABX-59-01), for providing the stimulating environment in which this work was initiated. We would also like to thank Andrés Navas for inspiring discussions and useful comments. The interaction between the two authors was also boosted by the ANR project \emph{GROMEOV} (2019-2025) ANR-19-CE40-0007. Supplementary funding was provided by ECOS Grant 23003.



\begin{footnotesize}

\bibliographystyle{amsalpha}
\bibliography{Biblio}
\end{footnotesize}

\end{document}